\documentclass[11pt,reqno]{amsart}
\usepackage{hyperref}
\usepackage{amsfonts,mathrsfs,bbm,latexsym,rawfonts,amsmath,amssymb,amsthm,epsfig}
\usepackage{fullpage, setspace, color}
\usepackage{cases}
\newtheorem{thm}{Theorem}[section]

\newtheorem{rem}[thm]{Remark}
\newtheorem{lem}[thm]{Lemma}
\newtheorem{prop}[thm]{Proposition}
\newtheorem{defn}[thm]{Definition}

\numberwithin{equation}{section}

\newcommand{\al}{\alpha}

\newcommand{\De}{\Delta}
\newcommand{\ep}{\varepsilon}

\newcommand{\om}{\omega}
\newcommand{\Om}{\Omega}
\newcommand{\ga}{\gamma}

\newcommand{\g}{\mathfrak{g}}

\newcommand{\U}{\mathbb{S}}

\newcommand{\Real}{\mathbb{R}}

\newcommand{\norm}[1]{\Vert#1\Vert}

\def\<{\left\langle} \def\>{\right\rangle}
\def\({\left(} \def\){\right)}
\newcommand{\n}{\nabla}
\newcommand{\p}{\partial}

\subjclass{Primary 35G61, 35Q55, 35Q60, 58J35}

\keywords{Local regular solutions, Global weak solutions, The initial-Neumann boundary value problem, The incompressible Schr\"{o}dinger flow}

\begin{document}
\title[the v-Schr\"odinger flow]{Existence of weak solutions and regular solutions to the incompressible Schr\"odinger flow}
\thanks{*Corresponding Author}
\author{Bo Chen}
\address{School of Mathematics, South China University of Technology, Guangzhou, 510640, People's Republic of China}
\email{cbmath@scut.edu.cn}

\author{Guangwu Wang}
\address{School of Mathematics and Information Sciences, Guangzhou University, 510006, People's Republic of China}
\email{yunxianwgw@163.com}
	
\author{Youde Wang*}
\address{1. School of Mathematics and Information Sciences, Guangzhou University, 510006, People's Republic of China;
2. Hua Loo-Keng Key Laboratory of Mathematics, Institute of Mathematics, AMSS, and School of
Mathematical Sciences, UCAS, Beijing 100190, People's Republic of China.}
\email{wyd@math.ac.cn}

\begin{abstract}
In this paper, we are concerned with the initial-Neumann boundary value problem of the Schr\"{o}dinger flow for maps from a smooth bounded domain in an Euclidean space into $\mathbb{S}^2$. By adopting a novel method due to B. Chen and Y.D. Wang, we prove the existence of short-time regular solutions to this flow within the framework of Sobolev spaces when the underlying space is a smooth bounded domain in $\Real^m$ with $m\leq 3$. Moreover, we also utilize the ``complex structure approximation method" to establish the global existence of weak solutions to the incompressible Schr\"{o}dinger flow in a smooth bounded domain of $\mathbb{R}^m$ (where $m\geq 1$).
\end{abstract}

\maketitle
\section{Introduction}
The goals of this paper are to investigate the existence of weak solutions and regular solutions to the initial-Neumann boundary value problem of the incompressible Schr\"{o}dinger flow:
\begin{equation}\label{eq-ISMF}
	\begin{cases}
		\p_tu+\n_vu =u\times\De u,\quad\quad&\text{(x,t)}\in\Om\times \mathbb{ R}^+,\\[1ex]
		\frac{\p u}{\p \nu}=0, &\text{(x,t)}\in\p\Om\times \mathbb{ R}^+,\\[1ex]
		u(x,0)=u_0: \Om\to \mathbb{ S}^2,
	\end{cases}
\end{equation}
where $\Om\subset \mathbb{R}^m(m\geq 1)$ is a smooth bounded domain, $u$ is a time-dependent map from $\Om$ into a standard sphere $ \mathbb{S}^2$ and $\mbox{div}(v)=0$ inside $\Om$ for any $t\in \Real^+$. In some sense, the incompressible Schr\"{o}dinger flow can be viewed as a Schr\"{o}dinger flow from a underlying manifold with a time-dependent metric, and we will describe this in Subsection 1.1. 

So called Schr\"{o}dinger flow with variable metric is just a Schr\"{o}dinger flow from Riemannian manifold family $(M, g_t)$ into a K\"ahler manifold $(N, J)$ written by
$$\p_tu =J(u)\tau_{g_t} (u),$$
where $\tau_{g_t} (u)$ is the tension field of $u$ with respect to $g_t$. Indeed, Schr\"{o}dinger flow from a underlying manifold with a time-dependent metric into $\mathbb{S}^2$ appears as the Gauss map flow of a skew mean curvature flow (also referred to as binormal curvature flow), for details we refer to \cite{CS, CS1}.

To our best knowledge, there are few literatures on the wellposedness of the initial-Neumann boundary value problem of Schr\"{o}dinger flow with a variable metric family $g(t)$ denoted by $g_t$, where $t\in\mathbb{R}$. In fact, the existence of local strong solution or regular solutions to the initial-Neumann boundary value problem of the Schr\"{o}dinger flow from a Riemannian manifold $M$ with $\dim(M)\geq 4$ and fixed metric is still a long-standing open problem (see \cite{CW1}).

\subsection{Main model and Background}
Let $\Om$ be a bounded domain in $\Real^m$ with $m\leq 3$. For a time-dependent map $u$ from $\Om$ into $\U^2$, the well-known Landau-Lifshitz (LL) equation
\begin{equation}\label{eq-LL}
\p_t u=-u\times \De u	
\end{equation}
was initially proposed by Landau and Lifshitz\cite{LL} in 1935 as a phenomenological model for investigating the dispersive theory of magnetization in ferromagnets. Subsequently, in 1955, Gilbert\cite{G} introduced a modified version of the Landau-Lifshitz equation by incorporating with a dissipative term, which is now widely referred to as the Landau-Lifshitz-Gilbert equation. This equation is given by
\begin{equation*}
	\p_tu + \alpha u\times \Delta u + \beta u\times (u\times \Delta u)=0,
\end{equation*}
where $\beta$ is a real number and $\al \geq 0$ is called the Gilbert damping coefficient. Here ``$\times$" denotes the cross product in $\Real^{3}$ and $\De$ is the Laplace operator in $\Real^{3}$.

Let $v:\Om\times \Real^+\to \Real^m$ be a vector field, which satisfies $\mbox{div}(v)=0$ inside $\Om$.  For any constant $\ga\neq 0$, the following equation is called as the incompressible Schr\"odinger flow (or the incompressible LL equation):
\begin{equation}\label{eq-ILL}
\p_tu+\ga\n_vu =-u\times\De u.	
\end{equation}
This equation was derived by Chern et al \cite{Chern} as a model for the purely Eulerian simulation of incompressible fluids.

In the case of the vector field $v$ represents the velocity field in a magnetic fluid which satisfies a Navier-Stokes equation that includes a magnetic term, we can derive the so-called the Navier-Stokes-Schr\"odinger flow
\begin{equation}\label{eq-NSLL}
	\begin{cases}
		\p_t v+\n_v v+\n P=\mu\De v-\nabla\cdot(\nabla u\odot\nabla u),\\[1ex]
		\mbox{div}(v)=0,\\[1ex]
		\p_t u+\ga \n_v u=-u\times \De u.
	\end{cases}
\end{equation}
Here $\mu$ is a constant, $u: \Om^m\times \Real^+\to \U^2$ is the magnetization field, $v: \Om^m\times \Real^+\to \Real^m$ is the velocity field of the fluid and $P$ is the pressure function, where $\Om^m$ is a domain in $\Real^m$ with $m=2, 3$. The term $\n u\odot\n u$ is a $m\times m$ matrix with $(i,j)$-th entry
\[(\n u\odot\n u)_{ij}=\<\n_i u, \n_ju\>.\]		
This flow can be utilized to model the dispersive theory of magnetization in ferromagnets when one takes into account quantum effects.

If the vector field $v$ additionally satisfies $\ga\<v,\nu\>|_{\p \Om}=0$, where $\nu$ is the outward unit normal vector on the boundary $\p\Om$, it is worthy to point out that the incompressible LL equation \eqref{eq-ILL} is gauge equivalent to LL equation \eqref{eq-LL}. Indeed, let $\phi_t: \Om \to \Om$  be a family of diffeomorphisms of $\Om$ generated by $\ga v$, which preserves the volume element.  Namely, $\phi_t$ is the solution to the following ordinary differential equation (ODE)
\begin{equation}\label{ODE}
	\begin{cases}
		\frac{\p \phi}{\p t}=\ga v(\phi_t(x),t),\\[1ex]
		\phi(\cdot, 0)=\phi_0,
	\end{cases}
\end{equation}
where $\phi_0: \Om\to \Om$ is a given diffeomorphism. Let $u$ solve \eqref{eq-ILL}, and set $\tilde{u}(x,t)=u(\phi_t(x), t)$. Then we have
\[\p_t\tilde{u}=(\p_t u+\ga\n_v u)\circ\phi_t(x)=\phi_t^*(-u\times \De u)=-\tilde{u}\times \De_{g_t}\tilde{u},\]
where $\De_{g_t}$ is the Laplace operator induced by the the pull-back metric $g_t=\phi_t^*g$. This is the standard LL equation \eqref{eq-LL} with respect to the pull-back metric $g_t$. So, the incompressible LL equation \eqref{eq-ILL} can be regarded as a Schr\"odinger flow with time-dependent domain metric. 

\medskip
Now, let us review some relevant previous results in this field. In the last five decades, there has been significant advancement in the study of well-posedness for both weak and regular solutions of LL-type equations and the Schr\"odinger flow.

In 1985, Visintin \cite{V} established the existence of weak solutions to the LLG equation with magnetostrictive effects. Subsequently, in 1986, P. L. Sulem, C. Sulem, and C. Bardos \cite{SSB} utilized difference methods to prove the global existence of weak solutions and locally smooth solutions for the LL equation without a dissipation term (referred to as the Schr\"{o}dinger flow for maps into $ \mathbb{  S}^2$) defined on $ \mathbb{R}^n$. In 1992, Alouges and Soyeur \cite{AS} demonstrated a non-uniqueness result for weak solutions to the LLG equation with an initial-Neumann boundary condition, considering the unit ball $\Omega$ in $ \mathbb{R}^3$. In 1993, B.L. Guo and M.C. Hong \cite{GH} employed methods used for studying harmonic maps to establish the global existence and uniqueness of partially regular weak solutions for LLG equation. In 1998, Y.D. Wang \cite{W} demonstrated the existence of weak solutions to the Cauchy problem of the Schr\"{o}dinger flow (i.e. LL equation) for maps from an $n$-dimensional Euclidean domain $\Omega$ or a closed $n$-dimensional Riemannian manifold $M$ into a 2-dimensional unit sphere $ \mathbb{S}^2$, which largely improved the work \cite{SSB}.  Z.L. Jia and Y.D. Wang \cite{JW1,JW2} employed a method inspired by \cite{DWW,W} to achieve global weak solutions for a wide class of generalized Schr\"{o}dinger flows in a more general setting, where the base manifold is a bounded domain $ \mathbb{R}^n$ (where $n\geq 2$) or a compact Riemannian manifold $\mathbb{M}^n$, and the target space is $\mathbb{S}^2$ or the unit sphere $ \mathbb{S}_{\g}^n$ in a compact Lie algebra $\g$. Recently, B. Chen and Y.D. Wang \cite{CW0} improved the methods proposed by Wang \cite{W} to establish the global existence of weak solutions for the Landau-Lifshitz flows and heat flows associated with the micromagnetic functional, considering the initial-Neumann boundary condition.

The local existence and uniqueness of regular solutions or smooth solutions for the Schr\"odinger flow for maps from a closed Riemannian manifold or an Euclidean space into a complete K\"ahler manifold was demonstrated by W.Y. Ding and Y.D. Wang in \cite{DW,DW1}. For initial data with low regularity, the Schr\"{o}dinger flow from Euclidean space into a Riemann surface $X$ has been indirectly studied using the ``modified Schr\"{o}dinger map equations" and enhanced energy methods. For instance, A.R. Nahmod, A. Stefanov, and K. Uhlenbeck \cite{NSU} employed Picard iteration in suitable function spaces of the Schr\"{o}dinger equation to obtain a near-optimal (but conditional) local well-posedness result for the Schr\"{o}dinger map flow for maps from two dimensions into the standard sphere $X=\mathbb{S}^2$ or hyperbolic space $X=\mathbb{H}^2$. The resolution of the well-posedness hinges on the consideration of truly quatrilinear forms of weighted $L^2$-functions.

For the global existence in one dimension of the Schr\"{o}dinger flow from $\mathbb{S}^1$ or $\mathbb{R}^1$ into a K\"{a}hler manifold, references \cite{CSU,PWW,RRS,ZGT} and a recent preprint \cite{WZ} provide further details. The global well-posedness result for the Schr\"{o}dinger flow from $\mathbb{R}^n$ (where $n\geq3$) into $\mathbb{S}^2$ in critical Besov spaces was proven by Ionescu and Kenig in \cite{IK}, independently by Bejenaru in \cite{B1}, and later improved to global regularity for small data in critical Sobolev spaces for dimensions $n\geq 4$ in \cite{BIK}. The global well-posedness result for small data in critical Sobolev spaces in dimensions $n\geq 2$ was addressed in \cite{BIKT}. Recently, Z. Li in \cite{L1,L2} proved global results for the Schr\"{o}dinger flow from $\mathbb{R}^n$ (where $n\geq 2$) to compact K\"{a}hler manifolds with small initial data in critical Sobolev spaces.

F. Merle, P. Rapha\"{e}l, and I. Rodnianski \cite{MRR} investigated the energy critical Schr\"{o}dinger flow problem with a 2-sphere target for equivariant initial data of homotopy index $k = 1$. They established the existence of a codimension one set of well-localized smooth initial data arbitrarily close to the ground state harmonic map in the energy critical norm, leading to finite-time blowup solutions. They provided a sharp description of the corresponding singularity formation, which occurs through the concentration of a universal bubble of energy. Additionally, self-similar solutions to the Schr\"{o}dinger flow from $\mathbb{C}^n$ into $\mathbb{C}P^n$ with locally bounded energy that blow up at finite time were found in \cite{DTZ,GSZ}. Very recently, G.W. Wang and B.L. Guo \cite{WG} established a blowup criterion for the strong solution to the multi-dimensional Landau-Lifshitz-Gilbert equation.

Regarding traveling wave solutions with vortex structures, F. Lin and J. Wei \cite{LW} employed perturbation methods to consider such solutions for the Schr\"{o}dinger map flow equation with an easy-axis assumption. They demonstrated the existence of smooth traveling waves with bounded energy if the velocity of the traveling wave is sufficiently small. Moreover, they showed that the traveling wave solution possesses exactly two vortices. Later, J. Wei and J. Yang \cite{WY} considered the same Schr\"{o}dinger map flow equation as in \cite{LW}, which corresponds to the Landau-Lifshitz equation describing planar ferromagnets. They constructed a traveling wave solution with vortex helix structures for this equation and provided a complete characterization of the solution's asymptotic behavior using perturbation techniques.


On the other hand, the Landau-Lifshitz-Gilbert system with Neumann boundary conditions has garnered significant attention from both physicists and mathematicians. In 2001, Carbou and Fabrie established local existence of regular solutions for the LLG equation on bounded domains in $ \mathbb{R}^n$ (where $n\leq 3$) in \cite{CF}. Later, Carbou and Jizzini \cite{CJ} studied a model of ferromagnetic material subjected to an electric current and proved the local existence in time of very regular solutions for this model in Sobolev spaces. They also described in detail the compatibility conditions at the boundary for the initial data. Inspired by \cite{CJ}, B. Chen and Y.D. Wang \cite{CW,CW2}obtained the existence of locally very regular solution for LLG equation with spin-polarized transport, as well as for the Schr\"odinger flow with damping term for maps from a 3-dimensional manifold with boundary into a compact symplectic manifold, considering the Neumann boundary conditions.   Very recently, B. Chen and Y.D. Wang \cite{CW1,CW3} established the existence and uniqueness of local regular solutions (or local smooth solutions) for the challenging initial-Neumann boundary value problem of the Schr\"{o}dinger flow from a smooth bounded domain $\Om$ in $\mathbb{R}^3$ into $\mathbb{S}^2$:
\begin{equation}\label{eq-SMF}
\begin{cases}
\p_tu=u\times\De u,\quad\quad&\text{(x,t)}\in\Om\times \mathbb{ R}^+,\\[1ex]
\frac{\p u}{\p \nu}=0, &\text{(x,t)}\in\p\Om\times \mathbb{ R}^+,\\[1ex]
u(x,0)=u_0: \Om\to \mathbb{S}^2.
\end{cases}
\end{equation}

A natural question $Q_0$ arises: can we generalize our prior results in \cite{CW1,CW3} to address the initial-Neumann boundary value problem of the following Schr\"odinger flow governed by a time-dependent metric: 
	\[\p_t u=u\times \De_{g_t}u?\]
Here $u: (\Om, g_t)\to \mathbb{S}^2$, and $g_t$ is a variable metric family. This problem is intimately connected to the free boundary problem associated with skew mean curvature flow. However, tackling this problem necessitates navigating novel and inherent challenges stemming from the time-dependent metric $g_t$.

In the present paper, we provide a positive answer to problem $Q_0$ when $g_t$ exhibits self-similarity and is induced by a vector field $v: \Om\times\Real^+\to \Real^3$ satisfying the compatibility boundary condition $\<v,\nu\>|_{\p\Om}=0$. More precisely, $g_t=\phi_t^*g$ where $\phi_t$ solves \eqref{ODE} and $g$ is a fixed metric on $\Om$. Additionally, if $v$ satisfies the divergence-free condition, this special case of Schr\"odinger flow reduces to the incompressible Schr\"odinger flow.

By imposing appropriate regularity assumptions on the vector field $v$, we get the existence of global weak solutions and local regular solutions to the initial-Neumann boundary value problem to the incompressible Schr\"{o}dinger flow:
\begin{equation*}
\begin{cases}
\p_tu+\n_vu =u\times\De u,\quad\quad&\text{(x,t)}\in\Om\times \mathbb{ R}^+,\\[1ex]
\mbox{div}(v)=0, &\text{(x,t)}\in\Om\times \mathbb{R}^+,\\[1ex]
\frac{\p u}{\p \nu}=0, &\text{(x,t)}\in\p\Om\times \mathbb{R}^+,\\[1ex]
u(x,0)=u_0: \Om\to \mathbb{S}^2.
\end{cases}
\end{equation*}
 Our main results can be summarized as follows.
\subsection{Global weak solutions}
To state our first result on global well-posedness of the weak solutions to the incompressible Schr\"odinger flow \eqref{eq-ISMF}, we need to give the definitions of the weak solutions.

\begin{defn}[Weak solution]\label{def2}
Let $\Omega$ be a bounded smooth domain in $ \mathbb{R}^m$. Suppose that $v\in  L^2(\Real^+,L^\infty(\Omega))$, $\n v\in L^1(\Real^+, L^\infty(\Om))$, $u_0\in H^1(\Omega)$, $|u_0|=1$ a.e. in $\Omega$.  We say that $u\in L^\infty([0,T],H^1(\Omega))$ with $\p_tu\in L^{2}([0,T],H^{-1}(\Omega))$ is a weak solution to the incompressible Schr\"odinger flow \eqref{eq-ISMF} with initial data $u_0$ if $u$ satisfies that, for any $\varphi\in C^\infty(\bar{\Omega}\times[0,T])$,
\begin{align*}
   &\int_\Omega \left\langle u, \varphi\right\rangle dx(T)-\int_\Omega \left\langle u_0, \varphi \right\rangle dx(0)-\int_0^T\int_\Omega u\frac{\partial\varphi}{\partial t}+\int_0^T\int_\Omega \left\langle v\cdot \nabla u, \varphi\right\rangle  dxdt\\
    &+\int_0^T\int_\Omega \left\langle u\times \nabla u, \nabla \varphi\right\rangle dxdt=0,
\end{align*}
where $\bar{\Om}$ is the closure of $\Om$, and $u(x,t)\to u_0$ as $t \to 0$ in the space $C^0([0,T], L^2(\Om))$.
\end{defn}
\begin{thm}\label{th2}
Let $\Omega$ be a bounded smooth domain in $\mathbb{R}^m(m\geq 1)$. Suppose that $u_0\in H^1(\Omega, \mathbb{S}^2)$, $v\in  L^2(\Real^+,L^\infty(\Omega))$, $\n v\in L^1(\Real^+, L^\infty(\Om))$, $\textnormal{\mbox{div}}(v)=0$ for any $t\in\Real^+$ and $\<v,\nu\>|_{\p\Om\times \Real^+}=0$. Then, the incompressible Schr\"odinger flow \eqref{eq-ISMF} admits a global weak solution $u$ with initial data $u_0$ and $|u|=1$ for a.e. $(x,t)\in \Omega\times\Real^+$, which satisfies the following inequality
\begin{equation}\label{en-es}
\sup_{0\leq t\leq T} \|u\|_{H^1(\Omega)}^2
 \leq \exp\(2\int_0^T\|\nabla v\|_{L^\infty(\Omega)}(s)ds\)\int_\Omega |\nabla u_0|^2dx+\int_\Omega|u_0|^2 dx,
 \end{equation}
for any $0<T<\infty$.
\end{thm}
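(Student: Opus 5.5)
We follow the ``complex structure approximation method'': we regularize the Schr\"odinger flow by a parabolic (Landau--Lifshitz--Gilbert type) flow, obtain estimates that do not depend on the damping parameter, and pass to the limit. For $\ep>0$ we consider
\begin{equation*}
\begin{cases}
\p_t u^\ep+\n_v u^\ep=\ep\De u^\ep+\ep|\n u^\ep|^2u^\ep+u^\ep\times\De u^\ep, & \text{in }\Om\times\Real^+,\\[1ex]
\frac{\p u^\ep}{\p\nu}=0, & \text{on }\p\Om\times\Real^+,\\[1ex]
u^\ep(\cdot,0)=u_0.
\end{cases}
\end{equation*}
The right-hand side is $J_\ep(u)\tau(u)$ with $J_\ep=\ep-u\times$ up to normalization, so it is a parabolic perturbation of the complex structure. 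For this system we first produce a global weak solution with $|u^\ep|=1$. We do this by a Galerkin scheme using Neumann eigenfunctions of $\De$ on a Ginzburg--Landau penalized version, letting the penalty tend to zero; alternatively one can use the existence theory of \cite{CW0} adapted to the transport term. The sphere constraint $|u^\ep|=1$ is preserved: $|u^\ep|^2-1$ satisfies a linear parabolic transport equation with Neumann data and zero initial value. The term $\n_v$ is harmless at this stage because $v\in L^2L^\infty$.

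The key step is the $\ep$-independent energy estimate \eqref{en-es}. We test the equation with $-\De u^\ep$ and integrate by parts using the Neumann condition. The term $\<u\times\De u,\De u\>$ vanishes. The terms $\ep\De u+\ep|\n u|^2u$ give $-\ep\int|\De u+|\n u|^2u|^2\le 0$, using $|u|=1$. The transport term gives
\[
-\int_\Om\<\n_v u,\De u\>dx=\int_\Om \p_jv^i\<\p_iu,\p_ju\>dx+\frac12\int_\Om v^i\p_i|\n u|^2dx .
\]
The last integral equals $-\frac12\int \mbox{div}(v)|\n u|^2+\frac12\int_{\p\Om}\<v,\nu\>|\n u|^2=0$, by $\mbox{div}\, v=0$ and $\<v,\nu\>|_{\p\Om}=0$. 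Hence
\[
\frac{d}{dt}\int_\Om|\n u^\ep|^2dx\le 2\|\n v\|_{L^\infty}\int_\Om|\n u^\ep|^2dx,
\]
and Gronwall gives $\int|\n u^\ep|^2(t)\le \exp(2\int_0^t\|\n v\|_{L^\infty})\int|\n u_0|^2$. Combined with $|u^\ep|=1$, which gives $\|u^\ep\|_{L^2}^2=\|u_0\|_{L^2}^2$, this yields \eqref{en-es} for $u^\ep$.

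This computation needs enough regularity to test with $\De u$. We would carry it out at the Galerkin or penalized level, where the solutions are smooth in space. The boundary integral is legitimate there because the Neumann eigenfunctions satisfy $\p_\nu=0$. I expect this justification, namely making the cancellation of the transport term survive the approximation, to be the main technical obstacle. Projection onto finitely many modes destroys the exact cancellation, so it may be cleaner to smooth $v$ instead and work with smooth solutions of the parabolic regularized problem.

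For the time derivative we write the equation in divergence form, $u\times\De u=\mbox{div}(u\times\n u)$. Then $\p_tu^\ep$ is bounded in $L^2([0,T],H^{-1})$: the transport term is in $L^2L^2$ because $v\in L^2L^\infty$, the term $u\times\n u$ is in $L^\infty L^2$, and the $\ep$-terms are controlled by the dissipation $\ep\int\!\!\int|\De u+|\n u|^2u|^2\le C$ (with $|\n u|^2u\in L^1$, handled by testing against smooth $\varphi$). By Aubin--Lions, $u^\ep\to u$ strongly in $L^2(\Om\times[0,T])$ and in $C^0([0,T],L^2)$ after extracting a subsequence, and $\n u^\ep\rightharpoonup\n u$ weakly-$*$. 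Thus $|u|=1$ a.e., and $u\times\n u^\ep\rightharpoonup u\times\n u$ (strong times weak). The $\ep$-terms vanish against test functions, since $\ep\int\n u\n\varphi\to0$ and $\ep\int|\n u|^2\<u,\varphi\>\to 0$. This yields the identity of Definition \ref{def2} for every $T$. Lower semicontinuity preserves \eqref{en-es}, and a diagonal argument in $T$ gives the global weak solution.
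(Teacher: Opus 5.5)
\textbf{There is a genuine gap: no approximation scheme in which the key estimate is actually justified.}

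Your formal computations are right. For a solution of your LLG-type system with $|u^\ep|=1$, testing with $-\De u^\ep$ gives $-\ep\int_\Om|\tau(u^\ep)|^2dx$ from the damping. The transport term reduces to $\int_\Om\p_jv^i\langle\p_iu^\ep,\p_ju^\ep\rangle dx$. Your passage $\ep\to0$ is also fine. The gap is exactly the step you flag and leave open: you never construct approximate solutions on which this estimate (and $|u^\ep|=1$) holds rigorously, and none of your suggested routes delivers it.

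\begin{itemize}
\item The identity $\p_t\rho+v\cdot\n\rho=\ep\De\rho+2\ep|\n u^\ep|^2\rho$ for $\rho=|u^\ep|^2-1$ forces $\rho\equiv0$ only for regular solutions. For weak solutions the coefficient $|\n u^\ep|^2$ is merely integrable, so the argument does not apply.
\item In a Galerkin scheme for the Ginzburg--Landau penalized system (penalty $\frac{\ep}{\delta}(1-|u|^2)u$), testing with $P_n(\De u_n+\frac1\delta(1-|u_n|^2)u_n)$ leaves the non-vanishing term $\frac1\delta\int_\Om\langle u_n\times\De u_n,P_n((1-|u_n|^2)u_n)\rangle dx$. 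Testing with $-\De u_n$ alone instead costs a Gronwall factor of order $e^{2\ep t/\delta}$. Neither is uniform as $\delta\to0$.
\item Smoothing $v$ does not help, because global smooth solutions of these quasilinear parabolic systems are not available in arbitrary dimension $m$.
\item A pointer to an existence theory ``adapted to the transport term'' does not close the gap either. The whole difficulty is obtaining the $\ep$-uniform bound at the approximate level.
\end{itemize}

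\textbf{The missing idea, as in the paper: do not impose the sphere constraint during the approximation.} The paper solves $\p_tu+\n_vu=\ep\De u+J(u)\times\De u$ with the truncated structure $J(u)=u/\max\{1,|u|\}$, using Galerkin approximation in the Neumann eigenfunctions of $\De-I$. Three facts make this work.
\begin{itemize}
\item Since $u_n$ and $\De u_n$ lie in $H_n$, the projection drops out when testing with them. Contrary to your worry, no cancellation is lost.
\item $J(u_n)\times\De u_n$ is pointwise orthogonal to both $u_n$ and $\De u_n$.
\item Each $u_n$ is smooth with $\p_\nu u_n=0$, so the transport term is handled exactly as in your computation.
\end{itemize}
This yields $\|u_n\|_{L^2}\le\|u_0\|_{L^2}$, so the Galerkin ODE is global. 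It also yields $\int_\Om|\n u_n|^2(t)dx\le \exp(2\int_0^t\|\n v\|_{L^\infty}ds)\int_\Om|\n u_0|^2dx$ and $\ep\int_0^T\!\int_\Om|\De u_n|^2\le C$, all uniformly in $n$ and $\ep$.

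The limit $u^\ep$ as $n\to\infty$ is then a strong solution, with $u^\ep\in L^2H^2$ and $\p_tu^\ep\in L^2L^2$. Testing with $(|u^\ep|-1)^+u^\ep/|u^\ep|$ gives a maximum principle $|u^\ep|\le1$; the transport contribution vanishes again by $\mbox{div}\,v=0$ and $\langle v,\nu\rangle=0$. Hence $J(u^\ep)=u^\ep$, and the divergence structure $u^\ep\times\De u^\ep=\mbox{div}(u^\ep\times\n u^\ep)$ becomes available.

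The constraint $|u|=1$ is recovered only at the end. It follows from $\int_\Om|u^\ep|^2(t)dx+2\ep\int_0^t\!\int_\Om|\n u^\ep|^2=|\Om|$ together with $|u|\le1$. With this approximation in place of yours, your $\ep\to0$ argument goes through essentially verbatim.
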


\begin{rem}
It is not difficult that we can also obtain the same results as in the above Theorem \ref{th2} if the domain $\Omega$ in Theorem \ref{th2} is replaced by a closed Riemannian manifold (for instance, a flat torus in $ \mathbb{  R}^n$).
\end{rem}

Theorem \ref{th2} is proved by using the complex structure approximation method originally from \cite{W}. Indeed,  for any $u\in \U^2$, $u\times:T_u\U^2\to T_u\U^2$ can be interpreted as a complex structure on $\U^2$, which rotates vectors in the tangent space of $\U^2$ by  $\frac{\pi}{2}$ degrees counterclockwise. This complex structure leads to  the following two important properties for equation \eqref{eq-ISMF}
\begin{itemize}
	\item[$(1)$] A priori estimate: If the initial data  $u_0\in H^1(\Omega, \mathbb{S}^2)$, $v\in L^1(\Real^+,W^{1,\infty}(\Omega))$, $\textnormal{\mbox{div}}(v)=0$ for any $t\in\Real^+$ and $\<v,\nu\>|_{\p\Om\times \Real^+}=0$, then the a priori estimate \eqref{en-es} holds true;
	\item[$(2)$] Divergence structure: The equation $u\times \De u=\mbox{div}(u\times \n u)$ holds, which reflects the divergence structure of the equation.
\end{itemize}
The above two properties play a crucial role on obtaining weak solutions to \eqref{eq-ISMF}. Hence, we consider the following approximation of the complex structure $u\times$:
\[J(u)=\frac{u}{\max\{|u|, 1\}},\]
and the corresponding approximation equation of \eqref{eq-ISMF}:
\begin{equation}\label{a-eq}
	\begin{cases}
		\p_t u+\n_vu=\ep \De u+J(u)\times \De u,\quad\quad&\text{(x,t)}\in\Om\times \mathbb{ R}^+,\\[1ex]
		\frac{\p u}{\p \nu}=0, &\text{(x,t)}\in\p\Om\times \mathbb{ R}^+,\\[1ex]
		u(x,0)=u_0: \Om\to \mathbb{ S}^2.
	\end{cases}
\end{equation}
It is noted that this equation exhibits a similar a priori estimate as mentioned in property (1). Moreover, once
we can show $|u|\leq 1$, this auxiliary equation also exhibits the same divergence structure as stated in property (2), namely
\[J(u)\times \De u=u\times \De u=\mbox{div}(u\times \n u).\]

Consequently, Theorem \ref{th2} can be established by demonstrating a uniform energy estimate (independent of $\ep$) for the approximation solution $u^\ep$ to \eqref{a-eq}, and taking a convergence argument to show that $u^\ep$ converges to a weak solution to \eqref{eq-ISMF} which satisfies the a priori estimate \eqref{en-es}.
\medskip
\subsection{Local regular solutions} Our second result is the existence of local regular solutions to \eqref{eq-ISMF}, which are the main conclusions of the present paper.
\begin{thm}\label{mth2}
Let $\Om$ be a smooth bounded domain in $\Real^m$ where $m\leq 3$. Let $u_0\in H^{3}(\Om)$ satisfy the compatibility condition:
	\[\frac{\p u_0}{\p \nu}|_{\p \Om}=0.\]
Suppose that $v\in L^\infty(\Real^+,W^{1,3}(\Om))\cap C^0(\Real^+, H^1(\Om))\cap L^4(\Real^+, L^\infty(\Om))$, $\n v\in L^2(\Real^+, L^\infty(\Om))$, $\p_t v\in L^2(\Real^+, H^1(\Om))$, $\textnormal{\mbox{div}}(v)=0$ inside $\Om$ for any $t\in\Real^+$ and $\<v,\nu\>|_{\p\Om\times \Real^+}=0$. Then there exists constants $T_0$  and $C(T_0)$ depending only on $\norm{u_0}_{H^3}$, $\norm{v}_{L^\infty(\Real^+,W^{1,3})}$ and the $L^1$-norm of $f(t)=\norm{\p_tv}^2_{H^1}+\norm{v}^4_{L^\infty}+\norm{\n v}^2_{L^\infty}$, such that the problem \eqref{eq-ISMF} admits a local solution $u\in L^\infty([0,T_0],H^3(\Om, \U^2))$, which satisfies
\begin{equation}\label{es-mth1}
\sup_{0\leq t\leq T_0}\(\norm{u}^2_{H^3(\Om)}+\norm{\p_tu}^2_{H^1(\Om)}\)\leq C(T_0).
\end{equation}
\end{thm}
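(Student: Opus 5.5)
We will prove Theorem \ref{mth2} by a parabolic regularization combined with uniform higher-order energy estimates, in the spirit of the method of \cite{CW1,CW3}. For $\ep\in(0,1)$ we consider the approximating problem
\[
\p_t u+\n_v u=\ep\De u+u\times\De u-\ep|\n u|^2u\ \ (\text{equivalently } \ep\,\tau(u)+u\times\De u),\qquad \frac{\p u}{\p\nu}\Big|_{\p\Om}=0,\qquad u(0)=u_0,
\]
i.e.\ the incompressible Landau--Lifshitz--Gilbert flow. The map $u$ is $\U^2$-valued because the right-hand side is tangent to the sphere and $\n_v$ is a transport operator. By the Galerkin/contraction argument used for LLG with Neumann data (cf.\ \cite{CF,CJ,CW}), together with the compatibility condition $\p_\nu u_0=0$, it has a local solution $u^\ep\in L^\infty H^3\cap L^2H^4$. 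The whole task is then to obtain an estimate on $[0,T_0]$ that is uniform in $\ep$:
\[
E(t)=\norm{u}^2_{H^1}+\norm{\p_t u}^2_{H^1}+\norm{\n\De u}^2_{L^2}.
\]
After that we pass to the limit $\ep\to0$ by Aubin--Lions compactness, using the weak-* limit in $L^\infty H^3$ to obtain \eqref{es-mth1}.

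\textbf{Step 1: energy level $H^1$.} We test with $-\De u$. Integrating by parts and using $\mbox{div}\,v=0$ and $\<v,\nu\>=0$ gives
\[
\int\<\n_v u,\De u\>=-\int \p_iv^j\<\p_ju,\p_iu\>.
\]
Hence $\frac{d}{dt}\norm{\n u}^2\le 2\norm{\n v}_{L^\infty}\norm{\n u}^2$, exactly as in \eqref{en-es}.

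\textbf{Step 2: the key quantity $w=\p_t u$.} We do not differentiate the equation spatially, since this would generate uncontrollable boundary terms. Instead, as in \cite{CW1}, we differentiate in time. The function $w$ satisfies $\p_\nu w=0$ and
\[
\p_t w+\n_v w+\n_{\p_tv}u=\ep\De w+u\times\De w+w\times\De u+\ep(\text{l.o.t.}).
\]
We test this with $w-\De w$. The worst term is $\int\<w\times\De u,\De w\>$. We handle it by writing $\De u=-u\times(\p_tu+\n_vu)+(\ep\text{-terms})$ from the equation, which is available because $u\perp \De u+|\n u|^2u$. This turns it into expressions cubic in $w$ with at most one factor $\De w$, and these can be absorbed using the $\U^2$-geometry, e.g.\ $\<w\times(u\times w),\De w\>$. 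The transport terms give $\int\p_iv^j\<\p_jw,\p_iw\>\le\norm{\n v}_{L^\infty}\norm{\n w}^2$. The forcing term $\n_{\p_tv}u$ is bounded by $\norm{\p_tv}_{H^1}\norm{\n u}_{L^\infty}\norm{w}_{H^1}$, roughly, using $\n u\in H^2\subset L^\infty$ for $m\le3$. This is where $f(t)$ enters.

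\textbf{Step 3: elliptic recovery of $H^3$.} From the equation, $u\times\De u=w+\n_vu-\ep\De u$. Combining this with $\<u,\De u\>=-|\n u|^2$ gives
\[
\De u=-u\times(w+\n_vu)-|\n u|^2u+O(\ep\De u).
\]
Neumann elliptic regularity on the smooth domain then gives
\[
\norm{u}_{H^3}\lesssim\norm{\De u}_{H^1}+\norm{u}_{L^2}\le C\big(\norm{w}_{H^1}+\norm{v}_{W^{1,3}}\norm{u}_{H^2}+\norm{u}^3_{H^2}\big)+\ldots
\]
Here we use $\norm{\n(v\cdot\n u)}_{L^2}\lesssim\norm{\n v}_{L^3}\norm{\n u}_{L^6}+\norm{v}_{L^\infty}\norm{\n^2u}_{L^2}$. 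The $L^\infty$ norm of $v$ is where the $L^4$ assumption enters via Young's inequality. Interpolation, $\norm{u}_{H^2}\le\norm{u}_{H^1}^{1/2}\norm{u}_{H^3}^{1/2}$, closes this step. Combining Steps 1--3 we expect a differential inequality
\[
E'(t)\le C\,(1+f(t))\,(1+E(t))^{k}.
\]
Since $f\in L^1$, this gives a uniform bound on some $[0,T_0]$ depending only on the quantities listed in the statement. The initial value $E(0)$ is controlled by $\norm{u_0}_{H^3}$ and $\norm{v(0)}_{H^1}$, through $w(0)=u_0\times\De u_0-\n_{v(0)}u_0$.

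\textbf{Main obstacle.} The main obstacle is Step 2: closing the $\norm{\n w}^2$ estimate without losing a derivative, i.e.\ controlling $\int\<w\times\De u,\De w\>$ and the boundary contributions uniformly in $\ep$. We would first try the substitution of $\De u$ from the equation described above. If that leaves a term like $\int|w|^2|\n w|^2$, it is controlled by $\norm{w}_{L^\infty}\lesssim\norm{w}_{H^1}^{1/2}\norm{w}_{H^2}^{1/2}$, which is not allowed since $w$ is not uniformly in $H^2$. In that case we instead use $\norm{w}_{L^3}\norm{\n w}_{L^6}$-type bounds together with an $\ep$-independent identity for $\int\<u\times\De w,\De w\>=0$, keeping everything at the level $E$.
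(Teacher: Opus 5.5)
Your architecture matches the paper's: regularize parabolically, differentiate in time, test the equation for $w=\p_tu$ (which inherits $\p_\nu w=0$) with $-\De w$, remove $\int\<w\times\De u,\De w\>$ by substituting $\De u$ from the equation, and recover $H^3$ from $\norm{w}_{H^1}$ by Neumann elliptic regularity. Your regularization $\p_tu+\n_vu=\ep\tau(u)+u\times\De u$ differs from the paper's $\p_tu=\ep\tau_v(u)+u\times\tau_v(u)$ only by the term $\ep u\times\n_vu$, and that difference is harmless.

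\textbf{The gap is in how the estimate closes.} Your $E$ contains $\norm{\n\De u}^2_{L^2}$, but you never estimate its time derivative, which would involve $\n\De w$. In effect you propagate only $\norm{u}_{H^1}$ and $\norm{w}_{H^1}$, and ask Step 3 to bound $\norm{u}_{H^3}$ by them. It cannot. Step 3 gives $\norm{u}_{H^3}\lesssim\norm{w}_{H^1}+\dots+\norm{u}^3_{H^2}$, from the terms $|\n u|^3$ and $|\n u||\n^2u|$. Your interpolation turns $\norm{u}^3_{H^2}$ into $\norm{u}^{3/2}_{H^1}\norm{u}^{3/2}_{H^3}$, which is superlinear in $\norm{u}_{H^3}$, so nothing is absorbed.

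No sharper inequality fixes this, because an $H^1$ bound does not exclude bubbling. Take $m=2$ and $v=0$. Glue the inverse stereographic image of $\lambda z$ on a disc $|z|<\delta$ to a constant across an annulus, cutting off in a stereographic chart centred at its value at infinity. These maps are constant near $\p\Om$ and have bounded energy. Their tension, hence $w=\ep\tau(u)+u\times\tau(u)$, is $O(\lambda^{-1})$ in $H^1$, yet $\norm{u}_{H^3}\sim\lambda^2$. For $m=3$, extend this independently of $x_3$ in a domain whose boundary is flat near the $x_3$-axis. So no bound $\norm{u}_{H^3}\le\Phi(\norm{u}_{H^1},\norm{w}_{H^1})$ exists.

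\textbf{What is missing is an $H^2$-level quantity in the Gronwall functional.} The paper uses $G=(1+\ep^2)\norm{u}^2_{H^2}+\norm{\p_tu}^2_{H^1}+1$. It tests its fourth-order identity for $\p_t\p_tu$ with $\p_tu$, which uses $\p_\nu\tau_v(u)=0$ as well as $\p_\nu\p_tu=0$. This yields an evolution inequality for $\frac12\norm{\p_tu}^2_{L^2}+\frac{1+\ep^2}{2}\norm{\De u}^2_{L^2}$. Then $\norm{u}^2_{H^3}\le C(1+\norm{u}^2_{H^2}+\norm{\p_tu}^2_{H^1}+\norm{v}^2_{W^{1,3}})^3$ closes everything.

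In your setting a cheaper repair also works. Since $\p_\nu w=0$, you have $\frac{d}{dt}\norm{\De u}^2_{L^2}=-2\int\<\n\De u,\n w\>\le 2\norm{u}_{H^3}\norm{w}_{H^1}$. So replacing $\norm{\n\De u}^2_{L^2}$ by $\norm{\De u}^2_{L^2}$ in $E$ makes Step 3 an honest closure.

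Three smaller points:
\begin{enumerate}
\item In Step 3, do not treat $\ep u\times\De u$ as an absorbable error, since it is not small for $\ep$ near $1$. Invert exactly instead: $\tau(u)=\frac{1}{1+\ep^2}(\ep W-u\times W)$ with $W=w+\n_vu$.
\item The fallback in your ``main obstacle'' paragraph is unnecessary, and it is not uniform in $\ep$, since $\n w\in L^6$ is not at the level of $E$. The substitution already gives $w\times\tau(u)=-\frac{|w|^2}{1+\ep^2}u+\dots$. Using $\<u,\n w\>=-\<\n u,w\>$, one gets $|\int\<|w|^2u,\De w\>|\le C\norm{w}^2_{L^6}\norm{\n u}_{L^6}\norm{\n w}_{L^2}$, exactly as the paper handles its term $c$.
\item Before passing to the limit, you still need a continuation step extending $u^\ep$ to $[0,T_0]$ using the uniform bound.
\end{enumerate}
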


%
%

We will only show Theorem \ref{mth2} for the case when the dimension of $\Om$ is 3, as the lower dimensional cases can be demonstrated in a similar manner. The proof of Theorem \ref{mth2} follows a similar argument with that presented in \cite{CW1}, but we need to overcome some new difficulties originated from the vector field $v$. We utilize the local regular solution $u_\ep$ to the following intrinsic parabolic approximation equation for \eqref{eq-ISMF}:
\begin{equation}\label{eq-ASMF}
	\begin{cases}
		\p_tu=\ep\tau_v(u)+u\times\tau_v(u),\quad\quad&\text{(x,t)}\in\Om\times \Real^+,\\[1ex]
		\frac{\p u}{\p \nu}=0, &\text{(x,t)}\in\p\Om\times \Real^+,\\[1ex]
		u(x,0)=u_0: \Om\to \U^2,
	\end{cases}
\end{equation}
that has been given by Carbou and Jizzini in \cite{CJ} (or to see Theorem \ref{mth1} for the details) to approximate a regular solution to \eqref{eq-ISMF}. For simplicity, we usually set \[\tau_v(u)=\tau(u)+u\times\n_v u=\De u+|\n u|^2u+u\times\n_vu.\]

The approximate equation \eqref{eq-ASMF} preserves the inherent geometric structures of the incompressible Schr\"odinger flow:
\begin{itemize}
	\item[$(a)$] For any point $(x,t)$, the equation $\p_tu=\ep\tau_v(u)+u\times\tau_v(u)$ resides within the tangent space $T_{u(x,t)}\mathbb{S}^2$ of the sphere $\mathbb{S}^2$ at the point $u(x,t)$. This ensures that the solution $u_\ep$ to \eqref{eq-ASMF} remains confined to the surface of $\mathbb{S}^2$. Consequently, we can apply the geometric properties of $\mathbb{S}^2$ to derive more precise energy estimates for $u_\ep$;
	\item[$(b)$] The two terms on the right hand of approximate equation \eqref{eq-ASMF} are orthogonal to each other, which implies that $\p_t u_\ep$, $\De \p_tu_\ep$ and $\De \tau_v(u_\ep)$ are suitable test function that comply with the Nuemann boundary conditions when establishing energy estimates for $u_\ep$.
\end{itemize}

The crux of this proof lies on demonstrating a uniform $H^3$-estimate of approximate solution $u_\ep$ with respect to $\ep\in (0,1)$. To achieve this, we establish a critical equivalent norm estimate of $\norm{u_\ep}_{H^3}$, which is given by
\[\norm{u_\ep}^2_{H^3}\leq C(1+\norm{u_\ep}^2_{H^2}+\norm{\frac{\p u_\ep}{\p t}}^2_{H^1}+\norm{v}^2_{W^{1,3}})^3,\]
where $C$ is a constant independent of $u_\ep$. When $v\in L^\infty(\Real^+, W^{1,3}(\Om))$, this estimate implies that obtaining a uniform estimate of $\norm{u_\ep}_{H^3}$ is equivalent to acquiring a uniform bound for the auxiliary functional:
\[G(u_\ep)=\norm{u_\ep}^2_{H^2}+\norm{\frac{\p u_\ep}{\p t}}^2_{H^1}.\]

The above estimate strongly suggests that we should focus on studying the equation for $\p_tu_\ep$. By utilizing the properties of cross product ''$\times$" on $\Real^3$ and the complex structure $u\times: T_u\U^2\to T_u\U^2$ respectively, we can derive the following fine form for the equation of $\p_t u_\ep$:
\begin{equation}\label{K}
	\begin{aligned}
		&\p_t\p_tu_\ep+(1-\ep^2)\De \tau_v(u_\ep)-2\ep\De(u_\ep\times \tau_v(u_\ep))\\
		=&-\ep\{2\n u_\ep\dot{\times}\n\tau_v(u_\ep)+\De u_\ep\times(|\n u_\ep|^2u_\ep+u_\ep\times\n_vu_\ep)\}\\
		&+\ep\{u_\ep\times\n_v\p_tu_\ep+|\n u_\ep|^2\p_tu_\ep+u_\ep\times\n_{\p_tv}u_\ep\}\\
		&+|\n u_\ep|^2\tau_v(u_\ep)-2\<\n u_\ep,\tau_v(u_\ep)\>\cdot\n u_\ep-\n_v\p_tu_\ep\\
		&-\n_{\p_tv}u_\ep+f_1u_\ep+\p_tu_\ep\times f_2,	
	\end{aligned}
\end{equation}
where $\n u_\ep\dot{\times}\n\tau_v(u_\ep)=\sum_{i=1}^m\n_iu_\ep\otimes\n_i\tau_v(u_\ep)$, and
\begin{align*}
	f_1=&\<\De\tau_v(u_\ep), u_\ep\>-\<\p_tu_\ep,\n_vu_\ep\>+2\ep\<\n \p_t u_\ep, \n u_\ep\>,\\
	f_2=&\De u_\ep+u_\ep\times\n_vu_\ep+\ep\n_v u_\ep.
\end{align*}

Subsequently, by employing the geometric structure $(a)$ and $(b)$, we discover that $\p_tu_\ep$ and $\De\p_t u_\ep$ are appropriate test functions that align with equation \eqref{K} since $\frac{\p \p_t u_\ep}{\p \nu}|_{\p\Om}=0$ for all $t\geq 0$. Selecting these two test functions for \eqref{K} allows us to obtain the desired estimate of $G(u_\ep)$. This process involves a meticulous utilization of the geometric information inherent in the target manifold $(\U^2, J=u\times)$ as mentioned in the authors' previous work \cite{CW1}. Additionally, we also capitalize on the assumption of $v$:
\[\textnormal{\mbox{div}}(v)=0 \,\,\text{in} \,\,\Om\times \Real^+ \quad \text{and}\quad  \<v,\nu\>|_{\p\Om\times \Real^+}=0.\]
For instance, when selecting $\p_t u_\ep$ as test function for \eqref{K}, we can use the properties $|u_\ep|\equiv 0$ and $\<c\times d, c\>=0$ for any vectors $c,d$, to demonstrate that
\[\int_{\Om}f_1\<u_\ep,\p_t u_\ep\>dx=0 \,\,\, \text{and}\,\,\, \int_{\Om}\<\p_tu_\ep\times f_2,\p_t u_\ep\>dx=0,\]
despite the complicity of the terms $f_1$ and $f_2$. Furthermore, by applying the assumptions made about $v$, we can also show that the term$\int_{\Om}\<\n_v\p_t u_\ep, \p_t u_\ep\>dx=0$. For the comprehensive uniform $H^3$-estimate of $u_\ep$, one can refer to Section \ref{s: reg-solu} for the details. 

\begin{rem}\
	
\begin{itemize}

\item[$(1)$] Since our proofs for Theorem \ref{th2} and Theorem \ref{mth2} rely heavily on the assumption that $v$ is divergence-free, it seems that our current arguments may not be valid when $v$ is not divergence free. This naturally leads to the question $Q_1$: Can we show the existence of global weak solutions or local regular solutions to the problem \eqref{eq-ISMF} where $v$ is a general time dependent vector field? This is a challenging problem that requires further investigation and possibly new techniques.

\item[$(2)$] Motivated by our previous result \cite{CW3}, we pose the question $Q_2$: What compatibility boundary conditions on $u_0$ and $v$ can guarantee the existence of very regular solutions to the initial-Neumann boundary value problem to the incompressible Schr\"{o}dinger flow. This question explores the role of boundary conditions in determining the regularity of solutions and is an important direction for our future research.
\end{itemize}	
\end{rem}

\medskip
The rest part of this paper will be organized as follows. In Section \ref{s: pre}, we provide the necessary background on Sobolev spaces and present preliminary lemmas. In Section \ref{s: w-sol}, we establish the global existence of the incompressible Schr\"{o}dinger flow with Neumann boundary conditions in a bounded domain in $ \mathbb{R}^m$ with $m\geq 1$. Finally, Section \ref{s: reg-solu} is dedicated to proving the existence of local regular solutions for the incompressible Schr\"{o}dinger flow.

\section{Preliminary}\label{s: pre}
\subsection{Notations}

In this section, we start with recalling some notations on Sobolev spaces which will be used in following context. Let $\Om$ be a smooth bounded domain in $\Real^n$ with $n\in \mathbb{N}$, $u=(u_1,u_2,u_3):\Om\to\U^{2}\hookrightarrow\Real^3$ be a map. We set
\[H^{k}(\Om,\U^{2})=\{u\in H^{k}(\Om):|u|=1\,\,\text{for a.e. x}\in \Om\},\]
where we denote $H^k(\Om)=W^{k,2}(\Om, \Real^3)$.

Moreover, let $(B,\norm{.}_B)$ be a Banach space and $f:[0,T]\to B$ be a map. For any $p>0$ and $T>0$, recall that
\[\norm{f}_{L^p([0,T], B)}:=\(\int_{0}^{T}\norm{f}^p_{B}dt\)^{\frac{1}{p}},\]
and
\[L^p([0,T],B):=\{f:[0,T]\to B:\norm{f}_{L^p([0,T],B)}<\infty\}.\]
In particular, we denote
\[L^{p}([0,T],H^{k}(\Om,\U^{2}))=\{u\in L^{p}([0,T],H^{k}(\Om,\Real^{3})):|u|=1\,\,\text{for a.e. (x,t)}\in \Om\times[0,T]\},\]
where $k,\,l\in \mathbb{N}$  and $p\geq 1$.

Without lose of generality and for simplicity, we always use $C$ to denote constants independent of $\ep$ appearing in energy estimates in the subsequent context.

\medskip
\subsection{Preliminary lemmas}
Next, for later application, we need to recall some critical lemmas.
\begin{lem}\label{eq-norm}
	Let $\Om$ be a bounded smooth domain in $\Real^{m}$ and $k\in \mathbb{  N}$. There exists a constant $C_{k,m}$ such that, for all $u\in H^{k+2}(\Om)$ with $\frac{\p u}{\p \nu}|_{\p\Om}=0$,
	\begin{equation}\label{eq-n}
		\norm{u}_{H^{2+k}(\Om)}\leq C_{k,m}(\norm{u}_{L^{2}(\Om)}+\norm{\De u}_{H^{k}(\Om)}).
	\end{equation}
	Here, for simplicity we denote $H^0(\Om):=L^2(\Om)$.
\end{lem}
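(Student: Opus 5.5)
This estimate is the classical elliptic regularity theory for the Neumann problem, and the plan is to derive it from the standard $H^{k+2}$ a priori estimate together with a compactness argument that removes the kernel. The starting point is the standard result (Agmon--Douglis--Nirenberg, or the $L^2$ theory in Gilbarg--Trudinger / Lions--Magenes) for the Neumann problem
\[
-\De u+u=f \ \text{in } \Om,\qquad \frac{\p u}{\p\nu}=0 \ \text{on } \p\Om .
\]
Since $\Om$ is smooth and bounded, for $f\in H^k(\Om)$ the unique weak solution in $H^1(\Om)$ lies in $H^{k+2}(\Om)$. Moreover,
\[
\norm{u}_{H^{k+2}}\leq C_{k,m}\norm{f}_{H^k}.
\]
The zero-order term makes the operator coercive on $H^1$, so uniqueness and the $H^1$ bound follow from Lax--Milgram.

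Given $u\in H^{k+2}(\Om)$ with $\frac{\p u}{\p\nu}|_{\p\Om}=0$, set $f:=-\De u+u\in H^k(\Om)$. Then $u$ is the unique solution of the problem above with this right-hand side. It follows that
\[
\norm{u}_{H^{k+2}}\leq C\norm{-\De u+u}_{H^k}\leq C\(\norm{\De u}_{H^k}+\norm{u}_{H^k}\).
\]

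It remains to replace $\norm{u}_{H^k}$ by $\norm{u}_{L^2}$. For $k=0$ there is nothing to do. For $k\geq1$, use the interpolation (Ehrling) inequality, which holds by compactness of $H^{k+2}\hookrightarrow H^k$ on smooth bounded domains:
\[
\norm{u}_{H^k}\leq \delta\norm{u}_{H^{k+2}}+C_\delta\norm{u}_{L^2}.
\]
Choosing $\delta$ small absorbs the $H^{k+2}$ term into the left-hand side, which gives the claimed estimate \eqref{eq-n}.

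The main technical input is the boundary regularity step: the estimate $\norm{u}_{H^{k+2}}\lesssim\norm{f}_{H^k}$ up to the boundary for the Neumann condition. This is obtained by flattening the boundary and using difference quotients in tangential directions, then recovering the normal derivatives from the equation, with induction on $k$. We take this as the standard cited result rather than reprove it.

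An alternative route to the final estimate is a contradiction argument. Suppose there are $u_n$ with $\norm{u_n}_{H^{k+2}}=1$ and $\norm{u_n}_{L^2}+\norm{\De u_n}_{H^k}\to0$. Compactness then gives a limit $u$ with $\De u=0$, Neumann data zero and $u=0$, which contradicts $\norm{u_n}_{H^{k+2}}=1$ by the first estimate.
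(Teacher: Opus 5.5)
Your proof is correct. The paper gives no proof of this lemma; it simply recalls it as standard elliptic regularity for the Neumann problem. Reducing to the coercive operator $-\De+I$ and then removing $\norm{u}_{H^k}$ with Ehrling's inequality is exactly the standard justification being invoked. In your contradiction variant, the contradiction really comes from $u_n\to 0$ strongly in $H^k$ inserted into the first estimate; the properties $\De u=0$ and zero Neumann data of the limit are not needed.
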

In particular, the above lemma implies that we can define the $H^{k+2}$-norm of $u$ as follows
\[\norm{u}_{H^{k+2}(\Om)}:=\norm{u}_{L^2(\Om)}+\norm{\De u}_{H^k(\Om)}.\]

\begin{lem}\label{Gron-inq}
Let $f: \Real^+\to \Real^+$ be a nondecreasing continuous function such that $f>0$ on $(0,\infty)$ and $\int_{1}^{\infty}\frac{1}{f}dx<\infty$. Let $y$ be a continuous function which is nonnegative on $\Real^+$ and let $g$ be a nonnegative function in $L^{1}_{loc}(\Real^+)$. We assume that there exists a $y_0>0$ such that for all $t\geq0$, we have the inequality
\[y(t)\leq y_0+\int_{0}^{t}g(s)ds+\int_{0}^{t}f(y(s))ds.\]
Then, there exists a positive number $T^*$ depending only on $y_0$, $g$ and $f$, such that for all $T<T^*$, there holds
\[\sup_{0\leq t\leq T}y(t)\leq C(T,y_0),\]
for some constant $C(T,y_0)$.
\end{lem}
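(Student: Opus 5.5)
The statement is a nonlinear Gronwall-type (Bihari) comparison. I plan to compare $y$ with the solution of the ODE obtained by replacing $y_0$ with $y_0+\int_0^t g$.

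\textbf{Step 1: reduce to a differential inequality.} Set $G(t)=\int_0^t g(s)\,ds$, which is nondecreasing and absolutely continuous since $g\in L^1_{loc}$. Define
\[
z(t)=y_0+G(t)+\int_0^t f(y(s))\,ds .
\]
Then $z$ is absolutely continuous, $z\ge y\ge 0$, $z(0)=y_0>0$, and for a.e. $t$
\[
z'(t)=g(t)+f(y(t))\le g(t)+f(z(t)),
\]
where the last inequality uses that $f$ is nondecreasing.

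\textbf{Step 2: freeze $g$ on a fixed window.} Fix $\bar T>0$, say $\bar T=1$, and put $A=y_0+G(\bar T)$. On $[0,\bar T]$ consider $w(t)=z(t)-G(t)+G(\bar T)$. Then $w\ge z$, $w(0)=A$, and
\[
w'(t)=f(y(t))\le f(z(t))\le f(w(t)).
\]
Let $F(r)=\int_A^r \frac{ds}{f(s)}$ for $r\ge A$. This is well defined because $f>0$ on $(0,\infty)$ and $A>0$. It is finite as $r\to\infty$ because $\int_1^\infty 1/f<\infty$ and $f$ is continuous and positive on $[A,1]$ if $A<1$. Write $F(\infty)=\int_A^\infty ds/f(s)<\infty$.

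Since $F$ is $C^1$ and increasing, the chain rule for absolutely continuous functions gives $\frac{d}{dt}F(w(t))=w'/f(w)\le 1$. Hence $F(w(t))\le t$.

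\textbf{Step 3: conclude.} Define $T^*=\min\{\bar T, F(\infty)\}$, which depends only on $y_0$, $g$ (through $G(1)$) and $f$. For $T<T^*$ and $t\le T$ we have $F(w(t))\le T<F(\infty)$. Therefore $w(t)\le F^{-1}(T)$, and so
\[
\sup_{0\le t\le T} y(t)\le \sup_{0\le t\le T} w(t)\le F^{-1}(T)=:C(T,y_0),
\]
which is finite because $F$ is a bijection from $[A,\infty)$ onto $[0,F(\infty))$.

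\textbf{Main obstacle.} The main technical point is the treatment of the time-dependent forcing $g$, which is only $L^1_{loc}$. Freezing $G$ at $G(\bar T)$ handles it, at the cost of capping $T^*\le\bar T$. The secondary issue is justifying the chain rule for $F\circ w$ with $w$ merely absolutely continuous. This holds because $F\in C^1$ on $[A,\infty)$ and $w\ge A$.
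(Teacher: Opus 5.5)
Your proof is correct and complete. The paper gives no proof of Lemma~\ref{Gron-inq}: it is quoted in the preliminaries as a known nonlinear Gronwall-type (Bihari) lemma and then applied as a black box in Proposition~\ref{Key-es} with $f(y)=C(y+M)^4$. There is therefore no internal argument to compare yours against.

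Your comparison function $w(t)=y_0+G(\bar T)+\int_0^t f(y(s))\,ds$ covers every point that needs checking:
\begin{itemize}
\item $w\ge z\ge y$ on $[0,\bar T]$, because $g\ge 0$.
\item $w$ is in fact $C^1$ and nondecreasing, since $f\circ y$ is continuous and nonnegative. Hence $w\ge A$, $F\circ w$ is defined, and the chain rule is elementary.
\item $F(w(t))\le t$ gives the bound $F^{-1}(T)$ for $T<\min\{\bar T,F(\infty)\}$.
\item $T^*$ depends only on $y_0$, $G(1)$ and $f$, as the lemma requires.
\end{itemize}
The constant also depends on $g$ and $f$, not only on $(T,y_0)$. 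This is unavoidable, and the lemma's notation $C(T,y_0)$ should be read that way.

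Two optional remarks. First, you can avoid freezing $G$. Since $z\ge y_0$ and $f$ is nondecreasing, $\frac{d}{dt}\int_{y_0}^{z(t)}\frac{ds}{f(s)}\le 1+\frac{g(t)}{f(y_0)}$ a.e. This gives the bound up to the first time $t+G(t)/f(y_0)$ reaches $\int_{y_0}^\infty ds/f$, without the artificial cap $T^*\le\bar T$. Neither choice of $T^*$ dominates the other in general. Second, your argument never actually uses the hypothesis $\int_1^\infty 1/f<\infty$. If that integral diverged, $F(\infty)=\infty$ and you would get bounds on all of $[0,\bar T]$ for every $\bar T$. The hypothesis only signals that finite-time blow-up cannot be ruled out in the application.
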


\begin{lem}[Theorem II.5.16 in \cite{BF} or \cite{Sim}]\label{A-S}
Let $X\subset B\subset Y$ be Banach spaces. Suppose that the embedding $B\hookrightarrow Y$ is continuous and that the embedding $X\hookrightarrow B$ is compact. Let $1\leq p,q,r\leq \infty$. For $T>0$, we define
\[E_{p,r}=\{f\in L^{p}((0,T), X), \frac{d f}{dt}\in L^{r}((0,T), Y)\},\]
which equipped a norm $\norm{f}:=\norm{f}_{L^{p}((0,T), X)}+ \norm{\frac{d f}{dt}}_{L^{r}((0,T), Y)}$.
Then, the following properties hold true.
	\begin{itemize}
		\item[$(1)$] If $p< \infty$, then the embedding $E_{p,r}$ in $L^p((0,T), B)$ is compact.
		\item[$(2)$] If $p< \infty$ and $p<q$, the embedding $E_{p,r}\cap L^q((0,T), B)$ in $L^s((0,T), B)$ is compact for all $1\leq s<q$.
		\item[$(3)$] If $p=\infty$ and $r>1$, the embedding of $E_{p,r}$ in $C^0([0,T], B)$ is compact.
	\end{itemize}
\end{lem}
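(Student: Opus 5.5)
We would prove all three parts by combining Ehrling's interpolation inequality with a time-regularity (equicontinuity) estimate coming from the bound on $\frac{df}{dt}$. This is the classical Aubin--Lions--Simon scheme.

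\textbf{Step 0 (Ehrling's lemma).} First we show that for every $\eta>0$ there is $C_\eta$ with
\[\norm{w}_B\leq \eta\norm{w}_X+C_\eta\norm{w}_Y\quad\text{for all } w\in X.\]
We argue by contradiction. Otherwise there are $w_n$ with $\norm{w_n}_X=1$ and $\norm{w_n}_B>\eta+n\norm{w_n}_Y$. By compactness of $X\hookrightarrow B$, a subsequence converges in $B$ to some $w$ with $\norm{w}_B\geq\eta$. On the other hand, $\norm{w_n}_Y\leq C/n\to0$, and continuity of $B\hookrightarrow Y$ then forces $w=0$, which is a contradiction. The consequence is that compactness in $Y$-valued spaces, together with boundedness in $X$-valued spaces, upgrades to compactness in $B$-valued spaces.

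\textbf{Step 1 (part (1)).} Let $\mathcal F$ be bounded in $E_{p,r}$ with $p<\infty$. Since $r\geq1$, each $f\in\mathcal F$ is absolutely continuous into $Y$, and
\[\norm{f(t+h)-f(t)}_Y\leq\int_t^{t+h}\norm{f'}_Y\,ds.\]
This gives $\norm{\tau_hf-f}_{L^p(0,T-h;Y)}\to0$ as $h\to0$, uniformly on $\mathcal F$. To obtain this in $L^p$, we use $\norm{\tau_hf-f}_Y\le \norm{f'}_{L^1(Y)}$ pointwise together with an $L^1$ bound, and interpolate with the $L^p(X)\subset L^p(Y)$ bound. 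Next we check that the time averages $\int_{t_1}^{t_2}f\,dt$ are bounded in $X$, hence relatively compact in $B$. Simon's compactness criterion (a vector-valued Riesz--Fr\'echet--Kolmogorov theorem) then yields relative compactness of $\mathcal F$ in $L^p(0,T;Y)$. Finally, Step 0 applied pointwise in $t$ gives
\[\norm{f-g}_{L^p(B)}\leq 2\eta\sup_{\mathcal F}\norm{f}_{L^p(X)}+C_\eta\norm{f-g}_{L^p(Y)},\]
so Cauchy sequences in $L^p(Y)$ are Cauchy in $L^p(B)$.

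\textbf{Step 2 (part (2)).} Take a sequence bounded in $E_{p,r}\cap L^q(B)$. By Step 1 it has a subsequence that is Cauchy in $L^p(B)$. For $s<q$, H\"older interpolation gives
\[\norm{f}_{L^s(B)}\leq\norm{f}_{L^p(B)}^{\theta}\norm{f}_{L^q(B)}^{1-\theta}\quad\text{when } p\leq s<q.\]
The case $s<p$ is trivial on the bounded interval. Hence the subsequence is also Cauchy in $L^s(B)$.

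\textbf{Step 3 (part (3)).} Here $p=\infty$ and $r>1$. We use Arzel\`a--Ascoli in $C^0([0,T],B)$, which requires two ingredients.
\begin{itemize}
\item[$(i)$] \emph{Equicontinuity in $Y$.} H\"older's inequality gives $\norm{f(t)-f(s)}_Y\leq|t-s|^{1-1/r}\norm{f'}_{L^r(Y)}$.
\item[$(ii)$] \emph{Pointwise values bounded in $X$.} For every $t$, $f(t)$ lies in a fixed $X$-ball. Since $f\in L^\infty(X)$ is $Y$-continuous, $f(t)$ is a $Y$-limit of averages that are bounded in $X$. This step needs lower semicontinuity of the $X$-norm under $Y$-convergence, or reflexivity of $X$; alternatively we work with the closure of the $X$-ball in $B$, which is compact.
\end{itemize}
Given (ii), Step 0 converts (i) into equicontinuity in $B$: the $\eta$-term is controlled by the $X$ bound, and the $C_\eta$-term by the $Y$-H\"older modulus. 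Also by (ii), the pointwise values are relatively compact in $B$. Arzel\`a--Ascoli then finishes part (3).

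\textbf{Main obstacle.} The delicate point is Step 1: obtaining relative compactness in $L^p(Y)$ from only an $L^r$ bound on $f'$ with $r$ possibly equal to $1$. We would follow Simon's approach. First we approximate $f$ uniformly over $\mathcal F$ by the mollified averages $f_h(t)=\frac1h\int_t^{t+h}f$; this is controlled by the translation estimate. Then we show that, for fixed $h$, the family $\{f_h\}$ is bounded in $C([0,T-h],X)$ and equicontinuous in $Y$. Consequently $\{f_h\}$ is compact in $C([0,T-h],B)$ by the argument of Step 3, and we conclude by a diagonal argument.
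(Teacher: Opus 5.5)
Your argument is correct, and it is essentially the standard Aubin--Lions--Simon proof from the references the paper cites. The paper itself gives no proof of this lemma. Your steps are: Ehrling's inequality, uniform $L^p(0,T-h;Y)$ translation estimates obtained from the $L^1$ bound on $\frac{df}{dt}$, Simon's compactness criterion, Ehrling again to pass from $L^p(Y)$ to $L^p(B)$, H\"older interpolation for (2), and Arzel\`a--Ascoli for (3). In (3), working with the $B$-closure of the $X$-ball is the right choice: it is compact in $B$, and Ehrling's inequality extends to it by continuity, so no reflexivity or lower semicontinuity assumption on $X$ is needed.
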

\begin{lem}[Theorem II.5.14 in \cite{BF}]\label{C^0-em}
Let $k\in  \mathbb{  N}$, then the space
	\[E_{2,2}=\{f\in L^{2}((0,T),H^{k+2}(\Om)),\frac{\p f}{\p t}\in L^{2}((0,T), H^k(\Om))\}\]
is continuously embedded in $C^0([0,T], H^{k+1}(\Om))$.
\end{lem}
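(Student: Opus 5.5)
The plan is to reduce to the whole space via a spatial extension operator, prove the embedding there with a Fourier-side energy identity in time, and conclude by density. This is the classical Lions--Magenes argument, in which $H^{k+1}$ acts as the pivot space between $H^{k+2}$ and $H^k$.

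\textbf{Step 1: extension in space.} Since $\Om$ is smooth and bounded, I take Stein's universal extension operator $\mathcal{E}$. It is bounded simultaneously from $H^j(\Om)$ to $H^j(\Real^m)$ for every $j\geq 0$. Applied pointwise in $t$, it is linear and commutes with $\p_t$. Hence $\mathcal{E}$ maps $E_{2,2}(\Om)$ boundedly into the analogous space $E_{2,2}(\Real^m)$, and restriction maps $C^0([0,T],H^{k+1}(\Real^m))$ into $C^0([0,T],H^{k+1}(\Om))$. So it suffices to prove the lemma on $\Real^m$.

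\textbf{Step 2: extension and regularization in time.} I extend $f$ to $(-T,2T)$ by even reflection about $t=0$ and $t=T$. This keeps $f\in L^2H^{k+2}$ and $\p_t f\in L^2H^k$ with comparable norms, because the reflected derivative is still an $L^2$ function: no jump appears, since $f$ is absolutely continuous with values in $H^k$. Mollifying in $t$ then produces $f_n\in C^\infty([0,T],H^{k+2}(\Real^m))$ with $f_n\to f$ in $E_{2,2}$.

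\textbf{Step 3: the key inequality for smooth $g$.} For $g$ smooth in time, write $\norm{g}^2_{H^{k+1}}=\int(1+|\xi|^2)^{k+1}|\hat g|^2d\xi$. Then
\[
\frac{d}{dt}\norm{g}^2_{H^{k+1}}=2\,\mathrm{Re}\int(1+|\xi|^2)^{\frac{k+2}{2}}\hat g\;\overline{(1+|\xi|^2)^{\frac{k}{2}}\widehat{\p_tg}}\,d\xi ,
\]
so that
\[
\Big|\frac{d}{dt}\norm{g}^2_{H^{k+1}}\Big|\leq 2\norm{g}_{H^{k+2}}\norm{\p_t g}_{H^k}.
\]
Choose $t_0$ with $\norm{g(t_0)}^2_{H^{k+1}}\leq \frac1T\int_0^T\norm{g}^2_{H^{k+1}}dt$, and integrate from $t_0$. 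This gives
\[
\sup_{0\leq t\leq T}\norm{g(t)}^2_{H^{k+1}}\leq \frac1T\norm{g}^2_{L^2H^{k+2}}+\norm{g}^2_{L^2H^{k+2}}+\norm{\p_tg}^2_{L^2H^k}\leq C(T)\norm{g}^2_{E_{2,2}}.
\]

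\textbf{Step 4: conclusion.} Applying Step 3 to $g=f_n-f_l$ shows that $(f_n)$ is Cauchy in $C^0([0,T],H^{k+1})$. Its limit is continuous and coincides a.e.\ with $f$, since $f_n\to f$ in $L^2H^{k+1}$. Passing to the limit in the inequality for $f_n$ gives the continuous embedding with constant $C(T)$.

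\textbf{Main obstacle.} The only delicate point is Step 1. We need an extension operator that is bounded on $H^{k+2}$ and $H^k$ at the same time and is compatible with time differentiation, because no boundary condition is assumed on $f$. Stein's operator, or a higher-order reflection in boundary charts, supplies exactly this. An alternative would be to use the Neumann Laplacian spectral decomposition together with Lemma \ref{eq-norm}, but that route requires a boundary condition that the statement does not impose.
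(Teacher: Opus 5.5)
Your proof is correct. The paper gives no argument for this lemma; it is simply quoted from Boyer--Fabrie (Theorem II.5.14). So your proposal is a self-contained replacement for a citation, not a variant of a proof in the text.

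The usual textbook route is the abstract intermediate-derivative theorem ($f\in L^2(X)$ and $\p_t f\in L^2(Y)$ imply $f\in C^0([0,T],[X,Y]_{1/2})$), combined with the interpolation identity $[H^{k+2}(\Om),H^k(\Om)]_{1/2}=H^{k+1}(\Om)$. On a domain that identity is itself proved with an extension operator. You avoid interpolation theory altogether with a single Plancherel computation, which makes $H^{k+2}\subset H^{k+1}\subset H^k$ an exact pivot triple on $\Real^m$.

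Your approach also shows clearly why Step 1 cannot be skipped. On $\Om$ the pairing $\langle f,\p_t f\rangle_{H^{k+1}(\Om)}$ is not controlled by $\norm{f}_{H^{k+2}}\norm{\p_t f}_{H^k}$: moving a derivative off $\p_t f$ produces boundary terms involving traces of $k$-th derivatives of an $H^k$ function. Extending removes these terms, which is why no boundary condition on $f$ is needed.

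You correctly singled out the one delicate point. The extension must be a single operator acting consistently on $H^k$ and $H^{k+2}$, so that $\p_t(\mathcal{E}f)=\mathcal{E}(\p_t f)$ holds as $H^k(\Real^m)$-valued distributions. Stein's operator does this; for a smooth domain, a finite-order reflection bounded on $H^j$ for $j\le k+2$ would also suffice.

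Your constant has the form $C(1+T^{-1})$. This is harmless, since $T$ is fixed where the paper applies the lemma (Lemma \ref{c-c}, with $k=2$).
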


\section{Global weak solutions}\label{s: w-sol}

In this section, we prove the global existence of the weak solution to the incompressible Sch\"odinger flow \eqref{eq-ISMF}. For this end we adopt the following approximate equation
\begin{equation}\label{app-ISMF}
	\begin{cases}
		\p_tu+\n_v u=\varepsilon\Delta u+J(u)\times\De u\quad &\mbox{in}\,\, \Omega\times \Real^+,\\[1ex]
		u(0,\cdot)=u_0:\,\Omega\rightarrow \U^2,\quad\quad \frac{\partial u}{\partial\nu}=0\quad &\mbox{on}\,\, \partial\Om\times \Real^+,
	\end{cases}
\end{equation}
where $0<\varepsilon<1$ is a positive constant, $J(u)$ is defined by
\[J(u)\equiv\frac{u}{\max\{1,|u|\}}.\]
Here the vector field $v$ satisfies that $v\in  L^2(\Real^+,L^\infty(\Omega))$, $\n v\in L^1(\Real^+, L^\infty(\Om))$, $\textnormal{\mbox{div}}(v)=0$ inside $\Om$ for any $t\in\Real^+$ and $\<v,\nu\>|_{\p\Om\times \Real^+}=0$. 

Next, we will construct a weak solution of \eqref{app-ISMF} by the classical Galerkin Approximation method and then show some a priori estimates on its solutions.
\subsection{Galerkin approximation and a priori estimates}
Let $\Omega$ be a bounded smooth domain in $ \mathbb{R}^m$, $\lambda_i$ be the $i$-th eigenvalue of the operator $\Delta-I$ with Neumann boundary condition, whose corresponding eigenfunction is $f_i$, that is
\begin{equation*}
	\begin{cases}
		(\Delta-I)f_i=-\lambda_i f_i &x\in \Om,\\[1ex]
		\frac{\partial f_i}{\partial \nu}|_{\partial \Omega}=0.
	\end{cases}
\end{equation*}

Without loss of generality, we assume $\{f_i\}_{i=1}^\infty$ are completely standard orthogonal basis of $L^2(\Omega, \mathbb{  R}^n)$. Let $H_n=span\{f_1,f_2,\cdots,f_n\}$ be a finite subspace of $L^2$, $P_n:L^2\rightarrow H_n$ be the Galerkin projection. In fact, for any $f\in L^2$, $f_n=P_n f=\sum_{i=1}^n\left\langle f,f_i\right\rangle _{L^2}f_i$,
and $\lim_{n\rightarrow\infty}\|f-f_n\|_{L^2}=0$.

Inspired by \cite{W, CW0}, we can choose the following Galerkin approximation equation associated with \eqref{app-ISMF}
\begin{equation}\label{Ga-app}
\begin{cases}
	\partial_t u_n^\varepsilon-\varepsilon\Delta u_n^\varepsilon=P_n\{-v\cdot \nabla u_n^\varepsilon+ J(u_n^\varepsilon)\times \Delta u_n^\varepsilon\}, &(x,t)\in \Omega\times \Real^+,\\[1ex]
		u_n^\varepsilon(x,0)=P_n(u_0)(x).
\end{cases}
\end{equation}

Let $u_n^\varepsilon(x,t)=\sum_{i=1}^n g_i^n(t)f_i(x)$, $g^n(t)=(g_1^n(t),g_2^n(t),\cdots,g_n^n(t))$ be a vector value function. Then, by a direct calculation, we have that $g^n(t)$ satisfies the following ordinary differential equation (ODE)
\begin{equation*}
\begin{cases}
\frac{\partial g^n}{\partial t}=F(t,g^n(t)), \\[1ex]
g^n(0)=(\left\langle u_0,f_1\right\rangle ,\cdots,\left\langle u_0,f_n\right\rangle ),
\end{cases}
\end{equation*}
where $F(g^n)$ is locally Lipshitz on $g^n$, since $J(y)$ is locally Lipshitz on $y$. Hence, there exists a solution $u_n^\varepsilon$ to the problem \eqref{Ga-app} on $[0,T_n)$, where $T_n>0$ is the maximal existence time for the above ODE.

Afterwards, we show uniform energy estimates for the approximation solution $u^\ep_n$ with respect to $n$.

\begin{lem}\label{lm5}
Assume $u_0\in L^2(\Omega)$, then there holds that
\begin{equation}\label{eq4.1}
\sup_{0\leq t\leq T}\int_\Omega |u_n^\varepsilon|^2 dx+2\varepsilon\int_0^T\int_\Omega |\nabla u_n^\varepsilon|^2dxdt \leq \int_\Omega |u_0|^2dx,
\end{equation}
for any $0<T<T_n$. Moreover, this estimate implies that $T_n=+\infty$.
\end{lem}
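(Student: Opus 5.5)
The plan is to test the Galerkin system \eqref{Ga-app} against $u_n^\ep$ itself. Since $u_n^\ep(\cdot,t)\in H_n$ and $P_n$ is the $L^2$-orthogonal projection onto $H_n$, we have $\int_\Om\<P_n F, u_n^\ep\>dx=\int_\Om\<F,u_n^\ep\>dx$ for any $F\in L^2$. Thus, taking the $L^2$ inner product of \eqref{Ga-app} with $u_n^\ep$ gives
\[
\frac12\frac{d}{dt}\int_\Om|u_n^\ep|^2dx-\ep\int_\Om\<\De u_n^\ep,u_n^\ep\>dx=-\int_\Om\<v\cdot\n u_n^\ep,u_n^\ep\>dx+\int_\Om\<J(u_n^\ep)\times\De u_n^\ep,u_n^\ep\>dx .
\]
Each $f_i$ satisfies the Neumann condition, so $u_n^\ep$ does as well, and it is smooth in $x$ because it is a finite combination of smooth eigenfunctions. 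Integrating by parts therefore turns the $\ep$-term into $\ep\int_\Om|\n u_n^\ep|^2dx$, with no boundary contribution.

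Next, the two right-hand terms both vanish. For the transport term, $\<v\cdot\n u,u\>=\frac12 v\cdot\n|u|^2$. Integrating by parts yields $-\frac12\int_\Om \mbox{div}(v)|u|^2dx+\frac12\int_{\p\Om}\<v,\nu\>|u|^2dS$, and both pieces are zero because $\mbox{div}(v)=0$ and $\<v,\nu\>|_{\p\Om}=0$. The integration by parts is legitimate for a.e.\ $t$: $v(t)\in W^{1,\infty}$ for a.e.\ $t$, by the assumptions $v\in L^2L^\infty$ and $\n v\in L^1L^\infty$. For the complex-structure term, $J(u)$ is a scalar multiple of $u$, namely $u/\max\{1,|u|\}$. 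Hence $\<J(u)\times\De u,u\>=0$ pointwise by the orthogonality property $\<c\times d,c\>=0$. This pointwise cancellation is the key observation, and it is the reason for using the truncated $J(u)$ rather than an arbitrary approximation.

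Combining these facts gives $\frac{d}{dt}\|u_n^\ep\|_{L^2}^2+2\ep\|\n u_n^\ep\|_{L^2}^2=0$. This identity holds for a.e.\ $t$, and $g^n$ is absolutely continuous, so we may integrate it. Integrating from $0$ to $t$ and using $\|P_nu_0\|_{L^2}\le\|u_0\|_{L^2}$, which holds by Bessel's inequality, yields \eqref{eq4.1}.

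Finally, the orthonormality of $\{f_i\}$ gives $|g^n(t)|^2=\|u_n^\ep(t)\|_{L^2}^2\le\|u_0\|_{L^2}^2$. So the ODE solution stays in a bounded set of $\Real^n$ on $[0,T_n)$. Since $F$ is locally Lipschitz in $g$ with time dependence through $v$ that is integrable, the standard continuation criterion rules out a finite maximal time, and hence $T_n=+\infty$. The main technical point to check is this last step. The ODE right-hand side is measurable in $t$ with an $L^1_{loc}$ (indeed $L^2_{loc}$) bound coming from $\|v\|_{L^\infty}$, so one must invoke the Carathéodory version of the extension theorem rather than the classical one; the bound on $|g^n|$ then suffices.
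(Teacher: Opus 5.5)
Your proposal is correct and follows the paper's proof. You test the Galerkin system against $u_n^\varepsilon$, kill the transport term using $\mbox{div}(v)=0$ and $\langle v,\nu\rangle|_{\partial\Omega}=0$, kill the $J(u_n^\varepsilon)\times\Delta u_n^\varepsilon$ term by pointwise orthogonality, and deduce $T_n=+\infty$ from $|g^n(t)|^2=\|u_n^\varepsilon(t)\|_{L^2}^2\le\|u_0\|_{L^2}^2$. Your additions fill in steps the paper leaves implicit: removing $P_n$ because it is the orthogonal projection onto $H_n\ni u_n^\varepsilon$, Bessel's inequality for $P_nu_0$, and Carath\'eodory continuation because $v$ enters only measurably in $t$.

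Two small remarks. First, what the integration actually gives is the identity $\|u_n^\varepsilon(t)\|_{L^2}^2+2\varepsilon\int_0^t\|\nabla u_n^\varepsilon\|_{L^2}^2\,ds=\|P_nu_0\|_{L^2}^2$ for each $t$, so \eqref{eq4.1} must be read with the supremum taken over this combined quantity (in particular each term is bounded by $\|u_0\|_{L^2}^2$), which is also how the paper's proof should be read. Second, the cancellation does not depend on the truncation, since $\langle u\times\Delta u,u\rangle=0$ as well; the truncation is there to give $|J(u)\times\Delta u|\le|\Delta u|$, which is used in Lemma \ref{lm7} and in the passage to the limit.
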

\begin{proof}
Multiplying the equation \eqref{Ga-app} by $u_n^\varepsilon$, and integrating by parts, we have
\begin{align*}
& \int_\Omega \partial_t u_n^\varepsilon \cdot u_n^\varepsilon dx =\frac{1}{2}\partial_t\int_\Omega|u_n^\varepsilon|^2dx,\\
& \int_\Omega v\cdot \nabla u_n^\varepsilon\cdot u_n^\varepsilon dx=\int_{\partial\Omega} v\cdot\nu \frac{1}{2}|u_n^\varepsilon|^2ds-\int_\Omega\mbox{div}v \frac{1}{2}|u_n^\varepsilon|^2dx=0,\\
&\varepsilon\int_\Omega \Delta u_n^\varepsilon \cdot u_n^\varepsilon dx=\int_{\partial\Omega} u_n^\varepsilon\cdot\frac{\partial u_n^\varepsilon}{\partial\nu}ds-\varepsilon\int_\Omega |\nabla u_n^\varepsilon|^2dx=-\varepsilon\int_\Omega |\nabla u_n^\varepsilon|^2dx,\\
& \int_\Omega (J(u_n^\varepsilon)\times \Delta u_n^\varepsilon)\cdot u_n^\varepsilon dx= \int_{\Omega} (\frac{u_n^\varepsilon}{\max\{|u^\ep_n,1|\}}\times \Delta u_n^\varepsilon) \cdot u_n^\varepsilon dx=0.
\end{align*}
Then we can easily derive the desired inequality \eqref{eq4.1} from the above estimates. 
	
On the other hand, since $\<g^n(t),g^n(t)\>=\norm{u^\ep_n}^2_{L^2(\Om)}(t)$ for any $t<T_n$, the above estimate for $u^\ep_{n}$ tells us that
\[\sup_{0<t<T_n}|g^n(t)|^2\leq \norm{u_0}^2_{L^2(\Om)}.\]
This implies that $T_n=\infty$.	
\end{proof}

\begin{lem}\label{lm6}
If $u_0\in H^1(\Omega)$ and $\n v\in L^1([0,T],L^\infty(\Omega))$, there holds that
\begin{equation}\label{eq4.2}
\begin{aligned}
&\sup_{0\leq t\leq T}\int_\Omega|\n u_n^\varepsilon|^2dx \leq  \exp(I(T)) \int_\Omega |\nabla u_0|^2dx,\\
&2\varepsilon\int_0^T\int_\Omega|\Delta u_n^\varepsilon|^2dxdt \leq (1+I(T)\exp(I(T))) \int_\Omega |\nabla u_0|^2dx,
\end{aligned}
\end{equation}
for any $0<T<\infty$, where $I(T)=2\int_0^T\|\nabla v\|_{L^\infty(\Omega)}(s)ds$.
\end{lem}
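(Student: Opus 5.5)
The plan is to test the Galerkin equation \eqref{Ga-app} against $-\Delta u_n^\varepsilon$ and then apply Gronwall's inequality. This test function is admissible. Since $u_n^\varepsilon\in H_n$ and the $f_i$ are eigenfunctions of $\Delta-I$, we have $\Delta u_n^\varepsilon=\sum_i g_i^n(1-\lambda_i)f_i\in H_n$. Hence $P_n$ can be dropped when pairing with $\Delta u_n^\varepsilon$, because $\int_\Omega \langle P_n F,\Delta u_n^\varepsilon\rangle=\int_\Omega\langle F,\Delta u_n^\varepsilon\rangle$. Moreover $\partial_\nu u_n^\varepsilon=0$ on $\partial\Omega$, so the boundary terms from integrating by parts vanish.

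Multiplying by $-\Delta u_n^\varepsilon$ and integrating over $\Omega$, the four terms are handled as follows.
\begin{itemize}
\item The time derivative gives $\frac12\frac{d}{dt}\int_\Omega|\nabla u_n^\varepsilon|^2$.
\item The diffusion gives the good term $\varepsilon\int_\Omega|\Delta u_n^\varepsilon|^2$.
\item The complex-structure term $\langle J(u_n^\varepsilon)\times\Delta u_n^\varepsilon,\Delta u_n^\varepsilon\rangle$ vanishes pointwise.
\item The only nontrivial term is the transport term $\int_\Omega\langle v\cdot\nabla u_n^\varepsilon,\Delta u_n^\varepsilon\rangle$.
\end{itemize}

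For the transport term, integrate by parts once using the Neumann condition:
\[
\int_\Omega \langle v^j\partial_j u,\partial_k\partial_k u\rangle=-\int_\Omega \partial_k v^j\langle\partial_j u,\partial_k u\rangle-\int_\Omega v^j\langle\partial_k\partial_j u,\partial_k u\rangle .
\]
The last integral equals $-\frac12\int_\Omega v\cdot\nabla|\nabla u|^2$. After a further integration by parts it equals $\frac12\int_\Omega \mathrm{div}(v)|\nabla u|^2-\frac12\int_{\partial\Omega}\langle v,\nu\rangle|\nabla u|^2=0$, by $\mathrm{div}\,v=0$ and $\langle v,\nu\rangle|_{\partial\Omega}=0$. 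Therefore
\[
\Big|\int_\Omega\langle v\cdot\nabla u_n^\varepsilon,\Delta u_n^\varepsilon\rangle\Big|\le \|\nabla v\|_{L^\infty}\int_\Omega|\nabla u_n^\varepsilon|^2 .
\]
This yields the differential inequality
\[
\frac{d}{dt}\int_\Omega|\nabla u_n^\varepsilon|^2+2\varepsilon\int_\Omega|\Delta u_n^\varepsilon|^2\le 2\|\nabla v\|_{L^\infty}\int_\Omega|\nabla u_n^\varepsilon|^2 .
\]

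Dropping the $\varepsilon$ term and applying Gronwall gives $\int_\Omega|\nabla u_n^\varepsilon|^2(t)\le \exp(I(t))\int_\Omega|\nabla P_nu_0|^2$. Integrating the inequality over $[0,T]$ and inserting this bound gives
\[
2\varepsilon\int_0^T\!\!\int_\Omega|\Delta u_n^\varepsilon|^2\le\Big(1+\exp(I(T))\,I(T)\Big)\int_\Omega|\nabla P_n u_0|^2 .
\]
This uses $\int_0^T 2\|\nabla v\|_{L^\infty}\exp(I(t))\,dt\le I(T)\exp(I(T))$.

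It remains to replace $P_nu_0$ by $u_0$, that is, to show $\|\nabla P_nu_0\|_{L^2}\le\|\nabla u_0\|_{L^2}$. Since $\{f_i\}$ diagonalize the Neumann operator $I-\Delta$, we have $\|u\|_{H^1}^2=\sum\lambda_i|\langle u,f_i\rangle|^2$, and $P_n$ is a contraction in $H^1$. The constant eigenfunction gives $\lambda_1=1$ and $\int|\nabla f_i|^2=\lambda_i-1$, with the $\nabla f_i$ mutually orthogonal in $L^2$. It follows that $\|\nabla P_nu_0\|^2=\sum_{i\le n}(\lambda_i-1)|\langle u_0,f_i\rangle|^2\le\|\nabla u_0\|^2$.

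The main obstacle is the transport term: showing that the second-derivative piece cancels exactly through the divergence-free and tangency conditions on $v$, so that only $\nabla v$ appears. The projection bound is routine by comparison.
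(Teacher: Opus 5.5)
Your proposal is correct and follows the paper's proof step by step. You test \eqref{Ga-app} against $-\Delta u_n^\varepsilon$, the $J$-term vanishes pointwise, and you integrate the transport term by parts using the Neumann condition, $\mathrm{div}\,v=0$ and $\langle v,\nu\rangle|_{\partial\Omega}=0$; then Gronwall together with $\|\nabla P_n u_0\|_{L^2}\le\|\nabla u_0\|_{L^2}$ finishes. You also supply two justifications the paper leaves implicit: that $P_n$ can be dropped because $\Delta u_n^\varepsilon\in H_n$, and a spectral proof that $P_n$ does not increase the Dirichlet energy.
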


\begin{proof}
Multiplying the equation \eqref{Ga-app} by $-\Delta u_n^\varepsilon$, and integrating by parts, we have
\begin{align*}
& \int_\Omega \partial_t u_n^\varepsilon \cdot \Delta u_n^\varepsilon dx
=\int_{\partial\Omega} \partial_t u_n^\varepsilon\cdot \frac{\partial u_n^\varepsilon}{\partial\nu}ds- \frac{1}{2}\partial_t \int_\Omega |\nabla u_n^\varepsilon|^2dx
=- \frac{1}{2}\partial_t \int_\Omega |\nabla u_n^\varepsilon|^2dx,\\
&    \int_\Omega v\cdot\nabla u_n^\varepsilon \cdot \Delta u_n^\varepsilon dx\\
=&\int_{\partial\Omega} v\cdot\nabla u_n^\varepsilon \frac{\partial u_n^\varepsilon}{\partial \nu}ds-\sum_{i,j,k=1}^n\int_\Omega \partial_k v_i\partial_i (u_n^\varepsilon)_j\partial_k (u_n^\varepsilon)_jdx-\int_{\partial\Omega} v\cdot\nu \frac{1}{2}|\nabla u_n^\varepsilon|^2ds\\
=&-\sum_{i,j,k=1}^n\int_\Omega \partial_k v_j\partial_i (u_n^\varepsilon)_j\partial_k (u_n^\varepsilon)_jdx
\leq   \|\nabla v\|_{L^\infty(\Omega)}\int_\Omega|\nabla u_n^\varepsilon|^2dx,\\
& \int_\Omega\varepsilon\Delta u_n^\varepsilon \cdot \Delta u_n^\varepsilon dx=\varepsilon \int_\Omega |\Delta u_n^\varepsilon|^2dx,\\
&\int_\Omega J(u_n^\varepsilon)\times \Delta u_n^\varepsilon \cdot \Delta u_n^\varepsilon dx=0.
\end{align*}
Then we have
\begin{equation*}
\partial_t  \int_\Omega |\nabla u_n^\varepsilon|^2dx+2\varepsilon \int_\Omega |\Delta u_n^\varepsilon|^2dx\leq 2\|\nabla v\|_{L^\infty(\Omega)}\int_\Omega|\nabla u_n^\varepsilon|^2dx.
\end{equation*}
Using the Gronwall inequality and that fact that
\[\int_{\Om}|\n (P_n(u_0))|^2dx\leq \int_{\Om}|\n u_0|^2dx,\]
for any $T<\infty$ we can obtain
\begin{align*}
&\sup_{0\leq t\leq T}\int_\Omega|\nabla u_n^\varepsilon|^2 dx\leq \exp(I(T)) \int_\Omega |\nabla u_0|^2dx,\\
&2\varepsilon\int_0^T\int_\Omega|\Delta u_n^\varepsilon|^2dxdt \leq (1+I(T)\exp(I(T))) \int_\Omega |\nabla u_0|^2dx,
\end{align*}
where $I(T)=2\int_0^T\|\nabla v\|_{L^\infty(\Omega)}(s)ds$.
\end{proof}

From Lemma \ref{lm5} and Lemma \ref{lm6}, we can get the following estimate for $\partial_t u_n^\varepsilon$.
\begin{lem}\label{lm7}
Assume $u_0\in H^1(\Omega)$, $v\in L^2([0,T],L^\infty(\Omega))$ and $\n v\in L^1([0,T], L^\infty(\Om))$, there holds that
	
\begin{equation}\label{eq4.3}
\ep\int_{\Om}|\p_t u^\ep_n|^2dx\leq (1+\ep+2\ep S(T))(1+I(T)\exp(I(T))) \int_\Omega |\nabla u_0|^2dx,
\end{equation}
where $S(T)=\int_{0}^T\norm{v}^2_{L^\infty}ds$.
\end{lem}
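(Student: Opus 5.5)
We read \eqref{eq4.3} as a bound integrated over $[0,T]$, so the left-hand side is $\ep\int_0^T\int_\Om|\p_t u^\ep_n|^2dxdt$; this is the only form that can be controlled by Lemma \ref{lm5} and Lemma \ref{lm6}. The plan is to bound $\p_tu^\ep_n$ pointwise in time in $L^2$ directly from the Galerkin equation \eqref{Ga-app}, then multiply by $\ep$ and integrate in time. The two quantities appearing on the right are the dissipation bound $\ep\int_0^T\int_\Om|\De u^\ep_n|^2$ and the $\sup_t\|\n u^\ep_n\|^2_{L^2}$ bound, both from Lemma \ref{lm6}.

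\textbf{Step 1 (test with $\p_tu^\ep_n$).} Since $u^\ep_n(t)\in H_n$, we have $\p_tu^\ep_n\in H_n$, and $P_n$ is the $L^2$-orthogonal projection onto $H_n$. Hence pairing \eqref{Ga-app} with $\p_tu^\ep_n$ removes $P_n$:
\[
\int_\Om|\p_tu^\ep_n|^2dx=\ep\int_\Om\<\De u^\ep_n,\p_tu^\ep_n\>dx-\int_\Om\<v\cdot\n u^\ep_n,\p_tu^\ep_n\>dx+\int_\Om\<J(u^\ep_n)\times\De u^\ep_n,\p_tu^\ep_n\>dx .
\]

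\textbf{Step 2 (pointwise estimates).} We use $|J(u)|\le1$, so $|J(u^\ep_n)\times\De u^\ep_n|\le|\De u^\ep_n|$. Each $f_i$ satisfies the Neumann condition, so $\De u^\ep_n\in H_n$ (the $f_i$ are eigenfunctions) and the $\ep$-term is simply bounded by $\ep\|\De u^\ep_n\|_{L^2}\|\p_tu^\ep_n\|_{L^2}$. Cauchy--Schwarz and absorbing one factor of $\|\p_tu^\ep_n\|_{L^2}$ then give
\[
\|\p_tu^\ep_n\|_{L^2}\le(1+\ep)\|\De u^\ep_n\|_{L^2}+\|v\|_{L^\infty}\|\n u^\ep_n\|_{L^2}.
\]
Squaring gives
\[
\|\p_tu^\ep_n\|^2_{L^2}\le C\big((1+\ep)^2\|\De u^\ep_n\|^2_{L^2}+\|v\|^2_{L^\infty}\|\n u^\ep_n\|^2_{L^2}\big)
\]
with an absolute constant $C$.

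\textbf{Step 3 (integrate in time).} Multiply by $\ep$ and integrate over $[0,T]$. The first term is bounded using the second line of \eqref{eq4.2}, namely $\ep\int_0^T\|\De u^\ep_n\|^2\le\frac12(1+I(T)e^{I(T)})\|\n u_0\|^2_{L^2}$. The second term is at most $\ep\,S(T)\sup_t\|\n u^\ep_n\|^2_{L^2}\le\ep S(T)e^{I(T)}\|\n u_0\|^2_{L^2}$ by the first line of \eqref{eq4.2}. Since $e^{I}\le1+Ie^{I}$, both contributions fit into the product form $(1+\ep+2\ep S(T))(1+I(T)e^{I(T)})\|\n u_0\|^2_{L^2}$ claimed in \eqref{eq4.3}.

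The main obstacle is only bookkeeping. Matching the exact numerical constants of \eqref{eq4.3} requires splitting the products with Young's inequality in the right proportions instead of the crude factor-of-three bound. Some care is also needed to keep track of which terms carry a factor $\ep$, so that the $\De$-term stays uniform in $\ep$ when $\ep\to0$.
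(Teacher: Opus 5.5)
Your route is the paper's: you read \eqref{eq4.3} as a bound on $\ep\int_0^T\int_\Om|\p_t u^\ep_n|^2\,dxdt$, bound $\|\p_t u^\ep_n\|_{L^2}$ directly from \eqref{Ga-app}, and then use both lines of \eqref{eq4.2} together with $e^{I}\le 1+Ie^{I}$. The one real defect is the constant.

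The triangle inequality $|\ep\De u^\ep_n+J(u^\ep_n)\times\De u^\ep_n|\le(1+\ep)|\De u^\ep_n|$, followed by squaring with $C=2$, gives
\[
\big((1+\ep)^2+2\ep S(T)\big)\big(1+I(T)e^{I(T)}\big)\|\n u_0\|^2_{L^2}.
\]
This exceeds the right-hand side of \eqref{eq4.3} by $\ep(1+\ep)(1+I(T)e^{I(T)})\|\n u_0\|^2_{L^2}$. So the sentence ``both contributions fit into the product form'' is not justified.

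Re-weighting with Young's inequality does not fix this. Put $a=\|\De u^\ep_n\|_{L^2}$ and $b=\|v\|_{L^\infty}\|\n u^\ep_n\|_{L^2}$, and take $I(T)=0$. The best split of $\big((1+\ep)a+b\big)^2$, fed into \eqref{eq4.2}, leaves the factor $(\sqrt{A}+\sqrt{\ep S(T)})^2$ with $A=(1+\ep)^2/2$. This is larger than $1+\ep+2\ep S(T)$, for instance when $\ep S(T)=A$. The loss happens before squaring, not in it.

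The missing observation is pointwise orthogonality: $J(u^\ep_n)\times\De u^\ep_n\perp\De u^\ep_n$. Combined with $|J|\le 1$, it gives
\[
|\ep\De u^\ep_n+J(u^\ep_n)\times\De u^\ep_n|^2\le(1+\ep^2)|\De u^\ep_n|^2\le(1+\ep)|\De u^\ep_n|^2 \quad\text{for } 0<\ep<1.
\]
So in Step 1, keep $\ep\De u^\ep_n+J(u^\ep_n)\times\De u^\ep_n$ together when you apply Cauchy--Schwarz (recall $\De u^\ep_n\in H_n$ and $P_n$ is an $L^2$-contraction). This yields
\[
\|\p_t u^\ep_n\|^2_{L^2}\le 2(1+\ep)\|\De u^\ep_n\|^2_{L^2}+2\int_\Om|v|^2|\n u^\ep_n|^2dx,
\]
which is exactly the first line of the paper's computation. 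Your Step 3 then gives \eqref{eq4.3} with the stated constant.

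For how the lemma is actually used (compactness at fixed $\ep$), your weaker constant would be harmless. As a proof of the inequality as stated, though, it needs this orthogonality step.
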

\begin{proof}
From the equation \eqref{Ga-app}, we apply a simple computation to show
\begin{equation*}
\begin{split}
\int_0^T\int_{\Om}|\p_t u^\ep_n|^2dxdt\leq& 2(1+\ep) \int_0^T\int_{\Om}|\De u^\ep_n|^2dxdt+2 \int_0^T\int_{\Om}|v|^2|\n u^\ep_n|^2dxdt\\
\leq &\frac{1+\ep}{\ep}(1+I(T)\exp(I(T))) \int_\Omega |\nabla u_0|^2dx+2S(T)\sup_{0<t<T}\int_{\Om}|\n u^\ep|^2dx\\
\leq &(\frac{1+\ep}{\ep}+2S(T))(1+I(T)\exp(I(T))) \int_\Omega |\nabla u_0|^2dx.
\end{split}
\end{equation*}
Here have used the estimate \eqref{eq4.2}, and have denoted $S(T)=\int_{0}^T\norm{v}^2_{L^\infty}ds$.
\end{proof}
Therefore, we can get the following estimates.
\begin{prop}\label{prop1}
Suppose that $u_0\in H^1(\Omega)$, $v\in L^2(\Real^+,L^\infty)$, $\n v\in L^1(\Real^+, L^\infty)$, $\textnormal{\mbox{div}}(v)=0$ inside $\Om$ for any $t\in\Real^+$ and $\<v,\nu\>|_{\p\Om\times \Real^+}=0$. For any $n\in \mathbb{N}$ and $T> 0$, there exists a solution $u_n^\varepsilon \in L^\infty([0,T],H^1(\Omega))\cap L^2([0,T],H^2(\Omega))$ and $\partial_t u_n^\varepsilon\in L^2([0,T],L^2(\Omega))$ to \eqref{Ga-app}. Moreover,  the solution $u^\ep_n$ satisfies the a priori estimates \eqref{eq4.1}, \eqref{eq4.2} and \eqref{eq4.3}.
\end{prop}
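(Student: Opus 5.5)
This proposition essentially collects the preceding construction and estimates, so the proof assembles them in order.

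\emph{Global existence.} First, the Galerkin system \eqref{Ga-app} reduces to the ODE $\frac{d g^n}{dt}=F(t,g^n)$ on $\mathbb{R}^{3n}$. The nonlinearity $F$ is built from $J(y)=y/\max\{1,|y|\}$, which is globally Lipschitz with Lipschitz constant at most $2$, together with the coefficients $\int_\Om\<v\cdot\n f_j,f_i\>dx$.

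Since $v$ is only in $L^2(\Real^+,L^\infty)$, these coefficients are merely $L^2_{loc}$ in time. So I would invoke Carath\'eodory's existence theorem rather than Picard's. The map $F$ is measurable in $t$, locally Lipschitz in $g$, and bounded on bounded sets by an $L^1_{loc}$ function of $t$. This gives a local absolutely continuous solution on a maximal interval $[0,T_n)$.

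Lemma \ref{lm5} then shows $|g^n(t)|^2=\norm{u^\ep_n}^2_{L^2}\le\norm{u_0}^2_{L^2}$. Hence $g^n$ cannot blow up, and $T_n=\infty$. So for every $T>0$ we have a solution on $[0,T]$.

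\emph{Regularity.} Because $u^\ep_n(t)\in H_n$, a finite-dimensional span of smooth Neumann eigenfunctions, all spatial norms on $H_n$ are equivalent. Boundedness of $g^n$ on $[0,T]$ therefore gives $u^\ep_n\in L^\infty([0,T],H^k)$ for every $k$. In particular $u^\ep_n\in L^\infty([0,T],H^1)\cap L^2([0,T],H^2)$.

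Similarly, $\p_tu^\ep_n=\sum_i \dot g^n_i f_i$ with $|\dot g^n|\le C_n(1+\norm{v}_{L^\infty})|g^n|$. Since $v\in L^2_tL^\infty_x$, this gives $\p_tu^\ep_n\in L^2([0,T],L^2)$.

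\emph{A priori estimates.} The estimates \eqref{eq4.1}, \eqref{eq4.2} and \eqref{eq4.3} are exactly Lemmas \ref{lm5}--\ref{lm7}. It remains to justify the test functions used there.

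Testing with $u^\ep_n$ and $-\De u^\ep_n$ against the projected equation is legitimate because both lie in $H_n$. For $w\in H_n$, $P_n$ is self-adjoint and $\<P_n h,w\>=\<h,w\>$, so the projection drops out. The orthogonality identities $\<J(u)\times\De u,u\>=0$ and $\<J(u)\times \De u,\De u\>=0$ then apply pointwise.

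The boundary terms vanish for two reasons. First, $\p_\nu f_i=0$ on $\p\Om$. Second, $\mbox{div}\, v=0$ and $\<v,\nu\>|_{\p\Om}=0$ remove the transport contributions. The one remaining boundary integral, of the form $\int_{\p\Om}(v\cdot\n u)\,\p_\nu u\,ds$, vanishes because $\p_\nu u^\ep_n=0$.

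Gronwall's inequality applies since $\norm{\n v}_{L^\infty}\in L^1$. The initial data satisfy $\norm{P_nu_0}_{L^2}\le\norm{u_0}_{L^2}$ and $\norm{\n P_nu_0}_{L^2}\le\norm{\n u_0}_{L^2}$. The latter holds because $P_n$ is an orthogonal projection in the $H^1$ inner product associated to $-\De+I$, for which the $f_i$ form an orthogonal basis.

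\emph{Main obstacle.} The only delicate point is the low time regularity of $v$, which forces the Carath\'eodory framework. Consequently the energy identities must be understood as holding for a.e. $t$, with $\norm{u^\ep_n}^2_{L^2}$ and $\norm{\n u^\ep_n}^2_{L^2}$ absolutely continuous in $t$. I would address this by noting that these norms are quadratic forms in the absolutely continuous vector $g^n$, so their time derivatives exist a.e. and integrate correctly. The integral form of Gronwall's inequality then applies verbatim.
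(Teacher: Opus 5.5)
Your proposal is correct and follows the paper's route: solve the Galerkin ODE locally, use Lemma \ref{lm5} to rule out blow-up of $g^n$ so that $T_n=\infty$, and read off \eqref{eq4.1}--\eqref{eq4.3} from Lemmas \ref{lm5}--\ref{lm7}. Your extra care fills in details the paper leaves implicit without changing the argument: you use Carath\'eodory's theorem instead of the paper's bare local-Lipschitz claim (needed since $v$ is only $L^2$ in time), give the finite-dimensional argument for the regularity of $u^\ep_n$ and $\p_t u^\ep_n$, and note that $P_n$ drops out when testing with elements of $H_n$.
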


Next, we will consider the compactness of the approximation solution $u_n^\varepsilon$. The main tool is well known Alaoglu's theorem and the Aubin-Lions-Simon compact lemma \ref{A-S}. Thus from the Proposition \ref{prop1} we know there exists a subsequence of $\{u_n^\varepsilon\}$, we still denote it by $\{u_n^\varepsilon\}$, and a $u_n^\varepsilon \in L^\infty([0,T],H^1(\Omega))\cap L^2([0,T],H^2(\Omega))$ and $\partial_t u_n^\varepsilon\in L^2([0,T],H^{-1}(\Omega))$, such that
\begin{align}
	& u_n^\varepsilon\rightharpoonup u^\varepsilon, ~weakly\ast ~in~ L^\infty([0,T],H^1(\Omega)), \label{eq4.6}\\
	& u_n^\varepsilon\rightharpoonup u^\varepsilon, ~weakly~in ~L^2([0,T],H^2(\Omega)).\label{eq4.7}
\end{align}
Next,  let $X=H^2(\Omega)$, $B=H^1(\Omega)$ and $Y=L^2(\Omega)$, then Lemma \ref{A-S} implies that
\begin{equation}\label{eq4.8}
	u_n^\varepsilon \rightarrow u^\varepsilon,~strongly~in~L^p([0,T],H^1(\Omega)),
\end{equation}
for any $p<\infty$.

\begin{thm}\label{lm9}
The limit $u^\ep$ of the sequence $\{u_n^\varepsilon\}$ is a strong solution of the problem \eqref{app-ISMF}, which satisfies the same estimates as those for $u^\ep_n$ in \eqref{eq4.2} and \eqref{eq4.2}. 
\end{thm}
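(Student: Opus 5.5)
The plan is to pass to the limit $n\to\infty$ in the Galerkin system \eqref{Ga-app} for fixed $\ep$. The convergences \eqref{eq4.6}--\eqref{eq4.8} are already available. The first step adds a bound on the time derivative: Lemma \ref{lm7} (read as the space–time bound actually proved there) gives $\p_t u^\ep_n$ bounded in $L^2([0,T],L^2(\Om))$ uniformly in $n$. After extracting a further subsequence we get $\p_t u^\ep_n\rightharpoonup \p_t u^\ep$ weakly in $L^2([0,T],L^2)$. Lemma \ref{A-S}(1), applied with $X=H^2$, $B=H^1$, $Y=L^2$, gives strong convergence in $L^2([0,T],H^1)$. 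Passing to a further subsequence, $u^\ep_n\to u^\ep$ and $\n u^\ep_n\to\n u^\ep$ a.e. in $\Om\times[0,T]$. Lemma \ref{C^0-em} then gives $u^\ep\in C^0([0,T],H^1)$. The initial condition holds because $P_n u_0\to u_0$ in $H^1$, using the compact embedding of Lemma \ref{A-S}(3) into $C^0([0,T],L^2)$.

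The second step fixes $N$ and a test function $\varphi\in C^\infty_c((0,T),H_N)$. For $n\ge N$ we have $P_n\varphi=\varphi$, so
\[\int_0^T\!\!\int_\Om \<\p_t u^\ep_n-\ep\De u^\ep_n+v\cdot\n u^\ep_n-J(u^\ep_n)\times\De u^\ep_n,\varphi\>dxdt=0.\]
We pass to the limit term by term. The linear terms $\p_t u^\ep_n$ and $\De u^\ep_n$ converge weakly in $L^2L^2$. The transport term converges strongly in $L^1L^2$, because $v\in L^2L^\infty$ and $\n u^\ep_n\to\n u^\ep$ strongly in $L^2L^2$.

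The nonlinear term needs more care. The map $J$ is Lipschitz with constant $1$ and bounded by $1$, and $u^\ep_n\to u^\ep$ a.e., so $J(u^\ep_n)\to J(u^\ep)$ a.e. By dominated convergence, $J(u^\ep_n)\times\varphi\to J(u^\ep)\times\varphi$ strongly in $L^2L^2$. Paired with the weak convergence $\De u^\ep_n\rightharpoonup\De u^\ep$ in $L^2L^2$, the product converges. Since $\bigcup_N H_N$ is dense in $L^2$, the equation holds in $L^2(\Om\times[0,T])$, i.e. a.e. The Neumann condition is inherited: the $H^2$ limit of the span of Neumann eigenfunctions lies in the closed subspace $\{\p_\nu u=0\}$ of $H^2$, which is weakly closed.

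The third step transfers the estimates to $u^\ep$ by lower semicontinuity of norms under weak(-$*$) convergence. This applies to $\sup_t\|u^\ep\|_{L^2}$ and $\sup_t\|\n u^\ep\|_{L^2}$ via the weak-$*$ $L^\infty$ limit, and to $\ep\int\!\!\int|\De u^\ep|^2$ and $\ep\int\!\!\int|\n u^\ep|^2$. The bounds \eqref{eq4.1}, \eqref{eq4.2} and \eqref{eq4.3} therefore persist.

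The main obstacle is the nonlinear term $J(u^\ep_n)\times\De u^\ep_n$, since only weak convergence of $\De u^\ep_n$ is available. The argument therefore relies on strong $L^2$ convergence of the bounded factor $J(u^\ep_n)$, which comes from the a.e. convergence supplied by Aubin–Lions together with the uniform bound $|J|\le1$.
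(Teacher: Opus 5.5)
Your proposal is correct and follows essentially the same route as the paper: you pass to the limit term by term in the Galerkin system using Aubin--Lions compactness, treat the nonlinear term by pairing the strongly convergent bounded factor $J(u^\ep_n)$ against the weakly convergent $\De u^\ep_n$ in $L^2([0,T],L^2(\Om))$, and inherit the estimates by lower semicontinuity. The differences are only technical. You test with $\varphi\in C^\infty_c((0,T),H_N)$ so that $P_n\varphi=\varphi$ exactly, whereas the paper tests with general smooth $\varphi$ and leaves the projection implicit. You handle the transport term through strong convergence of $\n u^\ep_n$ in $L^2([0,T],L^2(\Om))$, where the paper integrates by parts using $\mbox{div}(v)=0$ and $\<v,\nu\>=0$. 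You get the Neumann condition from weak closedness of $\{\p_\nu w=0\}$, where the paper uses Green's formula.
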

\begin{proof}
	
For any $\phi\in C^\infty(\bar{\Om}\times [0,T])$, the approximation solution $u^\ep_n$ satisfies
\begin{equation*}
\begin{split}
&\int_0^T\int_\Omega \<\partial_t u^\varepsilon_n,\varphi\>dxdt+ \int_0^T\int_\Omega \<P_n(v\cdot \nabla u^\varepsilon_n), \varphi\>dxdt\\
=&\int_0^T\int_\Omega\<P_n(J(u^\varepsilon_n)\times \Delta u^\varepsilon_n), \varphi\> dxdt +\varepsilon\int_0^T\int_\Omega \<\Delta u^\varepsilon_n, \varphi\>dxdt.
\end{split}
\end{equation*}
	
From the equation \eqref{eq4.6}-\eqref{eq4.8}, we can derive that
\begin{align*}
& \partial_t u_n^\varepsilon \rightarrow \partial_t u^\varepsilon ~~~ weakly* ~in ~L^1([0,T],L^2(\Omega)), \\
& \Delta u_n^\varepsilon \rightarrow \Delta u^\varepsilon ~~~weakly ~in ~L^2([0,T],L^2(\Omega)),\\
& u_n^\varepsilon\rightarrow u^\varepsilon ~~~ strongly ~in ~C^0([0,T],H^1(\Omega)),\\
& u_n^\varepsilon \rightarrow u^\varepsilon ~~~a.e.~~(x,t)\in\Omega\times[0,T]\\
& J(u_n^\varepsilon)\rightarrow J(u^\varepsilon)~~~a.e. ~~(x,t)\in \Omega\times [0,T].
\end{align*}
These convergence results implies that
\begin{align*}
& \int_0^T\int_\Omega \<\partial_t u_n^\varepsilon,\varphi\>dxdt\rightarrow \int_0^T\int_\Omega \<\partial_t u^\varepsilon,\varphi\>dxdt, \\
& \int_{\Om}\<\De u^\ep_n, \varphi\>dx\rightarrow\int_{\Om}\<\De u^\ep, \varphi\>dx,\\
& \int_0^T\int_\Omega\<P_n(J(u_n^\varepsilon)\times \Delta u_n^\varepsilon), \varphi\> dxdt\rightarrow \int_0^T\int_\Omega\<(J(u^\varepsilon)\times \Delta u^\varepsilon),\varphi\>dxdt.
\end{align*}

Therefore, to prove $u^\ep$ is a strong solution to \eqref{app-ISMF}, we still need to show the convergence for that term $\int_0^T\int_\Omega \<v\cdot \nabla u_n^\varepsilon,\varphi\>dxdt$. By applying the fact $\mbox{div}(v)=0$ inside $\Om$ and $\<v, \nu\>_{\p\Om\times \Real^+}$, we have
\begin{align*}
\int_0^T\int_\Omega \<P_n(v\cdot \nabla u_n^\varepsilon), \varphi\> dxdt=&-\int_0^T\int_\Omega \<v\cdot \nabla P_n(\varphi), u^\ep_n\>dxdt\\
\rightarrow&-\int_0^T\int_\Omega \<v\cdot \nabla \varphi, u^\ep\>dxdt\\
=&\int_0^T\int_\Omega \<v\cdot \nabla u^\ep, \varphi\> dxdt.
\end{align*}

It remains that we need to check the Neumann boundary condition. Since for any $\psi \in C^\infty(\bar{\Omega}\times[0,T])$, there holds
\begin{equation*}
\int_0^T\int_\Omega \<\Delta u_n^\varepsilon, \psi\>dxdt=-\int_0^T\int_\Omega \<\nabla u_n^\varepsilon, \nabla \psi\>dxdt.
\end{equation*}
Let $n\rightarrow +\infty$, we have
\begin{equation*}
\int_0^T\int_\Omega \<\Delta u^\varepsilon,\psi\>dxdt=-\int_0^T\int_\Omega\<\nabla u^\varepsilon, \nabla \psi\>dxdt.
\end{equation*}
that is $\frac{\partial u^\varepsilon}{\partial \nu}|_{\partial\Omega \times [0,T]}=0$.
\end{proof}

To proceed, we need to show the following maximal principle for equation \eqref{app-ISMF}. 
\begin{lem}\label{lm10}
Let $u^\varepsilon$ be the solution that we have obtained in Theorem \ref{lm9}, which solves the following equation
\begin{equation}\label{eq-app}
\begin{cases}
\partial_t u^\varepsilon+ v\cdot \nabla u^\varepsilon=\varepsilon\Delta u^\varepsilon+J(u^\varepsilon)\times \Delta u^\varepsilon, &(x,t)\in \Om\times(0,T)\\[1ex]
\frac{\partial u^\varepsilon}{\partial \nu}|_{\partial \Omega\times (0,T)}=0,\\[1ex]
u^\varepsilon(x,0)=u_0:\Om\to \U^2.
\end{cases}
\end{equation}
	
Then $|u^\varepsilon|\leq 1$ for a.e. $(x,t)\in\Omega \times [0,T]$ for any $T<\infty$.
\end{lem}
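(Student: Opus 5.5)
The plan is to test the equation against the truncation $w=(|u^\ep|^2-1)^+$, or more conveniently against $u^\ep\,\varphi'(|u^\ep|^2)$ with $\varphi(s)=(s-1)^+$, and run an energy (Stampacchia-type) argument. Write $u=u^\ep$ and set $\rho=|u|^2$. Take the inner product of \eqref{eq-app} with $u\,\chi_{\{\rho>1\}}$. This is admissible because $u\in L^2([0,T],H^2)$, $\p_tu\in L^2([0,T],L^2)$, so $w=(\rho-1)^+\in L^2([0,T],H^1)$ with $\n w=\chi_{\{\rho>1\}}\n\rho$. Multiplying instead by $u\,w$ gives a cleaner identity. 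The key algebraic point is that $\<J(u)\times\De u,u\>=0$ pointwise, since $J(u)$ is parallel to $u$. Hence the Schr\"odinger term drops out completely, and no divergence structure is needed at this stage.

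Taking the inner product with $u\,w$ and using $\<\p_tu,u\>=\frac12\p_t\rho$ and $\<v\cdot\n u,u\>=\frac12 v\cdot\n\rho$, we get
\[\frac14\frac{d}{dt}\int_\Om w^2dx+\frac12\int_\Om w\,v\cdot\n\rho\,dx=\ep\int_\Om\<\De u,u\>w\,dx.\]
The transport term equals $\frac14\int_\Om v\cdot\n(w^2)dx$. It vanishes after integrating by parts, using $\mbox{div}(v)=0$ and $\<v,\nu\>|_{\p\Om}=0$. For the right-hand side we use $\<\De u,u\>=\frac12\De\rho-|\n u|^2$ and integrate by parts. The Neumann condition $\frac{\p u}{\p\nu}=0$ gives $\frac{\p\rho}{\p\nu}=0$, so no boundary term appears, and we obtain
\[\ep\int_\Om\<\De u,u\>w\,dx=-\frac{\ep}{2}\int_\Om|\n w|^2dx-\ep\int_\Om|\n u|^2w\,dx\le0.\]
Therefore $\frac{d}{dt}\int_\Om w^2dx\le 0$. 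Since $|u_0|=1$ we have $w(0)=0$, so $w\equiv0$ and $|u|\le1$ a.e.

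The main obstacle is rigor rather than algebra: justifying the chain rule and the time derivative $\frac{d}{dt}\int w^2$ for this regularity class, and justifying the integrations by parts at this level of regularity.

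\begin{itemize}
\item For the time derivative, $u\in L^2H^2$ and $\p_tu\in L^2L^2$ give $u\in C^0([0,T],H^1)$ by Lemma \ref{C^0-em}. In dimension $m\le3$ the product $uw$ is fine via Sobolev embedding; in general dimensions I would instead use the truncated test function $u\,\min\{w,k\}$, or a smooth convex approximation $\varphi_\delta$ of $(s-1)^+$ with bounded derivative, and then pass $k\to\infty$ or $\delta\to0$.
\item For the transport term, the boundary integral $\int_{\p\Om}w^2\,v\cdot\nu$ vanishes by the hypothesis $\<v,\nu\>|_{\p\Om}=0$. If needed, this can also be done by density of smooth functions, since $v\in L^\infty$ and $\n v\in L^\infty$.
\end{itemize}

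Working with $\varphi_\delta(\rho)$, where $\varphi_\delta$ is convex with $\varphi_\delta'\ge0$ bounded, the same computation gives $\frac{d}{dt}\int\varphi_\delta(\rho)\le0$ and avoids all integrability issues.
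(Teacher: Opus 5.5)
Your argument is correct and is essentially the paper's proof. The paper tests with $(|u^\ep|-1)^+u^\ep/|u^\ep|$ and shows $\frac{d}{dt}\int_\Om [(|u^\ep|-1)^+]^2dx\le 0$, using the same three ingredients as you: $J(u^\ep)\times\De u^\ep$ is orthogonal to $u^\ep$, the transport term vanishes by $\mbox{div}(v)=0$ and $\<v,\nu\>=0$, and the $\ep\De u^\ep$ term has a sign. The only real difference is the truncation: the paper's $(|u|-1)^+$ gives a test function of linear growth whose gradient is pointwise controlled by $|\n u^\ep|$, so it is admissible in every dimension without the $\varphi_\delta$ regularization you need for your cubic test function $u\,(|u|^2-1)^+$ (the paper itself does not discuss these justifications).
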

\begin{proof}
	By using the equation \eqref{eq-app}, we apply a simple computation to show that
	\begin{equation}\label{eq-poin-es}
		\begin{aligned}
			\frac{1}{2}\frac{\p}{\p t}\int_{|u^\ep|>1}(|u^\ep|-1)^2dx=&\int_{\Om}\<(|u^\ep|-1)^+, \p_t (|u^\ep|-1)^+\>dx\\
			=&\int_{|u^\ep|>1}\<|u^\ep|-1, \frac{\<\p_t u^\ep, u^\ep\>}{|u^\ep|}\>dx\\
			=&\int_{|u^\ep|>1}\<\p_t u^\ep,\frac{(|u^\ep|-1)u^\ep}{|u^\ep|}\>dx\\	
			=&-\int_{|u^\ep|>1}\<\n_vu^\ep,\frac{(|u^\ep|-1)u^\ep}{|u^\ep|}\>dx\\
			&+\ep\int_{|u^\ep|>1}\<\De u_\ep,\frac{(|u^\ep|-1)u^\ep}{|u^\ep|}\>dx=J_1+J_2.
		\end{aligned}
	\end{equation}
	
	To proceed, we need to show the precise formula of $J_1$ and $J_2$ respectively. By applying integration by parts, we can show
	\begin{align*}
		J_1=&-\int_{\Om}v\cdot \frac{\<\n u^\ep, u^\ep\>}{|u^\ep|}(|u^\ep|-1)^+dx\\
		=&-\int_{\Om}v\cdot \n(|u^\ep|-1)^+(|u^\ep|-1)^+dx\\
		=&-\frac{1}{2}\int_{\Om}v\cdot \n[(|u^\ep|-1)^+]^2dx=0,
	\end{align*}
	since $\mbox{div}(v)=0$ and $\<v,\nu\>_{\p\Om}=0$.
	
	Next, we can utilize a similar argument as that for $J_1$ to get the precise formula of $J_2$ as follows.
	\begin{align*}
		J_2=&\ep\int_{|u^\ep|>1}\<\De u_\ep,\frac{(|u^\ep|-1)u^\ep}{|u^\ep|}\>dx\\
		=&-\ep\int_{|u^\ep|>1}\frac{\<\n u^\ep,u^\ep\>^2}{|u^\ep|^3}dx\\
		&-\ep\int_{|u^\ep|>1}\frac{(|u^\ep|-1)|\n u^\ep|^2}{|u^\ep|}dx.
	\end{align*}
	
	By substituting the equations for $J_1$ and $J_2$ into \eqref{eq-poin-es}, we have
	\[\frac{\p}{\p t}\int_{|u^\ep|>1}(|u^\ep|-1)^2dx\leq 0.\]
	This means that the following function
	\begin{equation*}
		q(t)=\int_{|u^\ep|>1}(|u^\ep|-1)^2dx
	\end{equation*}
	is decreasing non-negative function. Noting $|u_0|=1$, i.e. $q(0)=0$, we get that $q(t)\equiv0$ for any $t> 0$. Therefore, we have $|u^\varepsilon|\leq 1$ a.e. $(x,t)\in \Omega\times[0,T]$.
\end{proof}

Using the above lemma, we have $|u^\varepsilon|\leq 1$ a.e. $(x,t)\in\Omega \times(0,\infty)$. Hence $u^\varepsilon$ is a strong solution of the following equation
\begin{equation}\label{eq-app1}
\begin{cases}
\partial_t u^\varepsilon+ v\cdot \nabla u^\varepsilon=\varepsilon\Delta u^\varepsilon+u^\varepsilon\times \Delta u^\varepsilon, &(x,t)\in \Om\times(0,T)\\[1ex]
\frac{\partial u^\varepsilon}{\partial \nu}|_{\partial \Omega\times (0,T)}=0,\\[1ex]
u^\varepsilon(x,0)=u_0:\Om \to \U^2,
\end{cases}
\end{equation}
where $v$ satisfies that $\mbox{div}(v)=0$ in $\Om\times (0,\infty)$ and $\<v,\nu\>=0$ on $\p\Om\times (0,\infty)$.

Then we can get the following uniform energy estimates for $u^\ep$ with respect to $\ep$.
\begin{lem}\label{es-u-ep}
For the solution $u^\ep$, the following properties hold true.
\begin{itemize}
\item[$(1)$] For any $T<\infty$, there holds a priori estimate for $u^\ep$:
\begin{equation}\label{eq4.17}
	\sup_{0\leq t\leq T} \|u^\varepsilon\|_{H^1(\Omega)}^2\leq \exp(I(T))\int_\Omega |\nabla u_0|^2dx+\int_{\Om}|u_0|^2dx.
\end{equation}

\item[$(2)$] For any $T<\infty$, $\p_tu^\ep$ satisfies 
\begin{equation}\label{es-ptu}
\norm{\p_tu^\ep}_{L^2([0,T], H^{-1}(\Om))}\leq (2T^{1/2}\exp(1/2I(T))\norm{\n u_0}_{L^2}+S^{1/2}(T)\norm{u_0}_{L^2}).
\end{equation}
\end{itemize}
\end{lem}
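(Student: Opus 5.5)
Part (1) follows by passing to the limit $n\to\infty$ in the Galerkin bounds, using weak lower semicontinuity. By Lemma \ref{lm5}, $\sup_t\int_\Om|u^\ep_n|^2\le\int_\Om|u_0|^2$. By Lemma \ref{lm6}, $\sup_t\int_\Om|\n u^\ep_n|^2\le \exp(I(T))\int_\Om|\n u_0|^2$. Both bounds are uniform in $n$ and $\ep$. Since $u^\ep_n\to u^\ep$ strongly in $C^0([0,T],H^1(\Om))$ (Theorem \ref{lm9}), or at least weakly-$*$ in $L^\infty([0,T],H^1)$, the bounds persist for $u^\ep$ at every (or a.e.) $t$. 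Adding them gives \eqref{eq4.17}.

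Alternatively, one can redo the energy identity directly on \eqref{eq-app1}. Testing with $u^\ep$, the transport term vanishes because $\mbox{div}\, v=0$ and $\<v,\nu\>=0$, and the cross-product term vanishes by orthogonality. Testing with $-\De u^\ep$ leaves only $\int\p_kv_i\,\p_iu\,\p_ku\le\norm{\n v}_{L^\infty}\norm{\n u}^2_{L^2}$; Gronwall then gives the factor $\exp(I(T))$.

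For (2), I use the equation in divergence form. Since $|u^\ep|\le1$ by Lemma \ref{lm10}, we have $J(u^\ep)=u^\ep$, and thus $u^\ep\times\De u^\ep=\mbox{div}(u^\ep\times\n u^\ep)$. Also $v\cdot\n u^\ep=\mbox{div}(v\otimes u^\ep)$ because $\mbox{div}\, v=0$. So
\[\p_tu^\ep=\mbox{div}\big(\ep\n u^\ep+u^\ep\times\n u^\ep-u^\ep\otimes v\big).\]
Pair this with $\varphi\in H^1_0$ (or $H^1$, using the Neumann condition and $\<v,\nu\>=0$, so no boundary terms appear) and integrate by parts:
\[|\<\p_tu^\ep,\varphi\>|\le\big(\ep\norm{\n u^\ep}_{L^2}+\norm{u^\ep}_{L^\infty}\norm{\n u^\ep}_{L^2}+\norm{v}_{L^\infty}\norm{u^\ep}_{L^2}\big)\norm{\n\varphi}_{L^2}.\]
Using $\ep<1$, $|u^\ep|\le1$, and part (1) with Lemma \ref{lm5}, this gives
\[\norm{\p_tu^\ep}_{H^{-1}}\le 2\exp(\tfrac12I(T))\norm{\n u_0}_{L^2}+\norm{v}_{L^\infty}\norm{u_0}_{L^2}.\]
Taking the $L^2$ norm in time, Minkowski's inequality gives $2T^{1/2}\exp(\frac12 I(T))\norm{\n u_0}_{L^2}+S^{1/2}(T)\norm{u_0}_{L^2}$, which is \eqref{es-ptu}.

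The main point needing care is the rewriting of the transport term. It must be handled in divergence form, not bounded directly, because $\norm{v}_{L^\infty}\norm{\n u}_{L^2}$ would give the wrong constant. One also needs $|u^\ep|\le1$ from Lemma \ref{lm10} both for $J(u^\ep)=u^\ep$ and for $\norm{u^\ep\times\n u^\ep}_{L^2}\le\norm{\n u^\ep}_{L^2}$.
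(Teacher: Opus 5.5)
Your proposal is correct and follows the paper's proof. Part (1) comes from lower semicontinuity of the Galerkin bounds \eqref{eq4.1} and \eqref{eq4.2}. Part (2) uses $|u^\ep|\le 1$ to put the cross-product term in divergence form, and moves the transport term onto the test function using $\mbox{div}(v)=0$ and $\<v,\nu\>|_{\p\Om}=0$. The only difference is cosmetic: you bound the $H^{-1}$ norm pointwise in time and then apply Minkowski, whereas the paper tests directly against space-time $\varphi$ and bounds by $\norm{\varphi}_{L^2([0,T],H^1(\Om))}$.
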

\begin{proof}
The first estimate \eqref{eq4.17} is obtained directly by apply the lower semicontinuity of \eqref{eq4.1} and \eqref{eq4.2} respectively, when $n\to \infty$.

Next, we show the uniform estimate \eqref{es-ptu} of $\p_t u^\ep$. For any $\varphi\in C^\infty(\bar{\Om}\times [0,T])$, a simple calculation gives
\begin{align*}
&|\int_{0}^T\int_{\Om}\<\p_t u^\ep, \varphi\>dxdt|\\
\leq &|\int_{0}^T\int_{\Om}\<\ep\n  u^\ep+u^\ep\times \n u^\ep, \n\varphi\>dxdt|+|\int_{0}^T\int_{\Om}\<v\cdot \n u^\ep, \varphi\>dxdt|\\
\leq &(1+\ep^2) T^{1/2}\sup_{0<t<T}\norm{\n u^\ep}_{L^2}\norm{\varphi}_{L^2([0,T],H^1(\Om))}+|\int_{0}^T\int_{\Om}\<v\cdot \n \varphi, u^\ep\>dxdt|\\
\leq & \((1+\ep^2) T^{1/2}\sup_{0<t<T}\norm{\n u^\ep}_{L^2}+S^{1/2}(T)\norm{u_0}_{L^2}\)\norm{\varphi}_{L^2([0,T],H^1(\Om))}\\
\leq &(2T^{1/2}\exp(1/2I(T))\norm{\n u_0}_{L^2}+S^{1/2}(T)\norm{u_0}_{L^2})\norm{\varphi}_{L^2([0,T],H^1(\Om))}.
\end{align*}

Therefore, the desired estimate for $\p_t u^\ep$ can be derived from the above formula directly.
\end{proof}

Next, we will prove the main theorem \ref{th2}.
\begin{proof}[\bf{The proof of the Theorem \ref{th2}}]
 The proof is divided into two steps.
 
\medskip
\noindent\emph{Step 1: The convergence of $u^\ep$ and the limiting map.}\ 
 
For any $T<\infty$, Lemma \ref{es-u-ep} implies that 
\begin{equation}\label{eq4.18}
\norm{u^\varepsilon}_{L^\infty([0,T],H^1(\Omega))}+ \norm{\p_tu^\varepsilon}_{L^{2}([0,T],H^{-1}(\Omega))}\leq C,
\end{equation}
for some constant $C$ independent of $\ep$.
	
Then, there exists a $u\in L^\infty([0,T],H^1(\Omega))$ such that
\begin{align*}
&u^\varepsilon \rightharpoonup u,~~~ weakly^\ast ~in ~~L^\infty([0,T],H^1(\Omega)),~~ as ~\varepsilon\rightarrow0,\\
&\n u^\varepsilon \rightharpoonup u,~~~ weakly
 ~in ~~L^2([0,T],L^2(\Omega)),~~ as ~\varepsilon\rightarrow0.
\end{align*}
Let $X=H^1(\Omega)$, $B=L^2(\Omega)$, $Y=H^{-1}(\Omega)$ in Aubin-Lions-Simon compact lemma \ref{A-S}, we have
\begin{equation*}
u^\varepsilon \rightarrow u,~~strongly~~in ~~C^0([0,T],L^2(\Omega)),
\end{equation*}
Moreover, we have
\begin{equation*}
u^\varepsilon \rightarrow u,~~ a.e. ~(x,t)\in \Omega\times[0,T].
\end{equation*}

We then show that the limiting map $u$ satisfies $|u|=1$. Choosing $u_\varepsilon$ be a test function for \eqref{eq-app1} and using the fact that $|u_0|=1$, we know
\begin{equation*}
	\int_\Omega |u^\varepsilon|^2 dx+\varepsilon\int_0^T\int_\Omega |\nabla u^\varepsilon|^2 dxdt=\int_\Omega|u_0|^2 dx=Vol(\Omega),
\end{equation*}
for a.e. $t\in[0,T]$. As $\varepsilon\rightarrow0$, there holds that
\begin{equation*}
	\int_\Omega(|u|^2-1)dx=0,
\end{equation*}
which implies $|u|=1$ for a.e. $(x,t)\in \Omega\times [0,T]$.
	
\medskip
\noindent\emph{Step 2: The limiting map is a global weak solution to \eqref{eq-ISMF}.}\ 	

For any $\varphi \in C^\infty(\bar{\Omega}\times [0,T])$, the solution $u^\ep$ satisfies
\begin{equation*}
\begin{split}
&\int_0^T\int_\Omega\left\langle\partial_t u^\varepsilon, \varphi\right\rangle dxdt+\int_0^T\int_\Omega\left\langle v\cdot \nabla u^\varepsilon, \varphi\right\rangle dxdt\\
=&\int_0^T\int_\Omega \left\langle u^\varepsilon\times \Delta u^\varepsilon, \varphi\right\rangle dxdt+\varepsilon \int_0^T\int_\Omega \left\langle\Delta u^\varepsilon,\varphi\right\rangle  dxdt.
\end{split}
\end{equation*}

Since $u^\varepsilon \rightarrow u$ strongly in $C^0([0,T],L^2(\Omega))$ and $u^\ep(x,0)=u_0$, we know that
\begin{align*}
\int_0^T\int_\Omega\left\langle\partial_t u^\varepsilon, \varphi\right\rangle dxdt=&\int_{\Om}\<u^\ep, \phi\>dx(T)-\int_{\Om}\<u_0,\varphi\>(0)-\int_{0}^{T}\<u^\ep, \p_t \phi\>\\
\rightarrow&\int_\Omega\langle u, \varphi\rangle dx(T)-\int_\Omega\langle u_0, \varphi\rangle dx(0)-\int_0^T\int_\Omega\left\langle u, \partial_t\varphi\right\rangle dxdt,
\end{align*}
as $\ep\to 0$.

Using the convergence results for $u^\ep$ in step 1, we can show
\begin{align*}
\varepsilon \int_0^T\int_\Omega \left\langle\Delta u^\varepsilon,\varphi\right\rangle  dxdt=\varepsilon \int_0^T\int_\Omega \left\langle\nabla u^\varepsilon,\nabla\varphi\right\rangle  dxdt\rightarrow0,~~~as ~~\varepsilon\rightarrow 0,
\end{align*}
\begin{align*}
\int_0^T\int_\Omega\left\langle v\cdot \nabla u^\varepsilon, \varphi\right\rangle dxdt=&-\int_0^T\int_\Omega\<u^\ep, v\cdot \n \varphi\>dxdt\\
\rightarrow &-\int_0^T\int_\Omega\left\langle u, v\cdot \nabla \varphi \right\rangle dxdt\\
=&\int_0^T\int_\Omega\left\langle v\cdot \nabla u, \varphi\right\rangle dxdt,~~~as ~~\varepsilon\rightarrow0,
\end{align*}
and
\begin{align*}
\int_0^T\int_\Omega \left\langle u^\varepsilon\times \Delta u^\varepsilon, \varphi\right\rangle dxdt=&-\int_0^T\int_\Omega \left\langle u^\varepsilon\times \n u^\varepsilon, \n \varphi\right\rangle dxdt\\
\rightarrow& -\int_0^T\int_\Omega \left\langle u\times \nabla u, \nabla \varphi\right\rangle dxdt,~~~as ~~\varepsilon\rightarrow0.
\end{align*}

To summarize the above arguments, we conclude that the limiting map $u$ satisfies the following equation
\begin{equation*}
\begin{split}
&\int_\Omega\langle u, \varphi\rangle dx(T)-\int_\Omega\langle u_0, \varphi\rangle dx(0)-\int_0^T\int_\Omega\left\langle u, \partial_t\varphi\right\rangle dxdt\\
=&-\int_0^T\int_\Omega\left\langle v\cdot \nabla u, \varphi\right\rangle dxdt-\int_0^T\int_\Omega \left\langle u\times \nabla u, \nabla \varphi\right\rangle dxdt
\end{split}
\end{equation*}
for any $\phi\in C^\infty(\bar{\Om}\times [0,T])$ and any $T<\infty$. By the similar argument with that in Theorem \ref{lm9}, we can prove that
\begin{equation*}
	\frac{\partial u}{\partial \nu}=0,~~~~ (x,t)\in \partial\Omega \times[0,T],
\end{equation*}
in the sense of distribution. 

Therefore we complete the proof of the theorem.
\end{proof}

\section{Local regular solutions}\label{s: reg-solu}

\subsection{Local regular solution to parabolic perturbed equation}
In this subsection, we consider the following initial-Neumann boundary value problem of the approximation equation for the incompressible Schr\"odinger flow \eqref{eq-ISMF}
\begin{equation}\label{eq-AISMF}
\begin{cases}
\p_tu=\ep\tau_v(u)+u\times\tau_v(u),\quad\quad&\text{(x,t)}\in\Om\times \Real^+,\\[1ex]
\frac{\p u}{\p \nu}=0, &\text{(x,t)}\in\p\Om\times \Real^+,\\[1ex]
u(x,0)=u_0: \Om\to \U^2,
\end{cases}
\end{equation}
where $\Om$ is a bounded smooth domain in $\Real^3$, and we set
\[\tau_v(u)=\tau(u)+u\times\n_v u=\De u+|\n u|^2u+u\times\n_vu.\]

Assume that $u_0\in H^3(\Om, \U^2)$ with $\frac{\p u_0}{\p \nu}|_{\p\Om}=0$, we recall that the local existence of regular solutions
to \eqref{eq-AISMF} has been established in \cite{CW2} (also see \cite{CJ}), which can be presented as follows.
\begin{thm}\label{mth1}
Let $\Om$ be a smooth bounded domain in $\Real^3$. Let $u_0\in H^{3}(\Om)$ satisfy the compatibility condition:
	\[\frac{\p u_0}{\p \nu}|_{\p \Om}=0.\]
Suppose that $v\in L^\infty(\Real^+,W^{1,3}(\Om))\cap C^0(\Real^+, H^1(\Om))$, $\p_t v\in L^2(\Real^+, H^1(\Om))$, $\textnormal{\mbox{div}}(v)=0$ inside $\Om$ for any $t\in\Real^+$ and $\<v,\nu\>|_{\p\Om\times \Real^+}=0$. Then there exists a constant $T_\ep>0$ depending only on the $\ep$, $\norm{u_0}_{H^2(\Om)}$ and $\norm{v}_{L^\infty(\Real^+,W^{1,3}(\Om))}$ such that \eqref{eq-AISMF} admits a unique local solution $u_\ep$, for any $T<T_\ep$ which satisfies
\begin{equation}\label{es-app-sol}
\p^i_t u_\ep\in C^0([0,T], H^{3-2i}(\Om))\cap L^2([0,T], H^{4-2i}(\Om))
\end{equation}
for $i=0,1$.
\end{thm}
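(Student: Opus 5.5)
The plan is to prove Theorem \ref{mth1} with the standard Galerkin/energy scheme for quasilinear parabolic systems with Neumann data, as in \cite{CJ,CW2}. For fixed $\ep>0$ the equation
\[\p_tu=\ep\tau_v(u)+u\times\tau_v(u)\]
has principal part $\ep\De u+u\times\De u$. This operator is strongly parabolic, since $\<(\ep I+u\times)\xi,\xi\>=\ep|\xi|^2$. To avoid the degeneracy of the constraint $|u|=1$ inside the scheme, we first solve the extended problem in which $u\times$ is replaced by $J(u)\times$ with a smooth cutoff $J$, and $|\n u|^2u$ by the same expression with $u$ cut off. We recover $|u|=1$ afterwards.

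\textbf{Galerkin approximation and first a priori bound.} Expand in the Neumann eigenfunctions $\{f_i\}$ of $\De-I$, exactly as in Section \ref{s: w-sol}, to get approximate solutions $u_n$ solving a locally Lipschitz ODE system. Testing the equation with $\De^2u_n$, which is admissible because $\p_\nu f_i=0$, and using Lemma \ref{eq-norm}, we obtain
\[\frac{d}{dt}\norm{\De u_n}^2_{H^1}+\ep\norm{\De u_n}^2_{H^2}\le C(\ep)\,F\big(\norm{u_n}_{H^3},\norm{v}_{W^{1,3}}\big).\]
Here $F$ is a polynomial. The lower-order terms $|\n u|^2u$, $u\times\n_vu$ and their second derivatives are absorbed using the three-dimensional embeddings $H^2\hookrightarrow L^\infty$ and $H^1\hookrightarrow L^6$, together with $v\in W^{1,3}\hookrightarrow L^p$ for all $p<\infty$. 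The top-order cross term $\int\<\De(u\times\De u),\De^2 u\>$ is treated by integrating by parts once more and keeping the $\ep$ dissipation. Lemma \ref{Gron-inq} then gives a time $T_\ep$, depending on $\ep$, $\norm{u_0}_{H^3}$ and $\norm{v}_{L^\infty W^{1,3}}$, on which $u_n$ is bounded uniformly in $n$ in $L^\infty H^3\cap L^2H^4$.

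\textbf{Bound on $\p_tu_n$, passage to the limit, and uniqueness.} Next we bound $\p_tu_n$ in $L^\infty H^1\cap L^2H^2$. We either read this off the equation, or differentiate the equation in $t$ and test with $-\De\p_tu_n$; the latter uses $\p_tv\in L^2H^1$. With these bounds, Lemma \ref{A-S} yields strong convergence, and we pass to the limit as in Theorem \ref{lm9}. Lemma \ref{C^0-em} gives the continuity in time stated in \eqref{es-app-sol}. Uniqueness follows from an $H^1$ energy estimate for the difference of two solutions, since both are bounded in $L^\infty$ together with their gradients.

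\textbf{Recovering $|u|=1$.} Set $w=|u|^2-1$. Using $\mathrm{div}\,v=0$ and $\<v,\nu\>=0$, a maximum-principle computation in the spirit of Lemma \ref{lm10} shows that $w$ solves a linear parabolic equation with Neumann data and $w(0)=0$. Hence $w\equiv0$, so the cutoffs are inactive and $u_\ep$ solves \eqref{eq-AISMF}.

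\textbf{Main obstacle.} The hardest step is the $H^3$ estimate. Its difficulty is justifying the boundary terms: $\De u_n$ does not automatically satisfy a Neumann condition, so integrations by parts at the $\De^2$ level can leave boundary contributions. The first line of attack is to work with the quantity $\norm{u}_{L^2}+\norm{\De u}_{H^1}$ and test with $\p_t\De u$. When this is done on the projected equation, the dangerous boundary integrals cancel by construction. If that is insufficient, the fallback is to estimate $\p_tu$ in $H^1$ via the time-differentiated equation, which needs only the Neumann condition on $\p_tu$. We then recover $\norm{u}_{H^3}$ through elliptic regularity (Lemma \ref{eq-norm}) applied to
\[\ep\De u+u\times\De u=\p_tu-\text{lower order terms},\]
which is invertible for $\ep>0$.
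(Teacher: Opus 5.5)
\textbf{There is a genuine gap in your main $H^3$ step.} Testing the projected equation with $\De^2u_n$ gives only
\[
\frac{d}{dt}\norm{\De u_n}^2_{L^2}+\ep\norm{\n\De u_n}^2_{L^2}.
\]
That part is fine: the nonlinearity is differentiated once, and $\p_\nu\De u_n=0$ kills the boundary term.

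To reach $L^\infty H^3\cap L^2H^4$ you must test with $-\De^3u_n$. Let $N$ be the lower-order part of the nonlinearity; for $|u|=1$,
\[
N(u)=\ep|\n u|^2u+\ep u\times\n_vu-\n_vu .
\]
The top-order term is harmless, since $\p_\nu(u_n\times\De u_n)=0$ on $\p\Om$. But moving two derivatives onto $N(u_n)$ leaves
\[
\int_{\p\Om}\<\p_\nu N(u_n),\De^2u_n\>\,d\sigma .
\]

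This integral does not ``cancel by construction''. The projection $P_n$ removes boundary terms only for functions lying in $H_n$, and $N(u_n)\notin H_n$. On $\p\Om$ one has $\p_\nu|\n u|^2=-2\,\mathrm{II}(\n_Tu,\n_Tu)$, where $\mathrm{II}$ is the second fundamental form, and $\p_\nu(\n_vu)$ contains $\n_{\p_\nu v}u$. Neither vanishes in general. The integral also involves a trace of fourth derivatives, which the dissipation $\ep\norm{\De^2u_n}^2$ cannot control.

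In fact such a uniform bound is impossible in general. Every $u_n$ satisfies $\p_\nu\De u_n=0$, and a bound on $\De u_n$ in $L^2H^2$ would pass this condition to the weak limit. The true solution, however, satisfies
\[
(\ep+u\times)\,\p_\nu\De u=-\p_\nu N(u)\quad\text{on }\p\Om,
\]
which is generically nonzero. So your diagnosis is reversed: in the Galerkin space $\De u_n$ does satisfy the Neumann condition, and that is exactly what the solution fails to do.

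\textbf{Your fallback is the right argument and should be the main one.} The paper gives no proof of Theorem \ref{mth1}; it only cites \cite{CW2,CJ}. The mechanism you describe is the one the paper itself uses in Section \ref{s: reg-solu}: time derivatives inherit the Neumann condition, spatial derivatives do not, and Lemma \ref{Equ-norm} converts back. Concretely:
\begin{enumerate}
\item Differentiate the projected equation in $t$ and test with $\p_tu_n$ and with $-\De\p_tu_n\in H_n$. Only $\p_\nu\p_tu_n=0$ is used. The top-order term drops out because $\int\<u_n\times\De\p_tu_n,\De\p_tu_n\>=0$. Everything else is absorbed by $\ep\norm{\De\p_tu_n}^2$, using $v\in W^{1,3}$ and $\p_tv\in L^2H^1$.
\item Recover $\norm{\n\De u_n}$ by pairing the projected equation with $-\De^2u_n$. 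This needs only one integration by parts on the nonlinearity, and the boundary term vanishes because $\p_\nu\De u_n=0$.
\item The initial bound on $\norm{\p_tu_n(0)}_{H^1}$ uses $P_nu_0\to u_0$ in $H^3$. This is where the compatibility condition $\p_\nu u_0=0$ enters.
\item Close with Lemma \ref{Gron-inq}.
\end{enumerate}
This gives $u\in L^\infty H^3$ and $\p_tu\in L^\infty H^1\cap L^2H^2$. Continuity then follows from Lemma \ref{C^0-em} applied to $\p_tu$, together with the elliptic identity and $v\in C^0H^1$.

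\textbf{Neither route gives $u\in L^2H^4$.} Since $\p_tu\in L^2H^2$, the equation forces $\n_vu\in L^2H^2$ whenever $u\in L^2H^4$. That requires control of $\n^2v$, which the hypotheses do not provide. This part of \eqref{es-app-sol} appears to need extra spatial regularity of $v$, and the paper does not supply it either.

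\textbf{Your normalization step is fine, and simpler than you make it.} The function $w=|u|^2-1$ satisfies
\[
\p_tw=\ep\De w+2\ep|\n u|^2w,\qquad \p_\nu w=0,\qquad w(0)=0 .
\]
This uses neither $\mathrm{div}\,v=0$ nor any cutoff.
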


We then use the solution $u_\ep$ to \eqref{eq-AISMF} that we have obtained in Theorem \ref{mth1} to approximate a regular solution to \eqref{eq-ISMF}. The key point of this progress is to show uniform $W^{3,2}$-energy estimates for $u_\ep$ with respect to $\ep$. To this end, we need to demonstrate some crucial properties for the approximation solution $u_\ep$, which are stated as the following lemmas.

\begin{lem}\label{Equ-norm}
Under the same assumption as that given in Theorem \ref{mth1}, the solution $u_\ep$ satisfies the following properties.
\begin{itemize}
\item[$(1)$] For a.e. $(x,t)\in \Om\times[0,T_\ep)$, we have
\begin{equation}\label{form-1}
\De u_\ep=\frac{1}{1+\ep^2}(\ep \p_t u_\ep-u_\ep\times \p_t u_\ep)-|\n u_\ep|^2u_\ep-u_\ep\times\n_vu_\ep.
\end{equation}
\item[$(2)$] There exists a constant $C$ independent of $\ep$ such that there holds
\begin{equation}\label{equ-n}
\norm{u_\ep}^2_{H^3}\leq C(1+\norm{u_\ep}^2_{H^2}+\norm{\frac{\p u_\ep}{\p t}}^2_{H^1}+\norm{v}^2_{W^{1,3}})^3.
\end{equation}
\end{itemize}
\end{lem}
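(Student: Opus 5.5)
For part (1), I will invert the algebraic relation $\p_tu_\ep=\ep\tau_v(u_\ep)+u_\ep\times\tau_v(u_\ep)$ pointwise. Since $|u_\ep|=1$ and $\tau_v(u_\ep)\in T_{u_\ep}\U^2$, we have $u_\ep\times(u_\ep\times w)=-w$ for every tangent vector $w$. Applying $u_\ep\times$ to the equation gives $u_\ep\times\p_tu_\ep=\ep u_\ep\times\tau_v(u_\ep)-\tau_v(u_\ep)$. Taking $\ep\p_tu_\ep-u_\ep\times\p_tu_\ep$, the $u_\ep\times\tau_v$ terms cancel and we obtain $(1+\ep^2)\tau_v(u_\ep)$. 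Recalling $\tau_v(u)=\De u+|\n u|^2u+u\times\n_vu$ then yields \eqref{form-1}.

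One point must be checked: that $u_\ep\times\n_vu_\ep$ and $\tau(u_\ep)$ are tangent. This holds because $\<u_\ep,\n u_\ep\>=0$ and $\<\De u_\ep,u_\ep\>=-|\n u_\ep|^2$.

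For part (2), I will use Lemma \ref{eq-norm} with $k=1$, which requires only the Neumann condition: $\norm{u_\ep}_{H^3}\le C(\norm{u_\ep}_{L^2}+\norm{\De u_\ep}_{H^1})$. Since $\norm{u_\ep}_{L^2}=|\Om|^{1/2}$, it suffices to bound $\norm{\De u_\ep}_{H^1}$ using \eqref{form-1}, with constants uniform in $\ep\in(0,1)$. The factors $\frac{1}{1+\ep^2}$ and $\frac{\ep}{1+\ep^2}$ are bounded by $1$.

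\textbf{Bounding each term of \eqref{form-1} in $H^1$.}
\begin{itemize}
\item $\n(u_\ep\times\p_tu_\ep)=\n u_\ep\times\p_tu_\ep+u_\ep\times\n\p_tu_\ep$. Its $L^2$ norm is at most $\norm{\n u_\ep}_{L^4}\norm{\p_tu_\ep}_{L^4}+\norm{\p_tu_\ep}_{H^1}\le C(1+\norm{u_\ep}_{H^2})\norm{\p_tu_\ep}_{H^1}$, by the Sobolev embedding in dimension three.
\item $\n(|\n u_\ep|^2u_\ep)$ is controlled by $\norm{\n u_\ep}_{L^6}^3+\norm{\n u_\ep}_{L^\infty}\norm{\n^2u_\ep}_{L^2}$. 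The cubic piece is bounded by $C\norm{u_\ep}_{H^2}^3$.
\item For the convection term, $\n(u_\ep\times\n_vu_\ep)$ involves $|\n u_\ep|^2|v|$, $|\n v||\n u_\ep|$ and $|v||\n^2u_\ep|$. I bound these using $\norm{v}_{L^\infty}$-free estimates: $\norm{v}_{L^6}\norm{\n u_\ep}_{L^6}^2\cdot$(an $L^6$–$L^6$–$L^6$ split), $\norm{\n v}_{L^3}\norm{\n u_\ep}_{L^6}$, and $\norm{v}_{L^6}\norm{\n^2u_\ep}_{L^3}$.
\end{itemize}

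The terms $\norm{\n u_\ep}_{L^\infty}$ and $\norm{\n^2u_\ep}_{L^3}$ involve $H^3$ quantities. I handle them by Gagliardo–Nirenberg interpolation: $\norm{\n^2u_\ep}_{L^3}\le C\norm{u_\ep}_{H^2}^{1/2}\norm{u_\ep}_{H^3}^{1/2}$, and $\norm{\n u_\ep}_{L^\infty}\le C\norm{u_\ep}_{H^2}^{1/2}\norm{u_\ep}_{H^3}^{1/2}$. Young's inequality then absorbs $\delta\norm{u_\ep}_{H^3}$ into the left-hand side. For example, $\norm{u_\ep}_{H^2}^{3/2}\norm{u_\ep}_{H^3}^{1/2}\le\delta\norm{u_\ep}_{H^3}+C_\delta\norm{u_\ep}_{H^2}^3$, and similarly $\norm{v}_{W^{1,3}}\norm{u_\ep}_{H^2}^{1/2}\norm{u_\ep}_{H^3}^{1/2}\le\delta\norm{u_\ep}_{H^3}+C_\delta\norm{v}^2_{W^{1,3}}\norm{u_\ep}_{H^2}$. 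The $L^2$ part of $\De u_\ep$ is handled in the same way.

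Collecting the bounds, $\norm{u_\ep}_{H^3}$ is controlled by a polynomial of degree at most $3$ in $A=1+\norm{u_\ep}_{H^2}+\norm{\p_tu_\ep}_{H^1}+\norm{v}_{W^{1,3}}$. Squaring gives $\norm{u_\ep}_{H^3}^2\le CA^6\le C(1+\norm{u_\ep}^2_{H^2}+\norm{\p_tu_\ep}^2_{H^1}+\norm{v}^2_{W^{1,3}})^3$.

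The main obstacle is the $\n v\cdot\n u_\ep$ and $v\,\n^2u_\ep$ terms, since $v$ is only in $W^{1,3}$ and not Lipschitz. The interpolation with absorption described above is what makes the powers close at exponent $3$. Throughout, the interpolation inequalities on the bounded smooth domain are applied to the full $H^k$ norms, and all constants depend only on $\Om$.
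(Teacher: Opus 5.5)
Your proposal is correct and takes essentially the paper's route. Both arguments invert the equation pointwise, using $|u_\ep|=1$ and tangency of $\tau_v(u_\ep)$, to get \eqref{form-1}. Both then bound $\De u_\ep$ in $H^1$ term by term with H\"older and Sobolev embeddings, interpolate the top-order factors against the third-order norm and absorb them, and finish with Lemma \ref{eq-norm}.

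The only difference is cosmetic. The paper works with $\norm{\n\De u_\ep}^2_{L^2}$ and absorbs $\frac14\norm{\n\De u_\ep}^2_{L^2}$ via $\norm{\n^2u_\ep}_{L^6}\le C(\norm{u_\ep}_{H^2}+\norm{\n\De u_\ep}_{L^2})$. You instead absorb $\delta\norm{u_\ep}_{H^3}$ using Agmon/Gagliardo--Nirenberg.
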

\begin{proof}
The formula in $(1)$ is obtained directly by applying the equation
\[\p_tu=\ep\tau_v(u)+u\times\tau_v(u).\]

Then it remains to prove the inequality \eqref{equ-n} in $(2)$. By utilizing the formula in $(1)$, we have
\begin{align*}
\int_{\Om}|\De u_\ep|^2dx\leq & C\{\int_{\Om}|\p_tu_\ep|^2dx+\int_{\Om}|\n u_\ep|^4+\int_{\Om}|\n u_\ep|^2|v|^2dx\}\\
\leq &C\(\norm{\p_t u_\ep}^2_{L^2}+\norm{u_\ep}^4_{H^2}+\norm{u_\ep}^2_{H^2}\norm{v}^2_{L^3}\).
\end{align*}

On the other hand, we can apply a simple calculation to derive
\begin{align*}
\n\De u_\ep=&\frac{1}{1+\ep^2}(\ep \n\p_t u_\ep-u_\ep\times \n\p_t u_\ep)\\
&-\frac{1}{1+\ep^2}\n u_\ep\times \p_t u_\ep-2\<\n^2 u_\ep,\n u_\ep\>u_\ep\\
&-|\n u_\ep|^2\n u_\ep-\n u_\ep\times\n_vu_\ep-u_\ep\#\n u_\ep\#\n v-u_\ep\#\n^2 u_\ep\#v,
\end{align*}
from the formula \eqref{form-1}, where $\#$ denotes the linear contraction. Then by applying a similar argument with that in \cite{CW1}, we can show
\begin{align*}
\int_{\Om}|\n \De u_\ep|^2dx\leq& C\{\int_{\Om}|\n \p_tu_\ep|^2dx+\int_{\Om}|\n u_\ep|^2|\p_t u_\ep|^2dx\}\\
&+C\{\int_{\Om}|\n^2 u_\ep|^2|\n u_\ep|^2dx+\int_{\Om}|\n u_\ep|^6dx\}\\
&+C\{\int_{\Om}|\n u_\ep|^4|v|^2dx+\int_{\Om}|\n u_\ep|^2|\n v|^2dx+\int_{\Om}|\n^2 u_\ep|^2|v|^2dx\}\\
\leq& C(1+\norm{u_\ep}^2_{H^2}+\norm{\p_tu_\ep}^2_{H^1})^3+\frac{1}{4}\norm{\n \De u_\ep}^2_{L^2}+V_1+V_2+V_3.
\end{align*}
Here we have used the following Sobolev embedding
\[L^6(\Om)\hookrightarrow W^{1,2}(\Om),\]
and have applied Lemma \ref{eq-norm} and H\"older inequality to give
\begin{align*}
\int_{\Om}|\n^2 u_\ep|^2|\n u_\ep|^2dx\leq& \norm{\n u_\ep}_{L^2}\norm{\n^2u_\ep}_{L^6}\norm{\n u_\ep}^2_{L^6}\\
\leq &C\norm{u_\ep}_{H^2}(\norm{u_\ep}_{H^2}+\norm{\n \De u_\ep}_{L^2})	\\
\leq &C\norm{u_\ep}^4_{H^2}+C\norm{u_\ep}^6_{H^2}+\frac{1}{4}\norm{\n \De u_\ep}^2_{L^2}.
\end{align*}

Next, we estimate the terms on the right hand side of the above inequality as follows.
\begin{align*}
V_1=&C\int_{\Om}|\n u_\ep|^4|v|^2dx\leq C\norm{\n u_\ep}^4_{L^6}\norm{v}^2_{6}\leq C\norm{ u_\ep}^4_{H^2}\norm{v}^2_{H^1},\\
V_2=&C\int_{\Om}|\n u_\ep|^2|\n v|^2dx\leq C\norm{\n u_\ep}^2_{L^6}\norm{\n v}^2_{L^3}\leq C\norm{ u_\ep}^2_{H^2}\norm{v}^2_{W^{1,3}},\\
V_3=&C\int_{\Om}|\n^2 u_\ep|^2|v|^2dx\leq C\norm{\n^2 u_\ep}_{L^2}\norm{\n^2 u_\ep}_{L^6}\norm{v}^2_{L^6}\\
\leq &C\norm{v}_{H^1}\norm{u_\ep}_{H^2}(\norm{u_\ep}_{H^2}+\norm{\n \De u_\ep}_{L^2})\\
\leq & C\norm{v}^2_{H^1}\norm{u_\ep}^2_{H^2}+C\norm{v}^4_{H^1}\norm{u_\ep}^2_{H^2}+\frac{1}{4}\norm{\n \De u_\ep}^2_{L^2}.
\end{align*}
\end{proof}
The above estimates for $V_1$-$V_3$ gives that
\[\norm{\n \De u_\ep}^2_{L^2}\leq C(1+\norm{u_\ep}^2_{H^2}+\norm{\p_tu_\ep}^2_{H^1}+\norm{v}^2_{W^{1,3}})^3.\]

Consequently, by using Lemma \ref{eq-norm}, we can combine the estimates for $\norm{\De u_\ep}_{L^2}$ and $\norm{\n\De u_\ep}_{L^2}$ to show the desired inequality \eqref{equ-n}.

Therefore, the proof is finished.

\medskip
\begin{lem}\label{c-c}
Under the same assumption as that given in Theorem \ref{mth1}, the solution $u_\ep$ satisfies
\[\frac{\p}{\p \nu}\p_tu_\ep|_{\p\Om\times [0,T_\ep)}=0\quad\text{and}\quad \frac{\p}{\p \nu}\tau_v(u_\ep)|_{\p\Om\times[0,T_\ep)}=0\]
in the sense of trace.
\end{lem}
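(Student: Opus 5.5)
Since $|u_\ep|=1$ and $\frac{\p u_\ep}{\p\nu}=0$ on $\p\Om$, the plan is to check the two Neumann conditions term by term on the boundary. We would first settle the condition for $\p_t u_\ep$ and then obtain the one for $\tau_v(u_\ep)$ by inverting the pointwise relation between $\p_tu_\ep$ and $\tau_v(u_\ep)$.

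\emph{Step 1: the condition for $\p_t u_\ep$.} By \eqref{es-app-sol}, $u_\ep\in L^2([0,T],H^4(\Om))$ and $\p_tu_\ep\in L^2([0,T],H^2(\Om))$. Hence both $\frac{\p u_\ep}{\p\nu}$ and $\frac{\p}{\p\nu}\p_tu_\ep$ have traces in $L^2([0,T],H^{1/2}(\p\Om))$. Since $\nu$ is independent of $t$, the trace operator commutes with $\p_t$ in the distributional sense: for $\psi\in C_c^\infty(\p\Om\times(0,T))$,
\[\int_0^T\!\!\int_{\p\Om}\<\tfrac{\p}{\p\nu}\p_tu_\ep,\psi\>=-\int_0^T\!\!\int_{\p\Om}\<\tfrac{\p u_\ep}{\p\nu},\p_t\psi\>=0.\]
To justify this, we would approximate $u_\ep$ by smooth functions in $E_{2,2}$ and use the continuity of the trace $H^2(\Om)\to H^{1/2}(\p\Om)$. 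This gives $\frac{\p}{\p\nu}\p_tu_\ep=0$ on $\p\Om\times[0,T_\ep)$.

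\emph{Step 2: inverting the equation.} For $w\in T_u\U^2$ we have $u\times(u\times w)=-w$. Since $\tau_v(u_\ep)$ is tangent to the sphere, the equation $\p_tu_\ep=(\ep+u_\ep\times)\tau_v(u_\ep)$ inverts to
\[\tau_v(u_\ep)=\frac{1}{1+\ep^2}\(\ep\p_tu_\ep-u_\ep\times\p_tu_\ep\),\]
which is consistent with \eqref{form-1}. Differentiating in the normal direction on $\p\Om$, which is legitimate for $H^2$ functions with $H^{1/2}$ traces, gives
\[\frac{\p}{\p\nu}\tau_v(u_\ep)=\frac{1}{1+\ep^2}\(\ep\frac{\p\p_tu_\ep}{\p\nu}-\frac{\p u_\ep}{\p\nu}\times\p_tu_\ep-u_\ep\times\frac{\p\p_tu_\ep}{\p\nu}\).\]
Each term vanishes by Step 1 and by the Neumann condition on $u_\ep$. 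For the product term we would use that $\p_tu_\ep\in L^2H^2\subset L^2L^\infty$ in dimension $3$, so the trace of the product is the product of the traces.

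\emph{Main obstacle.} The only delicate point is the rigor of the trace manipulations at the available regularity: the commutation of $\p_t$ with the normal trace, and the product rule for traces. We would handle both with the regularity in \eqref{es-app-sol} together with a density argument.

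As a cross-check of Step 2, one could instead verify directly that $\frac{\p}{\p\nu}\big(\De u_\ep+|\n u_\ep|^2u_\ep+u_\ep\times\n_vu_\ep\big)=0$ term by term. That route requires the normal derivative of $\De u_\ep$ and of $\n_vu_\ep$ on $\p\Om$, which is harder to control since $\<v,\nu\>=0$ gives only tangential information. For this reason the inversion route is the one we would commit to.
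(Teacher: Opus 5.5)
Your proposal is correct and is essentially the paper's argument. You commute $\p_t$ with the normal trace using the regularity in \eqref{es-app-sol}: the paper does this through the weak Green-formula form of the Neumann condition plus an integration by parts in time, while you use trace continuity and density in $E_{2,2}$. You then combine the pointwise relation between $\p_tu_\ep$ and $\tau_v(u_\ep)$ with $\frac{\p u_\ep}{\p\nu}=0$, as the paper does.

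The only variation is in that last step. The paper differentiates the equation and pairs with $a=\frac{\p}{\p\nu}\tau_v(u_\ep)$, using $\<a,u_\ep\times a\>=0$ to get $\ep|a|^2=0$. You first invert to $\tau_v(u_\ep)=\frac{1}{1+\ep^2}(\ep\p_tu_\ep-u_\ep\times\p_tu_\ep)$, which has the mild advantage of showing directly that $\tau_v(u_\ep)\in L^2([0,T],H^2(\Om))$, so its normal trace is meaningful.
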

\begin{proof}
Since $u_\ep$ satisfies the Neumann boundary condition
\[\frac{\p u_\ep}{\p\nu}|_{\p\Om\times[0,T]}=0,\]
then for any $\phi\in C^\infty(\bar{\Om}\times[0,T])$ with $T<T_\ep$, there holds
\begin{align*}
\int_{0}^T\int_{\Om}\<\De u_\ep, \p_t\phi\>dxdt=-\int_{0}^T\int_{\Om}\<\n u_\ep, \n\p_t\phi\>dxdt.
\end{align*}

On the other hand, since $u_\ep\in L^2([0,T], H^4(\Om))$ and $\p_tu_\ep\in L^2([0,T], H^2(\Om))$, the embedding lemma \ref{C^0-em} implies
\[u_\ep\in C^0([0,T], H^3(\Om)),\]
which tells us that
\[\frac{\p u(x,t)}{\p \nu}|_{\p\Om}=0,\]
for any $t\in [0,T_\ep)$.

Then, by utilizing the integration by parts, we can apply a simple calculation to show
\begin{align*}
\int_{0}^T\int_{\Om}\<\De u_\ep, \p_t\phi\>dxdt=&-\int_{0}^{T}\int_{\Om}\<\p_t\De u_\ep, \phi\>dxdt+\int_{\Om}\<\De u_\ep, \phi\>dx(T)\\
	&-\int_{\Om}\<\De u_\ep, \phi\>dx(0)\\
	=&-\int_{0}^{T}\int_{\Om}\<\p_t\De u_\ep, \phi\>dxdt-\int_{\Om}\<\n u_\ep, \n\phi\>dx(T)\\
	&+\int_{\Om}\<\n u_\ep, \n\phi\>dx(0)
\end{align*}
and
\begin{align*}
-\int_{0}^{T}\int_{\Om}\<\n u_\ep, \p_t\n \phi\>dxdt
	=&\int_{0}^{T}\int_{\Om}\<\n \p_tu_\ep, \n \phi\>dxdt-\int_{\Om}\<\n u_\ep, \n\phi\>dx(T)\\
	&+\int_{\Om}\<\n u_\ep, \n\phi\>dx(0).
\end{align*}
 Then we can derive from the above two formulae that
\[\int_{0}^{T}\int_{\Om}\<\De \p_tu_\ep, \phi\>dxdt=-\int_{0}^{T}\int_{\Om}\<\n \p_tu_\ep, \n \phi\>dxdt,\]
that is,
\[\frac{\p}{\p\nu}\p_tu_\ep|_{\p\Om\times[0,T]}=0\]
in the sense of trace.

By applying the fact that $\n \tau_v(u_\ep)$ is orthogonal to $u_\ep \times \n \tau_v(u_\ep)$, the equation
\[\p_tu_\ep=\ep\tau_v(u_\ep)+u_\ep\times \tau_v(u_\ep)\]
implies $\frac{\p}{\p \nu}\tau_v(u_\ep)|_{\p\Om\times [0,T_\ep)}=0$ in the sense of trace, since $\frac{\p u_\ep}{\p \nu}|_{\p \Om \times[0,T_\ep)}=0$.
\end{proof}

\begin{lem}
Under the same assumption as that given in Theorem \ref{mth1}, the solution $u_\ep$ satisfies the following properties.
\begin{itemize}
\item[$(1)$] For a.e. $(x,t)\in \Om\times[0,T_\ep)$, there holds
\begin{equation}\label{Key1}
\p_t\p_tu_\ep=\ep\De \p_tu_\ep+u_\ep\times\De\p_tu_\ep+F+L+K,
\end{equation}
where
\begin{align*}
F=&u_\ep\times(u_\ep\times\n_v\p_tu_\ep)+\ep u_\ep\times\n_v\p_tu_\ep
+2\ep\<\n\p_tu_\ep,\n u_\ep\>u_\ep,\\
L=&\p_tu_\ep\times(\De u_\ep+u_\ep \times\n_vu_\ep)+u_\ep\times (\p_tu_\ep\times \n_vu_\ep)\\
&+\ep|\n u_\ep|^2\p_tu_\ep+\ep\p_tu_\ep\times\n_vu_\ep,\\
=&\p_tu_\ep\times(\De u_\ep+u_\ep \times\n_vu_\ep)+\ep|\n u_\ep|^2\p_tu_\ep+\ep\p_tu_\ep\times\n_vu_\ep,\\
K=&u_\ep\times(u_\ep\times\n_{\p_tv}u_\ep)+\ep u_\ep\times\n_{\p_tv}u_\ep.
\end{align*}
\item[$(2)$] For a.e. $(x,t)\in \Om\times[0,T_\ep)$, there also holds
\begin{equation}\label{Key2}
\begin{aligned}
	&\p_t\p_tu_\ep+(1-\ep^2)\De \tau_v(u_\ep)-2\ep\De(u_\ep\times \tau_v(u_\ep))\\
	=&-\ep\{2\n u_\ep\dot{\times}\n\tau_v(u_\ep)+\De u_\ep\times(|\n u_\ep|^2u_\ep+u_\ep\times\n_vu_\ep)\}\\
	&+\ep\{u_\ep\times\n_v\p_tu_\ep+|\n u_\ep|^2\p_tu_\ep+u_\ep\times\n_{\p_tv}u_\ep\}\\
	&+|\n u_\ep|^2\tau_v(u_\ep)-2\<\n u_\ep,\tau_v(u_\ep)\>\cdot\n u_\ep-\n_v\p_tu_\ep\\
	&-\n_{\p_tv}u_\ep+f_1u_\ep+\p_tu_\ep\times f_2,	
\end{aligned}
\end{equation}
where
\begin{align*}
f_1=&\<\De\tau_v(u_\ep), u_\ep\>-\<\p_tu_\ep,\n_vu_\ep\>+2\ep\<\n \p_t u_\ep, \n u_\ep\>,\\
f_2=&\De u_\ep+u_\ep\times\n_vu_\ep+\ep\n_v u_\ep.
\end{align*}
\end{itemize}
\end{lem}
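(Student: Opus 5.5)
Both identities come from differentiating the equation $\p_tu_\ep=\ep\tau_v(u_\ep)+u_\ep\times\tau_v(u_\ep)$ once in time. The computation is justified a.e. by the regularity \eqref{es-app-sol}: we have $\p_tu_\ep\in L^2H^2$, and $\p_t\p_tu_\ep$ makes sense in $L^2L^2$ after differentiating the equation, since $\p_tv\in L^2H^1$. Throughout we write $u=u_\ep$ and use the pointwise facts $|u|=1$, $\<u,\p_tu\>=0$, $\<u,\n u\>=0$ and $\<\p_t u,\tau_v(u)\>$-type orthogonalities. We also use the triple product identity $a\times(b\times c)=\<a,c\>b-\<a,b\>c$.

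For (1), compute $\p_t\tau_v(u)=\De\p_tu+2\<\n\p_tu,\n u\>u+|\n u|^2\p_tu+\p_tu\times\n_vu+u\times\n_v\p_tu+u\times\n_{\p_tv}u$. Then $\p_t\p_tu=\ep\p_t\tau_v(u)+\p_tu\times\tau_v(u)+u\times\p_t\tau_v(u)$. The leading terms give $\ep\De\p_tu+u\times\De\p_tu$. The lower-order terms are grouped as follows.
\begin{itemize}
\item[$\cdot$] Terms containing $\n_v\p_tu$ (no $\p_tv$) go into $F$.
\item[$\cdot$] Terms containing $\n_{\p_tv}u$ go into $K$.
\item[$\cdot$] Terms linear in $\p_tu$ without derivatives on it go into $L$.
\end{itemize}
Three simplifications are needed. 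First, $u\times(2\<\n\p_tu,\n u\>u)=0$ and $u\times(|\n u|^2\p_tu)=|\n u|^2u\times\p_tu$. Second, $\p_tu\times\tau_v(u)=\p_tu\times(\De u+|\n u|^2u+u\times\n_vu)$. Third, the combination $|\n u|^2(\p_tu\times u+u\times\p_tu)$ cancels. The second expression for $L$ follows from $u\times(\p_tu\times\n_vu)=\<u,\n_vu\>\p_tu-\<u,\p_tu\>\n_vu=0$. The $F$, $K$ expressions as written (with $u\times(u\times\cdot)$) should arise once one notes that $\n_v\p_tu$ and $\n_{\p_tv}u$ do not stay tangent. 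I expect one also has to trade $\ep\n_v\p_tu$ for $\ep u\times(\cdots)$ using the tangency of $\p_t\tau_v$ components. If the bookkeeping does not match verbatim, I will rewrite using $u\times(u\times w)=\<u,w\>u-w$.

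For (2), the plan is to express $\De\p_tu$ through $\tau_v$ instead: $\p_tu=\ep\tau_v+u\times\tau_v$ gives $\De\p_tu=\ep\De\tau_v+\De(u\times\tau_v)$. Then $u\times\De(u\times\tau_v)=u\times(\De u\times\tau_v+2\n u\dot\times\n\tau_v+u\times\De\tau_v)$, and $u\times(u\times\De\tau_v)=\<u,\De\tau_v\>u-\De\tau_v$. This produces $-\De\tau_v$ together with the $f_1$-contribution $\<\De\tau_v,u\>u$. Similarly, $\ep u\times\De\tau_v=\ep\De(u\times\tau_v)-\ep(\De u\times\tau_v+2\n u\dot\times\n\tau_v)$. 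Collecting gives the coefficient $(1-\ep^2)\De\tau_v$ and $2\ep\De(u\times\tau_v)$ on the left, after substituting $\ep\De(u\times\tau_v)$ once more.

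The terms $u\times(\De u\times\tau_v)$ and $u\times(2\n u\dot\times\n\tau_v)$ are then expanded by the triple product. Since $\<u,\tau_v\>=0$ and $\<u,\n\tau_v\>=-\<\n u,\tau_v\>$, and $\<u,\De u\>=-|\n u|^2$, these yield $|\n u|^2\tau_v-2\<\n u,\tau_v\>\cdot\n u$ plus multiples of $u$ absorbed into $f_1$. Finally, $F,L,K$ from part (1) are rewritten, which accounts for the remaining terms: $-\n_v\p_tu$, $-\n_{\p_tv}u$, $\p_tu\times f_2$, and the $u$-multiples $-\<\p_tu,\n_vu\>u$ and $2\ep\<\n\p_tu,\n u\>u$ in $f_1$. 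This last step uses $u\times(u\times w)=\<u,w\>u-w$ together with $\<u,\n_v\p_tu\>=-\<\n_vu,\p_tu\>$.

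The main obstacle is purely bookkeeping. The $\ep$-terms $\ep\De u\times(|\n u|^2u+u\times\n_vu)$ must be tracked to reach exactly the stated right-hand side, since $\De u\times\tau_v$ splits into $\De u\times\De u=0$ plus that remainder. The computation I would do first, carefully, is the reduction of $u\times\De(u\times\tau_v)$.
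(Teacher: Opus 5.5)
Your proposal is correct and is essentially the paper's proof: differentiate $\p_tu_\ep=\ep\tau_v(u_\ep)+u_\ep\times\tau_v(u_\ep)$ in $t$ to get \eqref{Key1}, then write $\De\p_tu_\ep=\ep\De\tau_v(u_\ep)+\De(u_\ep\times\tau_v(u_\ep))$, expand $u_\ep\times\De(u_\ep\times\tau_v(u_\ep))$ with the triple product, $\<u_\ep,\tau_v(u_\ep)\>=0$ and $\<u_\ep,\De u_\ep\>=-|\n u_\ep|^2$, and simplify $F,K$ via $u\times(u\times w)=\<u,w\>u-w$. There are two minor points. The paper drops $|\n u_\ep|^2u_\ep$ from $u_\ep\times\tau_v(u_\ep)$ before differentiating, so your $|\n u_\ep|^2$-cancellation is not needed there. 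Your hedging in part (1) is also unnecessary, because your own expansion of $u_\ep\times\p_t\tau_v(u_\ep)$ already yields the $u_\ep\times(u_\ep\times\cdot)$ terms of $F$ and $K$ verbatim, without trading any $\ep\n_v\p_tu_\ep$ term.
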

\begin{proof}
By differentiating the both sides of the equation
\[\p_tu_\ep=\ep\tau_v(u_\ep)+u_\ep\times(\De u_\ep+u_\ep\times \n_vu_\ep)\]
in the direction of $t$, where
\[\tau_v(u_\ep)=\tau(u_\ep)+u_\ep\times\n_v u_\ep=\De u_\ep+|\n u_\ep|^2u_\ep+u_\ep\times\n_vu_\ep,\]
we can show
\begin{align*}
\p_t\p_tu_\ep=&\ep\p_t\tau_v(u_\ep)+\p_tu_\ep\times(\De u_\ep+u_\ep\times \n_vu_\ep)\\
&+u_\ep\times(\De \p_tu_\ep+\p_t u_\ep\times\n_vu_\ep+u_\ep\times\n_v\p_tu_\ep+u_\ep\times\n_{\p_tv}u_\ep),
\end{align*}
where
\begin{align*}
\p_t\tau_v(u_\ep)=&\De \p_t u_\ep+|\n u_\ep|^2\p_tu_\ep+2\<\n \p_tu_\ep,\n u_\ep\>u_\ep\\
&+\p_t u_\ep\times\n_vu_\ep+u_\ep\times\n_v\p_tu_\ep+u_\ep\times\n_{\p_tv}u_\ep.
\end{align*}
Then formula \eqref{Key1} follows from the above equation directly.

Next, we intend to use the facts:
\begin{itemize}
	\item[$(1)$] $|u_\ep|=1$,
	\item[$(2)$] The Lagrangian formula: $a\times(b\times c)=\<a,c\>b-\<a,b\>c$,
	\item[$(3)$] The structure of equation: $\p_tu_\ep=\ep\tau_v(u_\ep)+u_\ep\times\tau_v(u_\ep)$,
\end{itemize}
to show that $u_\ep$ satisfies another fourth order differential formula \eqref{Key2}. To proceed, we need to obtain the precise formula of each term in the right hand side of \eqref{Key1} as follows.
\begin{align*}
\ep\De \p_tu_\ep=&\ep^2\De \tau_v(u_\ep)+\ep\De(u_\ep \times\tau_v(u_\ep)).\\
u_\ep\times\De \p_tu_\ep=&\ep u_\ep\times \De \tau_{v}(u_\ep)+u_\ep\times\De(u_\ep\times\tau_v(u_\ep))\\
=&\ep\De(u_\ep\times\tau_v(u_\ep))-2\ep\n u_\ep\dot{\times}\n\tau_v(u_\ep)-\ep\De u_\ep\times(|\n u_\ep|^2u_\ep+u_\ep\times\n_v u_\ep)\\
&+u_\ep\times(u_\ep\times\De\tau_v(u_\ep))+u_\ep\times(\De u_\ep\times\tau_v(u_\ep))+2u_\ep\times(\n u_\ep\dot{\times}\n \tau_v(u_\ep)),
\end{align*}
where
\begin{align*}
u_\ep\times(u_\ep\times\De\tau_v(u_\ep))=&-\De \tau_v(u_\ep)+\<\De \tau_v(u_\ep), u_\ep\>u_\ep,\\
u_\ep\times(\De u_\ep\times\tau_v(u_\ep))=&\<u_\ep,\tau_v(u_\ep)\>\De u_\ep-\<u_\ep,\De u_\ep\>\tau_v(u_\ep)\\
=&-\<u_\ep,\De u_\ep\>\tau_v(u_\ep)=|\n u_\ep|^2\tau_v(u_\ep),\\
2u_\ep\times(\n u_\ep\dot{\times}\n \tau_v(u_\ep))=&2\<u_\ep,\n \tau_v(u_\ep)\>\cdot\n u_\ep-2\<u_\ep,\n u_\ep\>\cdot\n \tau_v(u_\ep)\\
=&2\<u_\ep,\n \tau_v(u_\ep)\>\cdot\n u_\ep=-2\<\n u_\ep,\tau_v(u_\ep)\>\cdot\n u_\ep.
\end{align*}
Hence, we have
\begin{align*}
u_\ep\times\De \p_tu_\ep=&-\De \tau_v(u_\ep)+\ep\De(u_\ep\times\tau_v(u_\ep))\\
&-\ep\{2\n u_\ep\dot{\times}\n\tau_v(u_\ep)+\De u_\ep\times(|\n u_\ep|^2u_\ep+u_\ep\times\n_v u_\ep)\}\\
&+\<\De \tau_v(u_\ep), u_\ep\>u_\ep+|\n u_\ep|^2\tau_v(u_\ep)-2\<\n u_\ep,\tau_v(u_\ep)\>\cdot\n u_\ep.
\end{align*}

Next we obtain the more finer formulae of $F$, $L$ and $K$. A simple computation gives that
\begin{align*}
F=&u_\ep\times(u_\ep\times\n_v\p_tu_\ep)+\ep u_\ep\times\n_v\p_tu_\ep
+2\ep\<\n\p_tu_\ep,\n u_\ep\>u_\ep\\
=&-\n_v\p_tu_\ep+\<\n_v\p_tu_\ep,u_\ep\>u_\ep+\ep u_\ep\times\n_v\p_tu_\ep
+2\ep\<\n\p_tu_\ep,\n u_\ep\>u_\ep\\
=&-\n_v\p_tu_\ep-\<\p_tu_\ep,\n_vu_\ep\>u_\ep+\ep u_\ep\times\n_v\p_tu_\ep
+2\ep\<\n\p_tu_\ep,\n u_\ep\>u_\ep,
\end{align*}
since
\[\<\n_v\p_tu_\ep,u_\ep\>=-\<\p_tu_\ep,\n_vu_\ep\>;\]
\begin{align*}
L=&\p_tu_\ep\times(\De u_\ep+u_\ep \times\n_vu_\ep)+\ep|\n u_\ep|^2\p_tu_\ep+\ep\p_tu_\ep\times\n_vu_\ep,
\end{align*}
since
\[u_\ep\times (\p_tu_\ep\times \n_vu_\ep)=\<u_\ep,\n_vu_\ep\>\p_t u_\ep-\<u_\ep,\p_tu_\ep\>\n_v u_\ep=0;\]
and
\begin{align*}
	K=&-\n_{\p_tv}u_\ep+\ep u_\ep\times\n_{\p_tv}u_\ep.
\end{align*}

Therefore, by combining the above equations with \eqref{Key1}, we get the fourth order differential formula \eqref{Key2}.
\end{proof}

\subsection{$H^1$-energy estimate}
Now we are in the position to show the uniform energy estimates for $u_\ep$. First of all, we can take $u_\ep$ as a test function for \eqref{eq-AISMF} to give
\[\frac{1}{2}\frac{\p}{\p t}\int_{\Om}|u_\ep|^2dx=0.\]
Then utilizing $-\De u_\ep$ as another test function, we have
\begin{equation}
\begin{aligned}
&\frac{1}{2}\frac{\p}{\p t}\int_{\Om}|\n u_\ep|^2dx+\ep\int_{\Om}|u_\ep\times \De u_\ep|^2dx\\
=&-\ep\int_{\Om}\<u_\ep\times\n_v u_\ep,\De u_\ep\>dx+\int_{\Om}\<\n_vu_\ep, \De u_\ep\>dx\\
\leq& \frac{\ep}{2}\int_{\Om}|u_\ep\times \De u_\ep|^2dx+C\ep\norm{v}^2_{L^3}\norm{\n u_\ep}^2_{L^6}\\
&+\int_{\Om}|\n v||\n u_\ep|^2dx+|\int_{\Om}v\cdot\<\n^2u_\ep, \n u_\ep\>dx\\
\leq&\frac{\ep}{2}\int_{\Om}|u_\ep\times \De u_\ep|^2dx+C(\ep\norm{v}^2_{L^3}+\norm{\n v}_{L^3})\norm{\n u_\ep}^2_{H^1},
\end{aligned}
\end{equation}
where we have used the facts $\mbox{div}(v)=0$ with $\<v,\nu\>|_{\p\Om}=0$ to show
\[\int_{\Om}v\cdot\<\n^2u_\ep, \n u_\ep\>dx=\frac{1}{2}\int_{\Om}v\cdot\n|\n u_\ep|^2dx=0.\]

This yields that
\begin{equation}\label{es-H1-new}
\frac{\p}{\p t}\int_{\Om}|\n u_\ep|^2dx+\ep\int_{\Om}|u_\ep\times \De u_\ep|^2dx\leq C(\ep\norm{v}^2_{L^3}+\norm{\n v}_{L^3})\norm{\n u_\ep}^2_{H^1}.
\end{equation}

\subsection{$H^2$-energy estimate}
Taking $\p_t u_\ep$ as a text function for formula \eqref{Key2}, we apply a direct computation to give
\begin{equation}\label{eq-H2}
\begin{aligned}
&\frac{1}{2}\frac{\p}{\p t}\int_{\Om}|\p_t u_\ep|^2dx+(1-\ep^2)\int_{\Om}\<\De \tau_v(u_\ep),\p_t u_\ep\>dx-2\ep\int_{\Om}\<\De (u_\ep\times\tau_v(u_\ep)),\p_tu_\ep\>dx\\
=&-\ep\{\int_{\Om}\<2\n u_\ep\dot{\times}\n\tau_v(u_\ep)+\De u_\ep\times(|\n u_\ep|^2u_\ep+u_\ep\times\n_vu_\ep),\p_t u_\ep\>dx\}\\
&+\ep\{\int_{\Om}\<u_\ep\times\n_v\p_tu_\ep+|\n u_\ep|^2\p_tu_\ep+u_\ep\times\n_{\p_tv}u_\ep,\p_t u_\ep\>dx\}\\
&+\int_{\Om}\<|\n u_\ep|^2\tau_v(u_\ep)-2\<\n u_\ep,\tau_v(u_\ep)\>\cdot\n u_\ep-\n_v\p_tu_\ep-\n_{\p_tv}u_\ep,\p_t u_\ep\>dx\\
=&\ep(I_1+I_2+I_3+I_4+I_5+I_6)+(II_1+II_2+II_3+II_4),
\end{aligned}
\end{equation}
where we have used the facts that $\<u_\ep, \p_t u_\ep\>=0$ and $\<\p_tu_\ep\times \cdot, \p_t u_\ep\>=0$ to show
\[\int_{\Om}\<f_1u_\ep,\p_t u_\ep\>dx=0,\quad \text{and}\quad\int_{\Om}\<\p_tu_\ep\times f_2,\p_t u_\ep\>dx=0.\]

By applying Lemma \ref{c-c}, we can estimate the second term on the lift hand side of \eqref{eq-H2} as follows.
\begin{align*}
&(1-\ep^2)\int_{\Om}\<\De \tau_v(u_\ep),\p_t u_\ep\>dx\\
=&(1-\ep^2)\int_{\Om}\<\tau_v(u_\ep),\De \p_t u_\ep\>dx\\
=&\frac{1-\ep^2}{2}\frac{\p}{\p t}\int_{\Om}|\De u_\ep|^2dx-(1-\ep^2)\int_{\Om}\<\n(|\n u_\ep|^2u_\ep+u_\ep\times \n_v u_\ep), \n \p_t u_\ep\>dx\\
=&\frac{1-\ep^2}{2}\frac{\p}{\p t}\int_{\Om}|\De u_\ep|^2dx+III_1+III_2.
\end{align*}
Here,
\begin{align*}
|III_1|\leq& (1-\ep^2)\int_{\Om}2|\n^2 u_\ep||\n u_\ep||\n\p_t u_\ep|+|\n u_\ep|^3|\n\p_tu_\ep|dx\\
\leq & C\norm{\n u_\ep}_{L^6}\norm{\n^2 u_\ep}_{L^3}\norm{\n\p_t u_\ep}_{L^2}+C\norm{\n u_\ep}^3_{L^6}\norm{\n\p_t u_\ep}_{L^2}\\
\leq&C(\norm{u_\ep}_{H^2}\norm{u_\ep}_{H^3}+\norm{u_\ep^2}^3_{H^2})\norm{\n\p_t u_\ep}_{L^2}\\
\leq&C(1+\norm{u_\ep}^2_{H^2}+\norm{\p_t u_\ep}^2_{H^1}+\norm{v}^2_{W^{1,3}})^3,
\end{align*}
where we have applied Lemma \ref{eq-norm} in the last line in above estimate of $III_1$,
\begin{align*}
|III_2|=&(1-\ep^2)|\int_{\Om}\<\n(u_\ep\times \n_v u_\ep), \n \p_t u_\ep\>dx|\\
\leq&C\int_{\Om}|\n u_\ep|^2|v||\n \p_t u_\ep|+|\n u_\ep||\n v||\n\p_t u_\ep|+|v||\n^2 u_\ep||\n \p_t u_\ep|dx\\
\leq&C(\norm{\n u_\ep}^2_{L^6}\norm{v}_{L^6}+\norm{\n v}_{L^3}\norm{\n u_\ep}_{L^6}+\norm{\n^2 u_\ep}_{L^3}\norm{v}_{L^6})\norm{\n \p_t u_\ep}_{L^2}\\
\leq &C(1+\norm{u_\ep}^2_{H^2}+\norm{\p_t u_\ep}^2_{H^1}+\norm{v}^2_{W^{1,3}})^3.
\end{align*}

Since $u_\ep\times\tau_v(u_\ep)=\p_t u_\ep-\ep\tau_v(u_\ep)$, the last term on the lift hand side of \eqref{eq-H2} can be controlled in below.
\begin{align*}
-2\ep\int_{\Om}\<\De(u_\ep\times \tau_v(u_\ep)),\p_tu_\ep\>dx=&-2\ep\int_{\Om}\<u_\ep\times \tau_v(u_\ep),\De \p_tu_\ep\>dx\\
=&2\ep^2\int_{\Om}\<\tau_v(u_\ep),\De \p_tu_\ep\>dx-2\ep\int_{\Om}\<\p_t u_\ep,\De \p_tu_\ep\>dx\\
\geq &\ep^2\frac{\p}{\p t}\int_{\Om}|\De u_\ep|^2dx+2\ep\int_{\Om}|\n \p_tu_\ep|^2dx\\
&-C\ep^2(1+\norm{u_\ep}^2_{H^2}+\norm{\p_t u_\ep}^2_{H^1}+\norm{v}^2_{W^{1,3}})^3.
\end{align*}

Consequently, we can combine the above estimates to show
\begin{equation}\label{LHS}
\begin{aligned}
\text{LHS of \eqref{eq-H2}}\geq& \frac{1}{2}\frac{\p}{\p t}\int_{\Om}|\p_t u_\ep|^2dx+\frac{1+\ep^2}{2}\frac{\p}{\p t}\int_{\Om}|\De u_\ep|^2dx\\
&+2\ep\int_{\Om}|\n \p_tu_\ep|^2dx-C(1+\norm{u_\ep}^2_{H^2}+\norm{\p_t u_\ep}^2_{H^1}+\norm{v}^2_{W^{1,3}})^3.
\end{aligned}
\end{equation}

Next, we demonstrate the estimates of the ten terms on the right hand side of \eqref{eq-H2} respectively.
\begin{align*}
\ep|I_1|=&\ep|\int_{\Om}\<2\n u_\ep\dot{\times}\n\tau_v(u_\ep),\p_t u_\ep\>dx|\\
\leq&C\ep\int_{\Om}|\n u_\ep||\n \p_tu_\ep||\p_t u_\ep|+|\n u_\ep|^2|\p_t u_\ep|^2dx\\
\leq&C\ep \norm{\n u_\ep}_{L^3}\norm{\p_t u_\ep}_{L^6}\norm{\n \p_t u_\ep}_{L^2}+\ep C\norm{\n u_\ep}^2_{L^4}\norm{\p_t u_\ep}^2_{L^4}\\
\leq&C\ep(1+\norm{u_\ep}^2_{H^2}+\norm{\p_t u_\ep}^2_{H^1})^2,
\end{align*}
where we have used $\tau_v(u_\ep)=\frac{1}{1+\ep^2}(\ep\p_t u_\ep-u_\ep\times \p_t u_\ep)$.
\begin{align*}
\ep|I_2|=&\ep|\int_{\Om}\<\De u_\ep\times(|\n u_\ep|^2u_\ep),\p_t u_\ep\>dx|
\leq C\ep\norm{\De u_\ep}_{L^2}\norm{\n u_\ep}^2_{L^6}\norm{\p_t u_\ep}_{L^6}\\
\leq&C\ep(1+\norm{u_\ep}^2_{H^2}+\norm{\p_t u_\ep}^2_{H^1})^2.\\
\ep|I_3|=&\ep|\int_{\Om}\<\De u_\ep\times(u_\ep\times\n_vu_\ep),\p_t u_\ep\>dx|\\
=&\ep|\int_{\Om}\<\De u_\ep,\n_vu_\ep\>\<u_\ep,\p_t u_\ep\>-\<\De u_\ep,u_\ep\>\<\n_vu_\ep,\p_t u_\ep\>dx|\\
=&\ep|\int_{\Om}|\n u_\ep|^2\<\n_vu_\ep,\p_t u_\ep\>dx|\leq C\ep\norm{\n u_\ep}^3_{L^6}\norm{v}_{L^3}\norm{\p_tu_\ep}_{L^6}\\
\leq& C\ep(1+\norm{u_\ep}^2_{H^2}+\norm{\p_t u_\ep}^2_{H^1}+\norm{v}^2_{L^3})^3.
\end{align*}
\begin{align*}
\ep|I_4|=&\ep|\int_{\Om}\<u_\ep\times\n_v\p_tu_\ep,\p_t u_\ep\>dx|\leq \ep\norm{\n \p_t u_\ep}_{L^2}\norm{v}_{L^3}\norm{\p_tu_\ep}_{L^6}\\
\leq&C\ep(1+\norm{\p_t u_\ep}^2_{H^1}+\norm{v}^2_{L^3})^2.\\
\ep|I_5|=&\ep\int_{\Om}|\n u_\ep|^2|\p_tu_\ep|^2dx\leq \ep\norm{\n u_\ep}^2_{L^4}\norm{\p_tu_\ep}^2_{L^4}\\
\leq&C\ep(1+\norm{u_\ep}^2_{H^2}+\norm{\p_t u_\ep}^2_{H^1})^2.\\
\ep|I_6|=&\ep|\int_{\Om}\<u_\ep\times\n_{\p_tv}u_\ep,\p_t u_\ep\>dx|\leq \ep\norm{\p_t v}_{L^2}\norm{\n u_\ep}_{L^3}\norm{\p_t u_\ep}_{L^6}\\
\leq&C\ep(1+\norm{u_\ep}^2_{H^2}+\norm{\p_t u_\ep}^2_{H^1})^2+C\ep\norm{\p_tv}^2_{L^2}.
\end{align*}
By using the same arguments as that for $I_6$, we have
\[|II_4|\leq C(1+\norm{u_\ep}^2_{H^2}+\norm{\p_t u_\ep}^2_{H^1})^2+\norm{\p_tv}^2_{L^2}.\]

Since $\tau_v(u_\ep)=\frac{1}{1+\ep^2}(\ep\p_t u_\ep-u_\ep\times \p_t u_\ep)$, we can show
\begin{align*}
|II_1|=&|\int_{\Om}\<|\n u_\ep|^2\tau_v(u_\ep),\p_t u_\ep\>dx|=\frac{\ep}{1+\ep^2}\int_{\Om}|\n u_\ep|^2|\p_t u_\ep|^2dx\\
\leq &C\ep\norm{\n u_\ep}^2_{L^4}\norm{\p_t u_\ep}^2_{L^4}\leq C\ep(1+\norm{u_\ep}^2_{H^2}+\norm{\p_t u_\ep}^2_{H^1})^2.
\end{align*}
By applying same arguments as that for term $II_1$, we can get a bound of the term $II_2$
\[|II_2|=|\int_{\Om}\<2\<\n u_\ep,\tau_v(u_\ep)\>\cdot\n u_\ep,\p_t u_\ep\>dx|\leq C(1+\norm{u_\ep}^2_{H^2}+\norm{\p_t u_\ep}^2_{H^1})^2.\]
For terms $II_3$, there holds
\begin{align*}
|II_3|=&|\int_{\Om}\<\n_v\p_tu_\ep,\p_t u_\ep\>dx|\leq \norm{\n \p_tu_\ep}_{L^2}\norm{v}_{L^3}\norm{\p_t u_\ep}_{L^6}\\
\leq&C(1+\norm{\p_t u_\ep}^2_{H^1}+\norm{v}^2_{L^3})^2.
\end{align*}

By substituting the inequality \eqref{LHS} and the estimates for $\ep I_1$-$\ep I_6$ and $II_2$-$II_4$ into \eqref{eq-H2}, there holds
\begin{equation}\label{es-H2-new}
\begin{aligned}
&\frac{1}{2}\frac{\p}{\p t}\int_{\Om}|\p_t u_\ep|^2dx+\frac{1+\ep^2}{2}\frac{\p}{\p t}|\De u_\ep|^2dx+2\ep\int_{\Om}|\n \p_t u_\ep|^2dx\\
\leq &C(1+\norm{u_\ep}^2_{H^2}+\norm{\p_t u_\ep}^2_{H^1}+\norm{v}^2_{W^{1,3}})^3+C\norm{\p_t v}^2_{L^2}.
\end{aligned}
\end{equation}

\medskip
\subsection{Uniform $H^3$-estimate}
To get a uniform $H^3$- estimate of $u_\ep$, we need to improve the regularity of $\p_tu_\ep$ to guarantee that the following energy estimates make sense.

By Theorem \ref{mth1}, the solution $u_\ep$ satisfies
\begin{align*}
\p_t^iu_\ep\in C^0([0,T_\ep), H^{3-2i}{\Om})\cap L^2_{loc}([0,T_\ep), H^{4-2i}(\Om))
\end{align*}
for $i=0,1$. In particular, $\p_t u_\ep\in C^0([0,T_\ep), H^1(\Om))\cap L^2_{loc}([0,T_\ep), H^2(\Om))$ is a strong solution to the following linear equation
\begin{equation}\label{eq-u-ep-t}
\begin{cases}
\p_t \om-\ep\De \om-u_\ep\times \De \om=g,&(x,t)\in \Om\times [0,T_\ep),\\[1ex]
\frac{\p \om}{\p \nu}=0, &(x,t)	\in \p\Om\times [0,T_\ep),\\[1ex]
\om(x,0)=\ep\tau_{v_0}(u_0)+u_0\times \tau_{v_0}(u_0).
\end{cases}
\end{equation}
where $v_0=v(x,0)$, $g=F+L+K$ with
\begin{align*}
	F=&-\n_v\p_tu_\ep-\<\p_tu_\ep,\n_vu_\ep\>u_\ep+\ep u_\ep\times\n_v\p_tu_\ep
	+2\ep\<\n\p_tu_\ep,\n u_\ep\>u_\ep,\\
	L=&\p_tu_\ep\times(\De u_\ep+u_\ep \times\n_vu_\ep)+\ep|\n u_\ep|^2\p_tu_\ep+\ep\p_tu_\ep\times\n_vu_\ep,\\
	K=&-\n_{\p_tv}u_\ep+\ep u_\ep\times\n_{\p_tv}u_\ep,
\end{align*}
and 
\[\tau_{v_0}(u_0)=\ep(\De u_0+|\n u_0|^2u_0+u_0\times \n_{v_0} u_0)+u_0\times \De u_0-\n_{v_0}u_0.\]

On the other hand, under the assumption of $v$ in Theorem \ref{mth1}, that is,
\[v\in L^\infty(\Real^+,W^{1,3}(\Om))\cap C^0(\Real^+, H^1(\Om)),\quad \p_t v\in L^2(\Real^+, H^1(\Om)),\]
it is not difficult to show that
\[g\in L^2_{loc}([0,T_\ep), H^1(\Om)).\]

Then by applying the $L^2$-estimates of parabolic equation (also refer to Theorem A.1 in \cite{CW1}), the above estimate of $g$ implies that
\[\p_t u_\ep\in L^2_{loc}((0,T_\ep), H^3(\Om)),\quad \p^2_t u_\ep\in L^2_{loc}((0,T_\ep), H^1(\Om)).\]
Those regularities of $\p_t u_\ep$ can guarantee that the integration by parts in the following process of energy estimates makes sense.

Now, we demonstrate a uniform $H^3$-bound of $u_\ep$ with respect with $\ep$. Taking $-\De \p_t u_\ep$ as a test function to \eqref{Key1}:
\[\p_t\p_tu_\ep=\ep\De \p_tu_\ep+u_\ep\times\De\p_tu_\ep+F+L+K,\]
one can show
\begin{equation}\label{eq-H3}
\begin{aligned}
&\frac{1}{2}\frac{\p}{\p t}\int_{\Om}|\n \p_t u_\ep|^2dx+\ep\int_{\Om}|\De \p_tu_\ep|^2dx\\
=&-\int_{\Om}\<F+L+K, \De \p_t u_\ep\>dx\\
=&-\ep\int_{\Om}\<u_\ep\times\n_v\p_tu_\ep, \De \p_t u_\ep\>dx+\int_{\Om}\<\n_v\p_tu_\ep, \De \p_t u_\ep\>dx\\
&-\int_{\Om}\<\tilde{F}, \De \p_t u_\ep\>dx-\int_{\Om}\<L, \De \p_t u_\ep\>dx-\int_{\Om}\<K, \De \p_t u_\ep\>dx\\
=&M_1+M_2+M_3+M_4+M_5,
\end{aligned}
\end{equation}
where we denote
\[\tilde{F}=-\<\p_tu_\ep,\n_vu_\ep\>u_\ep+2\ep\<\n\p_tu_\ep,\n u_\ep\>u_\ep.\]

What follows is estimating the above five terms $M_1$-$M_5$ respectively.

\begin{align*}
|M_1|=&\ep|\int_{\Om}\<u_\ep\times\n_v\p_tu_\ep, \De \p_t u_\ep\>dx|\\
\leq &\frac{\ep}{8}\int_{\Om}|\De \p_tu_\ep|^2dx+C\ep\int_{\Om}|v|^2|\n \p_t u_\ep|^2dx\\
\leq &\frac{\ep}{8}\int_{\Om}|\De \p_tu_\ep|^2dx+C\ep\norm{v}^2_{L^\infty}\norm{\n \p_t u_\ep}^2_{L^2}\\
\leq &\frac{\ep}{8}\int_{\Om}|\De \p_tu_\ep|^2dx+C\ep\norm{\p_t u_\ep}^4_{H^1}+\norm{v}^4_{L^\infty}.
\end{align*}
\begin{align*}
|M_2|=&|\int_{\Om}\<\n_v\p_tu_\ep, \De \p_t u_\ep\>dx|=|\int_{\Om}\<\n(\n_v\p_tu_\ep), \n \p_t u_\ep\>dx|\\
\leq &C\int_{\Om}|\n v||\n\p_tu_\ep|^2dx+|\int_{\Om}v\cdot\<\n^2\p_tu_\ep, \n\p_t u_\ep\>dx|\\
\leq &C\norm{\n v}_{L^\infty}\int_{\Om}|\n\p_tu_\ep|^2dx\leq C\norm{\p_t u_\ep}^4_{H^1}+C\norm{\n v}^2_{L^\infty}.
\end{align*}
where we have applied the following fact
\[\int_{\Om}v\cdot\<\n^2\p_tu_\ep, \n\p_t u_\ep\>dx=\frac{1}{2}\int_{\Om}v\cdot\n|\n\p_t u_\ep|^2dx=0\]
since $\mbox{div}(v)=0$ and $\<v, \nu\>|_{\p\Om}=0$.

Due to $\<u_\ep, \p_t u_\ep\>=0$, we have
\[\<u_\ep,\De \p_t u_\ep\>=-\<\De u_\ep,\p_t u_\ep\>-2\<\n u_\ep,\n \p_t u_\ep\>.\]

Then, a simple calculation shows
\begin{align*}
|M_3|\leq& |\int_{\Om}\<\p_tu_\ep,\n_vu_\ep\>\<u_\ep,\De \p_t u_\ep\>dx|+|\int_{\Om}2\ep\<\n\p_tu_\ep,\n u_\ep\>\<u_\ep,\De \p_t u_\ep\>dx|\\
=&a+b,
\end{align*}
where
\begin{align*}
|a|\leq &\int_{\Om}|v||\p_t u_\ep||\n u_\ep|(|\p_t u_\ep||\De u_\ep|+2|\n \p_t u_\ep||\n u_\ep|)dx\\
\leq &C\norm{v}_{L^3}\norm{\p_t u_\ep}^2_{L^6}\norm{\n u_\ep}_{L^6}\norm{\De u_\ep}_{L^6}\\
&+C\norm{v}_{L^6}\norm{\n \p_t u_\ep}_{L^2}\norm{\p_t u_\ep}_{L^6}\norm{\n u_\ep}_{L^6}\norm{\n u_\ep}_{L^\infty}\\
\leq& C\norm{v}_{W^{1,3}}\norm{\p_t u_\ep}^2_{H^1}\norm{u_\ep}_{H^2}\norm{u_\ep}_{H^3}\\
\leq &C(1+\norm{u_\ep}^2_{L^2}+\norm{\p_t u_\ep}^2_{H^1}+\norm{v}^2_{W^{1,3}})^4
\end{align*}
and
\begin{align*}
b=&|\int_{\Om}2\ep\<\n\p_tu_\ep,\n u_\ep\>\<u_\ep,\De \p_t u_\ep\>dx|\\
\leq& \frac{\ep}{8}\int_{\Om}|\De \p_tu_\ep|^2dx+C\ep\norm{\n u_\ep}^2_{L^\infty}\int_{\Om}|\n\p_t u_\ep|^2dx\\
\leq &\frac{\ep}{8}\int_{\Om}|\De \p_tu_\ep|^2dx+C\ep(1+\norm{u_\ep}^2_{L^2}+\norm{\p_t u_\ep}^2_{H^1}+\norm{v}^2_{W^{1,3}})^4.
\end{align*}

Next, we estimate $M_4$ as follows.

\begin{align*}
|M_4|=&|\int_{\Om}\<L, \De \p_t u_\ep\>dx|\\
=&|\int_{\Om}\<\p_tu_\ep\times(\De u_\ep+u_\ep \times\n_vu_\ep)+\ep|\n u_\ep|^2\p_tu_\ep+\ep\p_tu_\ep\times\n_vu_\ep,\De \p_t u_\ep\>dx|\\
=&K_1+K_2+K_3.
\end{align*}
Then to get a bound of $M_4$, we need to estimate $K_1$-$K_3$ step by steps. 

By applying the equation 
\[\tau_v(u_\ep)=\De u_\ep+|\n u_\ep|^2u_\ep+u_\ep\times \n_v u_\ep=\frac{1}{1+\ep^2}(\ep \p_t u_\ep-u_\ep\times \p_t u_\ep),\]
we can show 
\begin{align*}
|K_1|=&|\int_{\Om}\<\p_tu_\ep\times(\De u_\ep+u_\ep \times\n_vu_\ep),\De \p_t u_\ep\>dx|\\
\leq &\frac{1}{1+\ep^2}|\int_{\Om}\<\p_t u_\ep\times(\ep\p_t u_\ep-u_\ep\times \p_t u_\ep),\De \p_t u_\ep\>dx|\\
&+|\int_{\Om}\<\p_t u_\ep\times (|\n u_\ep|^2u_\ep),\De \p_t u_\ep\>dx|=c+d.
\end{align*}

This two terms $c$ and $d$ can be estimated as follows. Since
\begin{align*}
	\p_t u_\ep\times (u_\ep\times \n \p_t u_\ep)=&\<\p_t u_\ep,\n \p_t u_\ep\>u_\ep-\<\p_t u_\ep, u_\ep\>\n \p_t u_\ep\\
	=&\<\p_t u_\ep,\n \p_t u_\ep\>u_\ep,\\
\<u_\ep,\n \p_tu_\ep\>=&-\<\n u_\ep, \p_t u_\ep\>,
\end{align*}
then we have
\[\<\p_t u_\ep\times (u_\ep\times \n \p_t u_\ep),\n \p_t u_\ep\>=-\<\p_t u_\ep,\n \p_t u_\ep\>\<\n u_\ep, \p_t u_\ep\>.\]
Consequently, the term $c$ can be bounded as follows.

\begin{align*}
|c|=&\frac{1}{1+\ep^2}|\int_{\Om}\<\p_t u_\ep\times(\ep\p_t u_\ep-u_\ep\times \p_t u_\ep),\De \p_t u_\ep\>dx|\\
=&\frac{1}{1+\ep^2}|\int_{\Om}\<\p_t u_\ep\times\n (u_\ep\times \p_t u_\ep),\n \p_t u_\ep\>dx|\\
\leq &\int_{\Om}|\p_t u_\ep|^2|\n u_\ep||\n \p_t u_\ep|dx+\frac{1}{1+\ep^2}|\int_{\Om}\<\p_t u_\ep\times (u_\ep\times \n \p_t u_\ep),\n \p_t u_\ep\>dx|\\
\leq &C\int_{\Om}|\p_t u_\ep|^2|\n u_\ep||\n \p_t u_\ep|dx\leq C\norm{\n \p_t u_\ep}_{L^2}\norm{\n u_\ep}_{L^6}\norm{\p_t u_\ep}^2_{L^6}\\
\leq &C(1+\norm{u_\ep}^2_{H^2}+\norm{\p_tu_\ep}^2_{H^1})^2.
\end{align*}

On the other hand, a simple calculation shows that
\begin{align*}
|d|=&|\int_{\Om}\<\p_t u_\ep\times (|\n u_\ep|^2u_\ep),\De \p_t u_\ep\>dx|\\
=&|\int_{\Om}\<\p_t u_\ep\times \n(|\n u_\ep|^2u_\ep),\n \p_t u_\ep\>dx|\\
\leq &C\int_{\Om}|\p_t u_\ep||\n \p_t u_\ep|(|\n u_\ep|^3+|\n u_\ep||\n^2 u_\ep|)dx\\
\leq &C\norm{\n \p_t u_\ep}_{L^2}\norm{\p_t u_\ep}_{L^6}(\norm{\n u_\ep}_{L^\infty}\norm{\n u_\ep}^2_{L^6}+\norm{\n^2 u_\ep}_{L^6}\norm{\n u_\ep}_{L^6})\\
\leq &C(1+\norm{u_\ep}^2_{H^2}+\norm{\p_t u_\ep}^2_{H^1})^4.
\end{align*}
The above estimates for $c$ and $d$ lead to an upper bound of $K_1$
\[|K_1|\leq C(1+\norm{u_\ep}^2_{H^2}+\norm{\p_t u_\ep}^2_{H^1})^4.\]

The term $K_2$-$K_3$ can be estimated directly as follows.

\begin{align*}
|K_2|=&|\int_{\Om}\<\ep|\n u_\ep|^2\p_tu_\ep,\De \p_t u_\ep\>dx|\\
\leq& \frac{\ep}{8}\int_{\Om}|\De \p_t u_\ep|^2dx+C\ep\norm{\n u_\ep}^4_{L^6}\norm{\p_t u_\ep}^2_{L^6}\\
\leq &\frac{\ep}{8}\int_{\Om}|\De \p_t u_\ep|^2dx+C\ep(1+\norm{u_\ep}^2_{H^2}+\norm{\p_t u_\ep}^2_{H^1})^3.\\
|K_3|=&|\int_{\Om}\<\ep\p_tu_\ep\times\n_vu_\ep,\De \p_t u_\ep\>dx|\\
\leq &\frac{\ep}{8}\int_{\Om}|\De \p_t u_\ep|^2dx+C\ep\norm{v}^2_{L^6}\norm{\p_t u_\ep}^2_{L^6}\norm{\n u_\ep}^2_{L^6}\\
\leq &\frac{\ep}{8}\int_{\Om}|\De \p_t u_\ep|^2dx+C\ep(1+\norm{u_\ep}^2_{H^2}+\norm{\p_t u_\ep}^2_{H^1}+\norm{v}^2_{H^1})^3.
\end{align*}

Consequently, we can combine the above estimates for $K_1$-$K_3$ to show that
\[|M_4|\leq \frac{\ep}{4}\int_{\Om}|\De \p_t u_\ep|^2dx+C(1+\norm{u_\ep}^2_{H^2}+\norm{\p_t u_\ep}^2_{H^1}+\norm{v}^2_{H^1})^4.\]

It remains to estimate the term $M_5$. A simple calculation shows
\begin{align*}
|M_5|=&|\int_{\Om}\<K, \De \p_t u_\ep\>dx|\\
=&|\int_{\Om}\<-\n_{\p_tv}u_\ep+\ep u_\ep\times\n_{\p_tv}u_\ep, \De \p_t u_\ep\>dx|\\
\leq &|\int_{\Om}\<\n_{\p_tv}u_\ep, \De \p_t u_\ep\>dx|+|\int_{\Om}\<\ep u_\ep\times\n_{\p_tv}u_\ep, \De \p_t u_\ep\>dx|\\
=&e+h,
\end{align*}
where
\begin{align*}
e=&|\int_{\Om}\<\n_{\p_tv}u_\ep, \De \p_t u_\ep\>dx|=|\int_{\Om}\<\n(\n_{\p_tv}u_\ep), \n \p_t u_\ep\>dx|\\
\leq &\int_{\Om}|\n \p_t v||\n u_\ep||\n \p_t u_\ep|+|\p_t v||\n^2 u_\ep||\n\p_t u_\ep|dx\\
\leq &C\norm{\n \p_t v}_{L^2}\norm{\n \p_t u_\ep}_{L^2}\norm{\n u_\ep}_{L^\infty}+C\norm{\n^2u_\ep}_{L^6}\norm{\p_t v}_{L^3}\norm{\n \p_t u_\ep}\\
\leq &C(1+\norm{u_\ep}^2_{H^2}+\norm{\p_t u_\ep}^2_{H^1}+\norm{v}^2_{W^{1,3}})^4+C\norm{\p_tv}^2_{H^1},
\end{align*}
and
\begin{align*}
h=&|\int_{\Om}\<\ep u_\ep\times\n_{\p_tv}u_\ep, \De \p_t u_\ep\>dx|=\ep|\int_{\Om}\<\n( u_\ep\times\n_{\p_tv}u_\ep), \n \p_t u_\ep\>dx|\\
\leq &\ep\int_{\Om}|\n u_\ep|^2|\p_t v||\n \p_t u_\ep|dx+\ep|\int_{\Om}\< u_\ep\times\n(\n_{\p_tv}u_\ep), \n \p_t u_\ep\>dx|\\
\leq &C\ep(1+\norm{u_\ep}^2_{H^2}+\norm{\p_t u_\ep}^2_{H^1}+\norm{v}^2_{W^{1,3}})^4+C\ep\norm{\p_tv}^2_{H^1}.
\end{align*}

Therefore, by substituting the estimates of $M_1$-$M_5$ into \eqref{eq-H3}, we have
\begin{equation}\label{es-H3}
\begin{aligned}
&\frac{\p}{\p t}\int_{\Om}|\n \p_t u_\ep|^2dx+\ep\int_{\Om}|\De \p_tu_\ep|^2dx\\
\leq &C(1+\norm{u_\ep}^2_{H^2}+\norm{\p_t u_\ep}^2_{H^1}+\norm{v}^2_{W^{1,3}})^4+C(\norm{\p_t v}^2_{H^1}+\norm{v}^4_{L^\infty}+\norm{\n v}^2_{L^\infty}).
\end{aligned}
\end{equation}

Finally, we get a key uniform $H^3$-estimates for $u_\ep$ by combining the above inequalities \eqref{es-H1-new},\eqref{es-H2-new} with \eqref{es-H3}. We state this result as the following proposition. For the sake of convenience, we denote
\[G(u_\ep)=(1+\ep^2)\norm{u_\ep}^2_{H^2}+\norm{\p_tu_\ep}^2_{H^1}+1\]
and
\[g=\norm{\p_t v}^2_{H^1}+\norm{v}^4_{L^\infty}+\norm{\n v}^2_{L^\infty}.\]
\begin{prop}\label{Key-es}
Let $T_\ep$ and the solution $u_\ep$ be the same as that in Theorem \eqref{mth1}. Suppose that $v\in L^\infty(\Real^+,W^{1,3}(\Om))\cap C^0(\Real^+, H^1(\Om))\cap L^4(\Real^+, L^\infty(\Om))$, $\n v\in L^2(\Real^+, L^\infty(\Om))$, $\p_t v\in L^2(\Real^+, H^1(\Om))$, $\textnormal{\mbox{div}}(v)=0$ inside $\Om$ for any $t\in\Real^+$ and $\<v,\nu\>|_{\p\Om\times \Real^+}=0$. Then there holds
\begin{align*}
\frac{\p}{\p t}G(u_\ep)\leq C(G(u_\ep)+\norm{v}^2_{W^{1,3}})^4+Cg,
\end{align*}
for $0<t<T_\ep$.
	
Moreover, this inequality implies that there exists two constants $T_0$ and $C(T_0)$ depending only on $\norm{u_0}_{H^3(\Om)}$, $\norm{v}_{L^\infty(\Real^+,W^{1,3})}$ and $g$ such that
\[\sup_{0<t< \min \{T_0, T_\ep\}}(\norm{u_\ep}^2_{H^3}+\norm{\p_t u_\ep}^2_{H^1})\leq C(T_0).\]
\end{prop}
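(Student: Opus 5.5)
The plan is to sum the three differential inequalities already obtained: \eqref{es-H1-new} for $\norm{\n u_\ep}^2_{L^2}$, \eqref{es-H2-new} for $\frac12\norm{\p_tu_\ep}^2_{L^2}+\frac{1+\ep^2}{2}\norm{\De u_\ep}^2_{L^2}$, and \eqref{es-H3} for $\norm{\n\p_tu_\ep}^2_{L^2}$. We also use the conservation $\frac{d}{dt}\norm{u_\ep}^2_{L^2}=0$, which is anyway trivial since $|u_\ep|=1$. By Lemma \ref{eq-norm}, $\norm{u_\ep}^2_{H^2}$ is equivalent to $\norm{u_\ep}^2_{L^2}+\norm{\De u_\ep}^2_{L^2}$, and $\norm{\n u_\ep}^2_{L^2}$ is controlled by it. So, after redefining $G(u_\ep)$ up to harmless constants as the sum of the quantities being differentiated, its time derivative is bounded by the sum of the right-hand sides.

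On those right-hand sides, the $H^1$-estimate contributes $C(\ep\norm{v}^2_{L^3}+\norm{\n v}_{L^3})\norm{u_\ep}^2_{H^2}\le C(G+\norm{v}^2_{W^{1,3}})^2$. The $H^2$-estimate contributes $C(G+\norm{v}^2_{W^{1,3}})^3+C\norm{\p_tv}^2_{L^2}$. The $H^3$-estimate contributes $C(G+\norm{v}^2_{W^{1,3}})^4+Cg$. All powers are dominated by the fourth power because the base is at least $1$. The positive dissipative terms $\ep\norm{u_\ep\times\De u_\ep}^2$, $2\ep\norm{\n\p_tu_\ep}^2$ and $\ep\norm{\De\p_tu_\ep}^2$ on the left-hand sides are simply dropped. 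This gives the first claim
\[
\frac{d}{dt}G(u_\ep)\le C(G(u_\ep)+\norm{v}^2_{W^{1,3}})^4+Cg .
\]
The constant $C$ is independent of $\ep\in(0,1)$, since every estimate above used only $\ep\le1$.

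For the second claim, I set $V=\norm{v}^2_{L^\infty(\Real^+,W^{1,3})}$ and $y(t)=G(u_\ep)(t)$. Integrating in time gives $y(t)\le y(0)+C\int_0^tg\,ds+\int_0^tf(y)\,ds$ with $f(y)=C(y+V)^4$. This $f$ is nondecreasing, continuous and positive, and $\int_1^\infty f^{-1}<\infty$. Moreover $g\in L^1(\Real^+)$ by the hypotheses $\p_tv\in L^2H^1$, $v\in L^4L^\infty$ and $\n v\in L^2L^\infty$. The regularity in Theorem \ref{mth1} makes $y$ continuous on $[0,T_\ep)$. Lemma \ref{Gron-inq} then yields $T^*$ and a bound $\sup_{t\le T}y\le C(T,y(0))$ for $T<T^*$, with $T^*$ depending only on $y(0)$, $V$ and $\norm{g}_{L^1}$; I take $T_0<T^*$.

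It remains to bound $y(0)$ independently of $\ep$. The $\norm{u_0}^2_{H^2}$ part is immediate. For $\p_tu_\ep(0)=\ep\tau_{v_0}(u_0)+u_0\times\tau_{v_0}(u_0)$, I would bound $\norm{\tau_{v_0}(u_0)}_{H^1}$ by $C(\norm{u_0}_{H^3}+\norm{u_0}^3_{H^3}+\norm{v_0}_{W^{1,3}}\norm{u_0}_{H^3}^2)$. This uses $H^2\hookrightarrow L^\infty$ and $v_0\in H^1\hookrightarrow L^6$ together with $\n v_0\in L^3$, where $v_0=v(\cdot,0)$ is meaningful since $v\in C^0(\Real^+,H^1)$.

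Finally, Lemma \ref{Equ-norm}(2) converts the bound on $G$ into $\norm{u_\ep}^2_{H^3}\le C(1+G+V)^3$ on $(0,\min\{T_0,T_\ep\})$, which gives the stated estimate with $C(T_0)$ depending only on $\norm{u_0}_{H^3}$, $V$ and $g$. The main obstacle is really the bookkeeping that all constants and exponents are uniform in $\ep$; in particular that the $\ep^2$-weighted $\De u_\ep$ term on the left of \eqref{es-H2-new} is matched by the $(1+\ep^2)$ weight in $G$, so nothing degenerates as $\ep\to0$.
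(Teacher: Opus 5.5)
Your proposal is correct and follows essentially the same route as the paper. You sum \eqref{es-H1-new}, \eqref{es-H2-new} and \eqref{es-H3}, bound $G(u_\ep)(0)$ uniformly in $\ep$ using $v\in C^0(\Real^+,H^1)$, apply Lemma \ref{Gron-inq}, and pass to the $H^3$ bound via Lemma \ref{Equ-norm}(2), and working with the sum of the squared quantities actually being differentiated (an $\ep$-uniformly equivalent version of $G(u_\ep)$) is, if anything, slightly more careful than the paper's formulation.
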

\begin{proof}
Since $u_\ep\in C^0([0,T_\ep), H^3(\Om))$ and $\p_t u_{\ep}\in C^0([0,T_\ep), H^1(\Om))$, we have
\[G(u_\ep)\in C^0[0,T_\ep).\]
Hence it is not difficult to show 
\[\norm{G(u_\ep)(0)}\leq C(\norm{u_0}^2_{H^3}+\norm{v(\cdot, 0)}^2_{H^1}+1)^3,\]
where we have applied the fact $v\in C^0(\Real^+, H^1(\Om)).$	
	
Then, by combining inequalities \eqref{es-H1-new}, \eqref{es-H2-new} and \eqref{es-H3}, we can show that $G(u_\ep)$ satisfies
\begin{equation*}
\begin{cases}
\frac{\p}{\p t}G(u_\ep)\leq C(G(u_\ep)+\norm{v}^2_{W^{1,3}})^4+Cg, \quad &t\in [0, \min\{T_0,T_\ep\}),\\[1ex]
G(u_\ep)(0)\leq C(\norm{u_0}^2_{H^3}+\norm{v(\cdot, 0)}^2_{H^1}+1)^3.
\end{cases}
\end{equation*}
	
Since $v\in L^\infty(\Real^+, W^{1,3}(\Om))$ and $g\in L^1(\Real^+)$, Lemma \ref{Gron-inq} implies that there exists two constants $T_0$ and $C(T_0)$ depending only on $\norm{u_0}_{H^3(\Om)}$, $\norm{v}_{L^\infty(\Real^+,W^{1,3})}$ and $g$ such that
\[\sup_{0\leq t< \min \{T_0, T_\ep\}}G(u_\ep)\leq C(T_0).\]
	
Therefore, the proof is completed.
\end{proof}
\subsection{Local regular solutions of the incompressible Schr\"odinger flow}\label{s: proof-main-thm1}
In this part, we show our main result on the existence of local regular solutions to \eqref{eq-ISMF}, i.e. Theorem \ref{mth2}. We will only provide a brief outline of the proof for Theorem \ref{mth2}, since the arguments are almost identical to those used in the proof of the existence of local regular solutions to the Schr\"odinger flow into $\U^2$ in \cite{CW1}, once we have obtained a uniform $H^3$-estimates of the approximation solutions $u_\ep$.

\medskip
\begin{proof}[\bf{The proof of Theorem \ref{mth2}}]
Our proof is divided into two steps.

\medskip
\noindent\emph{Step 1: A uniform lower bound of $T_\ep$.}\

Without lose of generality, we assume that $T^\prime_\ep$ is the maximal value such that the estimates \eqref{es-app-sol} hold with $T^\prime_\ep=T_\ep$. Then, we claim that $T^\prime_\ep>T_0$.

On the contrary, if $T_0<T^\prime_\ep$, then Proposition \ref{Key-es} implies that the solution $u_\ep$ satisfies
\[\sup_{0\leq t< T_\ep}(\norm{u_\ep}^2_{H^3}+\norm{\p_t u_\ep}^2_{H^1})\leq C(T_0).\]
By applying similar arguments as that in \cite{CW1}, we can utilize the above estimate for $u_\ep$ to show that the estimates \eqref{es-app-sol} hold true for any $0<T\leq T^\prime_\ep$, which leads to a contradiction with the definition of $T^\prime_\ep$.

\medskip
\noindent\emph{Step 2: Local regular solutions of the incompressible Schr\"odinger flow.}\

The result in step 1 tells us that
\[\sup_{0\leq t\leq T_0}(\norm{u_\ep}^2_{H^3}+\norm{\p_t u_\ep}^2_{H^1})\leq C(T_0),\]
where $0<T_0<T^\prime_\ep$.

Consequently, with the above uniform $H^3$- estimate for $u_\ep$ at hand, we then apply Lemma \ref{A-S} to demonstrate that there exists a subsequence of $\{u_\ep\}$ such that which converges to a local regular solution $u$ to the problem \eqref{eq-ISMF} as $\ep\to 0$. Moreover, this local regular solution $u$ satisfies the estimate \eqref{es-mth1}.
\end{proof}

\medskip\medskip
\noindent {\it\bf{Acknowledgements}}: The author B. Chen is supported partially by NSFC (Grant No. 12301074) and Guangzhou Basic and Applied Basic Research Foundation (Grant No. 2024A04J3637), the author Y.D. Wang is supported partially by NSFC (Grant No.11971400) and National key Research and Development projects of China (Grant No. 2020YFA0712500).

\medskip
\section*{Statements and Declarations}
\noindent {\it\bf{Competing interests}}: The authors declare that no conflict of interest exists in this article.

\end{document}